\documentclass[reqno]{amsart}

\usepackage{amsmath,amssymb,amscd,amsthm,amsxtra,latexsym,dsfont}

\newcommand{\T}{\mathbb{T}}
  \newcommand{\med}{{\mathrm{med}}}
\newcommand{\supp}{{\mathrm{supp}}}

\makeatletter
\DeclareRobustCommand\widecheck[1]{{\mathpalette\@widecheck{#1}}}
\def\@widecheck#1#2{%
  \box\z@\hbox{\m@th$#1#2$}%
  \box\tw@\hbox{\m@th$#1%

    \widehat{%
      \vrule\@width\z@\@height\ht\z@
      \vrule\@height\z@\@width\wd\z@}$}%
  \dp\tw@-\ht\z@ \@tempdima\ht\z@ \advance\@tempdima2\ht\tw@
  \divide\@tempdima\thr@@ \box\tw@\hbox{%

    \raise\@tempdima\hbox{\scalebox{1}[-1]{\lower\@tempdima\box\tw@}}}%
  {\ooalign{\box\tw@ \cr \box\z@}}} \makeatother

\newtheorem{theorem}{Theorem}[section]
\newtheorem{maintheorem}{Theorem} \newtheorem{lemma}[theorem]{Lemma}
\newtheorem{proposition}[theorem]{Proposition}
\newtheorem{remark}[theorem]{Remark} 

\newtheorem{definition}[theorem]{Definition}
\newtheorem{corollary}[theorem]{Corollary}

\title{The Gross-Pitaevskii hierarchy on general rectangular tori}
\author[S.~Herr]{Sebastian Herr} \address{Universit\"{a}t Bielefeld,
  Fakult\"{a}t f\"{u}r Mathematik, Postfach 10 01 31, D-33501
  Bielefeld, Germany} \email{herr@math.uni-bielefeld.de}

\author[V.~Sohinger]{Vedran Sohinger} \address{Eidgen\"{o}ssische
  Technische Hochschule Z\"{u}rich, Departement Mathematik,
  R\"{a}mistrasse 101, 8092 Z\"{u}rich, Switzerland}
\email{vedran.sohinger@math.ethz.ch}

\keywords{Gross-Pitaevskii hierarchy, Nonlinear Schr\"{o}dinger
  equation, BBGKY hierarchy, Bose-Einstein condensation, Irrational
  torus} \subjclass[2010]{35Q55,70E55}

\begin{document}

\begin{abstract}
  In this work, we study the Gross-Pitaevskii hierarchy on general
  --rational and irrational-- rectangular tori of dimension two and
  three. This is a system of infinitely many linear partial
  differential equations which arises in the rigorous derivation of
  the nonlinear Schr\"{o}dinger equation. We prove a conditional
  uniqueness result for the hierarchy.  In two dimensions, this result
  allows us to obtain a rigorous derivation of the defocusing cubic
  nonlinear Schr\"{o}dinger equation from the dynamics of many-body
  quantum systems. On irrational tori, this question was posed as an
  open problem in previous work of Kirkpatrick, Schlein, and
  Staffilani.
\end{abstract}

\thanks{This work has been partially supported by the German Research
  Foundation, CRC 701.}

\maketitle

\section{Introduction}
\subsection{Setup of the problem}
Let $d \geq 2$ be fixed. Suppose that $\theta_1,\theta_2, \ldots,
\theta_d>0$ are given parameters.  We consider the following domain:
\begin{equation}
  \notag
  \Lambda_d=\Lambda_d\,(\theta_1,\theta_2,\ldots,\theta_d):=(\mathbb{R}\, \big/\,\frac{2\pi}{\theta_1}\mathbb{Z}) \times (\mathbb{R} \,\big/\,\frac{2\pi}{\theta_2} \mathbb{Z}) \times \cdots \times (\mathbb{R} \,\big/\,\frac{2\pi}{\theta_d} \mathbb{Z}).
\end{equation}
For the purpose of this paper we call $\Lambda_d$ a \emph{general
  (rectangular d-dimensional) torus}.

In the context of nonlinear dispersive equations, general rectangular
tori were first studied in the work of Bourgain \cite{B07}, where it
was noted that the number-theoretic methods employed in the case of
the classical $d$-dimensional torus $\mathbb{T}^d=(\mathbb{R}\,
\big/\, 2\pi \mathbb{Z})^d$, i.e.\
$\theta_1=\theta_2=\cdots=\theta_d=1$, cannot be used in general.  As
a result, proving dispersive estimates is more challenging on general
tori.

In this work, we will consider the \emph{Gross-Pitaevskii hierarchy on
  the spatial domain $\Lambda_d$}. We recall that this is a system of
infinitely many equations, which is given by:
\begin{equation}
  \label{GP1}
  \begin{cases}
    i \partial_t \gamma^{(k)}+(\Delta_{\vec{x}_k}-\Delta_{\vec{x}'_k}) \gamma^{(k)}=b_0 \cdot \sum_{j=1}^{k}B_{j,k+1}(\gamma^{(k+1)})\\
    \gamma^{(k)} |_{t=0}=\gamma^{(k)}_0.
  \end{cases}
\end{equation}
We use similar notation as in \cite{GSS,VS2}: For fixed $k \in
\mathbb{N}$, $\gamma_0^{(k)}$ is a complex-valued function on
$\Lambda_d^k \times \Lambda_d^k$. Such a function is in general called
\emph{a density matrix of order $k$ on $\Lambda_d$}. Furthermore, each
$\gamma^{(k)}=\gamma^{(k)}(t)$ is a time-dependent density matrix of
order $k$ on $\Lambda_d$. If we denote by
$(\vec{x}_k;\vec{x}'_k)=(x_1,\ldots,x_k;x'_1,\ldots,x'_k)$ the spatial
variable of $\Lambda_d^k \times \Lambda_d^k$, then
$\Delta_{\vec{x}_k}:=\sum_{j=1}^{k} \Delta_{x_j}$ is the Laplace
operator in the first component of $\Lambda_d^k$ and
$\Delta_{\vec{x}'_k}:=\sum_{j=1}^{k} \Delta_{x'_j}$ is the Laplace
operator in the second component of $\Lambda_d^k$. Given $j \in
\{1,2,\ldots,k\}$, $B_{j,k+1}$ denotes the \emph{collision operator},
which is defined as $B_{j,k+1}(\sigma^{(k+1)}):=\,Tr_{x_{k+1}}
\big[\delta(x_j-x_{k+1}),\sigma^{(k+1)}\big]$, whenever $\sigma^{(k+1)}$ is a density matrix of order $k+1$ on
$\Lambda_d$. Here, $\delta$ denotes the Dirac delta function and
$Tr_{x_{k+1}}$ denotes the trace in the $x_{k+1}$ variable, which we sometimes also denote as $Tr_{k+1}$, for simplicity of notation.  A more
detailed definition of the collision operator is given in
\eqref{Bjk+1} below. Finally, $b_0 \in \mathbb{R}$ is a non-zero
coupling constant. If $b_0>0$, we say that the problem \eqref{GP1} is
\emph{defocusing}, and if $b_0<0$, we say that it is \emph{focusing}.

The problem \eqref{GP1} is closely linked to the \emph{cubic nonlinear
  Schr\"{o}dinger equation (NLS) on $\Lambda_d$}:
\begin{equation}
  \label{NLS}
  \begin{cases}
    i \partial_t \phi_t + \Delta \phi_t = b_0 \cdot |\phi_t|^2 \phi_t\\
    \phi_t \big|_{t=0}=\phi
  \end{cases}
\end{equation} 
The coupling constant $b_0$ is the same as in \eqref{GP1}.  From a
solution $\phi_t$ to \eqref{NLS} we can construct a solution of
\eqref{GP1} with initial data $\gamma_0^{(k)}=|\phi \rangle \langle
\phi|^{\otimes k}$. This is the solution given by:
\begin{equation*}
  \gamma^{(k)}(t):=|\phi_t \rangle \langle \phi_t|^{\otimes k}.
\end{equation*} 
Here, $|\cdot \rangle \langle \cdot|$ denotes the Dirac bracket, which
is given by $|h \rangle \langle g|(x;x'):=h(x) \cdot
\overline{g(x')}.$ The sequence $(\gamma^{(k)}(t))_k$ is then called a
\emph{factorized solution of \eqref{GP1}}.

The factorized solutions of \eqref{GP1} play a key role in the
rigorous derivation of the NLS from the dynamics of many-body quantum
systems. More precisely, let us first start from a real-valued
potential $V$, which is defined on $\Lambda_d$. Given $N \in
\mathbb{N}$, we build the corresponding $N$-body Hamiltonian $H_N$ on
a dense subspace of $L^2_{sym}(\Lambda_d^N)$, which is the space of
all permutation-symmetric elements of $L^2(\Lambda^N)$. The operator
$H_N$ is given by:
\[
H_N:=-\sum_{j=1}^{N}\Delta_{x_j}+\frac{1}{N}\sum_{\ell<j}^{N}V_N(x_{\ell}-x_j).
\]
Here $V_N(x):=N^{3\beta}V(N^{\beta}x)$, for $\beta>0$ a
parameter. Given initial data $\Psi_{N,0}$, we can solve the $N$-body
Schr\"{o}dinger equation associated to $H_N$:
\begin{equation*}
  \begin{cases}
    i \partial_t \Psi_{N,t}=H_N \Psi_{N,t} \\
    \Psi_{N,t}\big|_{t=0}=\Psi_{N,0}.
  \end{cases}
\end{equation*}
From the solution, we can define:
\begin{equation}
  \label{gammakNt}
  \gamma^{(k)}_{N,t}:=Tr_{k+1,\ldots,N} \,\big|\Psi_{N,t} \rangle \langle \Psi_{N,t}\big|.
\end{equation}
Here, $Tr_{k+1,\ldots,N}$ denotes the partial trace in the
$x_{k+1},\ldots,x_N$ variables. By definition, for $k>N$, one takes
$\gamma^{(k)}_{N,t}:=0$.

The constructed sequence $(\gamma^{(k)}_{N,t})_k$ solves the
\emph{Bogoliubov-Born-Green-Kirkwood-Yvon (BBGKY) hierarchy on
  $\Lambda_d$}:
\begin{align*}
  & i \partial_t \gamma^{(k)}_{N,t} + \big(\Delta_{\vec{x}_k}-\Delta_{\vec{x}'_k}\big) \gamma^{(k)}_{N,t}\\
  =&\frac{1}{N} \sum_{\ell<j}^{k}
  \big[V_{N}(x_{\ell}-x_j),\gamma^{(k)}_{N,t}\big] +
  \frac{N-k}{N}\sum_{j=1}^{k} Tr_{k+1}
  \big[V_{N}(x_j-x_{k+1}),\gamma^{(k+1)}_{N,t}\big].
\end{align*}

Formally speaking, the BBGKY hierarchy converges to the GP hierarchy
with $b_0=\int_{\Lambda_d} V(x)\,dx$ as $N \rightarrow \infty$. In
order to make this formal argument rigorous, one wants to choose
$(\Psi_N)_N$ in an appropriate way in terms of $\phi$ and show that
there exists a sequence $N_j \rightarrow \infty$, which does not
depend on $k \in \mathbb{N}$ and $t$ with the property that:
\begin{equation}
  \label{convergence}
  Tr\,\Big|\gamma^{(k)}_{N_j,t}-|\phi_t \rangle \langle \phi_t|^{\otimes k} \Big| \rightarrow 0
\end{equation}
as $j \rightarrow \infty$, for $t$ belonging to a finite time
interval. Here, $Tr \big| \cdot \big|$ denotes the trace norm. We
refer to \eqref{convergence} as \emph{a rigorous derivation of the
  cubic NLS on $\Lambda_d$ from the dynamics of many-body quantum
  systems}.

\subsection{Statement of the results}
In our paper, we will primarily study the case $d=2$ and $d=3$,
i.e. the setting of two- and three-dimensional general rectangular
tori.  The following class of time-dependent density matrices is
defined on $\Lambda_d$:
\begin{definition}
  \label{mathcalAtilde}
  Given $\alpha \in \mathbb{R}$, let
  $\widetilde{\mathcal{A}}=\widetilde{A}(\alpha)$ denote the class of
  all time-dependent sequences
  $\widetilde{\Gamma}(t)=(\widetilde{\gamma}^{(k)}(t))$, where each
  $\widetilde{\gamma}^{(k)}: \mathbb{R}_t \times \Lambda_d^k \times
  \Lambda_d^k \rightarrow \mathbb{C}$ satisfies:
  \begin{itemize}
  \item[i)] For all $t \in \mathbb{R},x_1,\ldots,x_k,x'_1, \ldots,x'_k
    \in \Lambda_d$, and for all $\sigma \in
    S^k$: \[\widetilde{\gamma}^{(k)}(t,x_{\sigma(1)},\ldots,x_{\sigma(k)};x_{\sigma(1)}',\ldots,x_{\sigma(k)}')=\widetilde{\gamma}^{(k)}(t,x_1,\ldots,x_k;x_1',\ldots,x_k').\]
  \item[ii)] There exist positive and continuous functions
    $\widetilde{f},\widetilde{g} : \mathbb{R} \rightarrow \mathbb{R}$,
    which are independent of $k$, such that for all $t \in \mathbb{R}$
    and for all $j \in \{1,2,\ldots,k\}$:
    \begin{equation*}
      \int_{t-\widetilde{g}(t)}^{t+\widetilde{g}(t)} \|S^{(k,\alpha)}B_{j,k+1}(\widetilde{\gamma}^{(k+1)})(s)\|_{L^2(\Lambda_d^k \times \Lambda_d^k)} ds \leq \widetilde{f}^{\,k+1}(t).
    \end{equation*}
  \end{itemize}
\end{definition}
Here $S^{(k,\alpha)}$ denotes the operator of fractional
differentiation of order $\alpha$ on density matrices of order $k$, as
is defined in \eqref{Skalpha} below. The class
$\widetilde{\mathcal{A}}$ corresponds to the a priori bound needed in
the analysis of \cite{KM}. The time dependence of the parameters
$\widetilde{f}(t)$ and $\widetilde{g}(t)$ was subsequently introduced
in \cite{GSS}.

In the two-dimensional case, we will prove the following conditional
uniqueness result:
\begin{maintheorem}\label{thm:main1}
  Let $\alpha>\frac{1}{2}$ be given. Then, solutions to the
  Gross-Pitaevskii hierarchy on $\Lambda_2$ are unique in the class
  $\widetilde{\mathcal{A}}(\alpha)$.
\end{maintheorem}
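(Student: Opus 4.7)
The strategy is to adapt the Klainerman--Machedon uniqueness scheme (cf.\ \cite{KM,GSS}) to the irrational rectangular torus $\Lambda_2$. By linearity of \eqref{GP1}, it suffices to show that any solution $\widetilde{\Gamma} \in \widetilde{\mathcal{A}}(\alpha)$ with zero initial data is identically zero on a short symmetric interval $[-T,T]$ around $0$; once this is established, the continuity of the parameters $\widetilde{f},\widetilde{g}$ in Definition \ref{mathcalAtilde} allows one to iterate along the time axis to obtain the full conclusion.

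On $[-T,T]$, I iterate the Duhamel formula for \eqref{GP1} $n$ times, expressing $\widetilde{\gamma}^{(k)}(t)$ as a sum of $k(k+1)\cdots(k+n-1)$ time-ordered $n$-fold integrals, each of the form
\begin{equation*}
U^{(k)}(t-t_1)B_{j_1,k+1}U^{(k+1)}(t_1-t_2)B_{j_2,k+2}\cdots U^{(k+n-1)}(t_{n-1}-t_n)B_{j_n,k+n}\widetilde{\gamma}^{(k+n)}(t_n),
\end{equation*}
where $U^{(k)}(s)=e^{is(\Delta_{\vec x_k}-\Delta_{\vec x_k'})}$. The factorial growth in the number of terms is tamed by the Klainerman--Machedon \emph{board-game argument}, which, using the symmetry property i) of Definition \ref{mathcalAtilde}, collapses these terms into at most $C^n$ equivalence classes of \emph{Duhamel trees}. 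This step is structurally identical to \cite{KM,GSS} and makes no use of the arithmetic of $\Lambda_2$.

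The decisive new ingredient, which drives the whole argument, is a uniform-in-$k$ spacetime estimate for the collision operator on $\Lambda_2$ of the form
\begin{equation*}
\bigl\|S^{(k,\alpha)}B_{j,k+1}U^{(k+1)}(\cdot)\sigma^{(k+1)}\bigr\|_{L^2([-1,1];L^2(\Lambda_2^k\times\Lambda_2^k))} \leq C_0 \bigl\|S^{(k+1,\alpha)}\sigma^{(k+1)}\bigr\|_{L^2(\Lambda_2^{k+1}\times\Lambda_2^{k+1})},
\end{equation*}
valid for every $\alpha>1/2$. Via Plancherel and the explicit Fourier series representation of $B_{j,k+1}$, this reduces to a bilinear $L^2_{t,x}$ Strichartz estimate for two free Schr\"odinger flows on $\Lambda_2$ with frequency-localized data, in which a spectral loss of $(\min(N_1,N_2))^{\varepsilon}$ for arbitrarily small $\varepsilon>0$ is permissible. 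This is the main obstacle of the whole scheme: lattice-point counts on irrational ellipses behave fundamentally differently from counts on circles, so the classical $\mathbb{T}^2$ proof of Kirkpatrick--Schlein--Staffilani is unavailable. I plan to invoke the Strichartz-type estimates of Bourgain on general rectangular tori from \cite{B07}, together with refinements afforded by the $\ell^2$-decoupling theorem; the $\varepsilon$-loss is absorbed by the room provided by the assumption $\alpha>1/2$.

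With the collision estimate in hand, I apply it $n-1$ times to bound each Duhamel tree and then invoke property ii) of Definition \ref{mathcalAtilde} on the last factor $B_{j_n,k+n}\widetilde{\gamma}^{(k+n)}$, dominating each tree by a constant multiple of $T^{(n-1)/2}C_0^{n-1}\widetilde{f}^{\,k+n}(0)$. Summing over the at most $C^n$ Duhamel trees and choosing $T$ small enough that $C\,C_0\sqrt{T}\,\widetilde{f}(0)<1$, the level-$n$ contribution tends to $0$ as $n\to\infty$, whence $\widetilde{\gamma}^{(k)}\equiv 0$ on $[-T,T]$ for every $k$, which completes the proof.
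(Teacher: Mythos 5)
Your overall skeleton --- Duhamel iteration, Klainerman--Machedon board-game reformulation, a uniform-in-$k$ collision-operator spacetime estimate, and summing the resulting geometric series --- is exactly the framework the paper uses, so the combinatorial and iterative parts of your argument are sound and coincide with Section \ref{A conditional uniqueness result in 2D}.

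The proof you have written, however, leaves the paper's central lemma unproved. Everything hinges on the collision estimate, and for it you only record a \emph{plan}: ``invoke the Strichartz-type estimates of Bourgain on general rectangular tori \ldots together with refinements afforded by the $\ell^2$-decoupling theorem.'' This is precisely where the paper has to do genuinely new work. After the standard Cauchy--Schwarz reduction, the estimate does not reduce to a single bilinear $L^2_{t,x}$ Strichartz inequality but to the uniform boundedness of a constrained trilinear multiplier $I(\tau,p)$, namely \eqref{I_tau_p2}, whose constraint is a \emph{thickened} level set of the bilinear form $Q(n,m)$ because $e^{it\Delta_Q}$ is not time-periodic. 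The paper's proof proceeds via a Fourier-analytic counting bound \eqref{E_tau_p_Fourier_transform_bound} (Bourgain's exponential-sum device), a dyadic decomposition \eqref{E_tau_p}, the elementary divisor-function $L^4$ estimate of Lemma \ref{lem:l4} applied coordinatewise after factorizing the quadratic form, and a thickening lemma (Lemma \ref{lem:thick}) for the lopsided cases. It explicitly avoids decoupling and explicitly records why the $\mathbb{T}^2$ lattice-determinant argument fails. Asserting that decoupling ``absorbs an $\varepsilon$-loss'' does not yet constitute an argument that the multiplier bound \eqref{E_tau_p_bound_2D} holds; you would need to supply a decoupling-based proof of that counting bound (or of the trilinear $L^2_{t,x}$ estimate it encodes), which is not a routine consequence of the linear Strichartz estimates you cite.

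There is a second, subtler issue. You work directly on $\Lambda_2$ and write a collision estimate with a $k$-independent constant $C_0$. The paper deliberately does \emph{not} do this: it proves the estimate on $\Lambda=\mathbb{T}^2$ with the anisotropic Laplacian $\Delta_Q$, then transfers uniqueness to $\Lambda_2$ via the rescaling \eqref{rescaling}, Lemma \ref{correspondence}, and Lemma \ref{correspondence2}. Remark \ref{2D_spacetime_estimate_bound_Lambda2_remark} spells out why: if one obtains the $\Lambda_2$-estimate by scaling the $\Lambda$-estimate, the inhomogeneity of the Japanese brackets introduces a factor $C_1^k$, which is fatal for the iteration. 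A direct proof on $\Lambda_2$ can plausibly avoid this (since only finitely many brackets survive in the multiplier), but that is a nontrivial point that you pass over silently; as written, your constant $C_0$ is unjustified. You should either prove the multiplier bound directly on the lattice $\theta_1\mathbb{Z}\times\theta_2\mathbb{Z}$ and verify the constant is $k$-uniform, or adopt the paper's rescaling route, in which case Lemma \ref{correspondence2} is an additional lemma you must prove.
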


Theorem \ref{thm:main1} resolves an open problem for the irrational
torus stated in \cite{KSS}. In particular, we can implement the result
obtained in Theorem \ref{thm:main1} into the derivation strategy and combine
the analysis of \cite{KSS} adapted to the case of general
two-dimensional tori to deduce:
\begin{maintheorem}\label{thm:main2}
  The convergence in \eqref{convergence} holds on $\Lambda_2$ for $V
  \in W^{2,\infty}(\Lambda_2)$ with $V \geq 0$, $\int_{\Lambda_2}
  V(x)\,dx=b_0>0$, for $\beta \in (0,\frac{3}{4})$, and for
  $(\psi_N)_N \in \mathop{\bigoplus}_{N} L^2(\Lambda_2^N)$ satisfying
  the assumptions of:
  \begin{itemize}
  \item[i)] Bounded energy per particle:
    \begin{equation}
      \label{Bounded energy per particle}
      \sup_{N \in \mathbb{N}} \frac{1}{N} \langle \psi_N, H_N \psi_N \rangle < \infty.
    \end{equation}
  \item[ii)] Asymptotic factorization, i.e.\ there exists $\phi \in
    H^1(\Lambda_2)$ with $\|\phi\|_{L^2(\Lambda_2)}=1$ such that:
    \begin{equation}
      \label{Asymptotic factorization}
      Tr \big|\gamma_N^{(1)}-|\phi \rangle \langle \phi|^{\otimes k} \big| \rightarrow 0\,\,\,\mbox{as}\,\,N \rightarrow \infty.
    \end{equation}
  \end{itemize}
  The function $\phi$ is taken as the initial data in \eqref{NLS}.
\end{maintheorem}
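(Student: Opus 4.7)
The strategy is to follow the three-step scheme of Kirkpatrick--Schlein--Staffilani \cite{KSS}---compactness, identification of the limit hierarchy, and uniqueness---with two modifications: the uniqueness input is replaced by Theorem \ref{thm:main1}, and dispersive inputs tied to the rational torus $\mathbb{T}^2$ are replaced by their counterparts on the general rectangular torus $\Lambda_2$. Once both the limit of $(\gamma_{N,t}^{(k)})_k$ and the factorized sequence $(|\phi_t\rangle\langle\phi_t|^{\otimes k})_k$ are shown to solve \eqref{GP1} on $\Lambda_2$ with the same initial data and to lie in $\widetilde{\mathcal{A}}(\alpha)$ for some $\alpha>\tfrac{1}{2}$, Theorem \ref{thm:main1} forces them to coincide, which yields \eqref{convergence}.

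\textbf{Energy bound, compactness, and limit hierarchy.} First I would establish the higher-order energy estimate $\langle \psi_N,(H_N+N)^k \psi_N\rangle \leq C^k N^k$ uniformly in $N$ and $t$, using $V\geq 0$, $V\in W^{2,\infty}$, the assumption \eqref{Bounded energy per particle}, and the constraint $\beta\in(0,\tfrac{3}{4})$; this is a purely operator-theoretic Erd\H{o}s--Schlein--Yau/KSS computation that transports verbatim to $\Lambda_2$. Partial tracing yields the $H^1$-type a priori bound $Tr\,\prod_{j=1}^k(1-\Delta_{x_j})\gamma_{N,t}^{(k)}\leq C^k$. Weak-$*$ compactness in trace class combined with an Arzel\`a--Ascoli argument produces a subsequence $N_j\to\infty$ and limits $(\gamma_t^{(k)})_k$. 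Passing to the limit in the BBGKY hierarchy, using $V_N \rightharpoonup b_0\,\delta$ and the $H^1$ bound to control the interaction, shows that $(\gamma_t^{(k)})_k$ solves \eqref{GP1} in integrated form on any finite time interval.

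\textbf{Verification of the a priori bound.} The heart of the proof, and the step I expect to be the main obstacle, is showing that the limit $(\gamma_t^{(k)})_k$ lies in $\widetilde{\mathcal{A}}(\alpha)$ for some $\alpha>\tfrac{1}{2}$. I would upgrade the $H^1$ trace bound to $Tr\,S^{(k,\alpha)}\gamma_t^{(k)}S^{(k,\alpha)}\leq C^k$ (valid since for $\alpha\leq 1$ the operator $S^{(k,\alpha)}$ is dominated by $\prod_{j=1}^k(1-\Delta_{x_j})^{1/2}$), and then combine this $L^2$ input with a Klainerman--Machedon-type spacetime collision estimate on $\Lambda_2$. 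The difficulty is that the number-theoretic Strichartz input used by Bourgain on $\mathbb{T}^2$ is unavailable on the irrational torus; the required estimate must instead be obtained through an $X^{s,b}$-type argument based on the Strichartz bounds of \cite{B07} on $\Lambda_2$ (and subsequent refinements via $\ell^2$ decoupling), which is precisely what brings in the threshold $\alpha>\tfrac{1}{2}$---the same regime where Theorem \ref{thm:main1} applies. Parallel reasoning, applied to the $H^1$ solution $\phi_t$ of \eqref{NLS} on $\Lambda_2$ (locally well-posed by \cite{B07}), shows that the factorized sequence $(|\phi_t\rangle\langle\phi_t|^{\otimes k})_k$ also belongs to $\widetilde{\mathcal{A}}(\alpha)$.

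\textbf{Conclusion via uniqueness.} Hypothesis \eqref{Asymptotic factorization} yields the initial factorization $\gamma_0^{(k)}=|\phi\rangle\langle\phi|^{\otimes k}$ at every level $k$, via the standard facts that weak-$*$ convergence to a rank-one operator of equal trace is automatically strong and that asymptotic Bose statistics of $\psi_N$ propagate from $k=1$ to all $k$. Theorem \ref{thm:main1} then forces $\gamma_t^{(k)}=|\phi_t\rangle\langle\phi_t|^{\otimes k}$ for every $k$ and every $t$. Since the limit is a pure factorized state of equal trace, weak-$*$ convergence upgrades to trace-norm convergence and the limit is independent of the extracted subsequence, completing the proof of \eqref{convergence}.
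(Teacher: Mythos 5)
Your three-step architecture (compactness and energy bounds $\to$ identification of the limit hierarchy $\to$ uniqueness forces coincidence with the factorized solution) is the right one and matches the paper's scheme, and so does the endgame via Theorem~\ref{thm:main1}. However, there is a concrete misattribution in the middle step that would send you down the wrong road if you tried to execute it.

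You declare that verifying membership of the limit $(\gamma_t^{(k)})_k$ (and of the factorized sequence) in $\widetilde{\mathcal{A}}(\alpha)$ is ``the heart of the proof'' and ``the main obstacle,'' and that it must rest on an $X^{s,b}$/Strichartz argument adapted to $\Lambda_2$ together with a Klainerman--Machedon spacetime collision estimate. That is not how this step works, on the rational torus or on $\Lambda_2$. The analogue of \cite[Theorem 5.2]{KSS} on $\Lambda_2$ is proved by propagating a higher-order energy estimate $\langle \psi_{N},(H_N+N)^k\psi_N\rangle\lesssim C^kN^k$ to the trace bound $Tr\,\prod_j(1-\Delta_{x_j})\gamma^{(k)}_{N,t}\leq C^k$ and then invoking \emph{pointwise-in-time} Sobolev embedding to control $\|S^{(k,\alpha)}B_{j,k+1}\gamma^{(k+1)}(t)\|_{L^2}$; this yields membership in $\widetilde{\mathcal{A}}(\alpha)$ for every $\alpha<1$ with $\widetilde f,\widetilde g$ constant. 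Crucially, Sobolev embedding is insensitive to whether the torus is rational or irrational, so this step transports verbatim from $\mathbb{T}^2$ to $\Lambda_2$ --- as the paper explicitly notes, the a priori bound requires no Diophantine input and no spacetime norms at all. The Klainerman--Machedon spacetime collision estimate (Proposition~\ref{2D_spacetime_estimate}) is a free-evolution bound and is the central ingredient in the proof of the uniqueness theorem, not in the a priori-bound verification. Thus you have effectively permuted which step is novel on $\Lambda_2$ (the spacetime estimate feeding Theorem~\ref{thm:main1}) and which step is routine transfer from KSS (the a priori bound). You also claim the a priori bound ``brings in the threshold $\alpha>\tfrac12$''; in fact the a priori bound gives every $\alpha<1$, while the $\alpha>\tfrac12$ restriction is imposed by the sharp spacetime estimate underlying Theorem~\ref{thm:main1}, and the argument merely uses that the two ranges overlap on $(\tfrac12,1)$. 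If you actually attempted to derive membership in $\widetilde{\mathcal{A}}(\alpha)$ through spacetime Strichartz bounds as you propose, you would be reconstructing a harder and partly circular route for a step that follows by elementary Sobolev embedding.
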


Theorem \ref{thm:main1} is stated as Theorem
\ref{uniqueness_Lambda2_2D} and Theorem \ref{thm:main2} is stated as
Theorem \ref{NLS_Lambda2} below.

In the three-dimensional setting, we can prove the following
conditional uniqueness result:
\begin{maintheorem}\label{thm:main3}
  Let $\alpha>1$ be given. Then, solutions to the Gross-Pitaevskii
  hierarchy on $\Lambda_3$ are unique in the class
  $\widetilde{\mathcal{A}}(\alpha)$. Moreover, whenever $\alpha \geq
  1$, the class $\widetilde{\mathcal{A}}(\alpha)$ is non-empty and it
  contains the factorized solutions corresponding to initial data in
  $H^{\alpha}(\Lambda_3)$.
\end{maintheorem}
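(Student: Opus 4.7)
The plan is to extend the Klainerman--Machedon (KM) framework to the irrational torus $\Lambda_3$. For two solutions $\widetilde{\Gamma}_1, \widetilde{\Gamma}_2 \in \widetilde{\mathcal{A}}(\alpha)$ with common initial data, I would iterate the Duhamel formula for \eqref{GP1} $n$ times, apply the KM board game to collapse the resulting $k(k+1)\cdots(k+n-1)$ nested integrals into at most $C^n$ contributions indexed by special permutations, and then estimate each contribution by iterated applications of a spacetime bound of the form
\begin{equation*}
\bigl\|S^{(k,\alpha)}B_{j,k+1}\bigl(U^{(k+1)}(t)\sigma^{(k+1)}\bigr)\bigr\|_{L^2_t L^2_{\vec{x}_k,\vec{x}'_k}(I\times\Lambda_3^k\times\Lambda_3^k)} \leq C_I \bigl\|S^{(k+1,\alpha)}\sigma^{(k+1)}\bigr\|_{L^2},
\end{equation*}
where $U^{(k+1)}(t) := e^{it(\Delta_{\vec{x}_{k+1}} - \Delta_{\vec{x}'_{k+1}})}$ is the free evolution. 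Feeding in the a priori bound from Definition \ref{mathcalAtilde}, the $n$-th iterate of the difference will be dominated by $(C\,\widetilde{g}(t)\,\widetilde{f}(t)^2)^n$, which tends to $0$ on a sufficiently short subinterval; shrinking this subinterval forces $\widetilde{\gamma}_1^{(k)} \equiv \widetilde{\gamma}_2^{(k)}$ locally, and continuity propagates uniqueness globally.

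The core difficulty, and the main obstacle, is the spacetime bound on the right geometry. On $\mathbb{T}^3$ such an estimate follows from lattice-counting arguments in the style of Bourgain, but these number-theoretic inputs fail on a general $\Lambda_3$. Instead, by taking the Fourier transform and using a $TT^\ast$ argument, one reduces matters to an $L^4_{t,x}$-type discrete Strichartz estimate on $\Lambda_3$. I would deduce this from the $\ell^2$-decoupling theorem of Bourgain--Demeter, which delivers Strichartz estimates on every rectangular torus at the scaling-critical regularity $s=\tfrac{1}{2}$ with an arbitrarily small $\epsilon$-loss. The threshold $\alpha > 1$ then plays a twofold role: it absorbs the $\epsilon$-loss from decoupling, and it compensates for the extra conjugate derivative of order $\alpha$ contained in $S^{(k,\alpha)}$ that acts on the $x'$-variables after tracing out $x_{k+1}$ in the collision operator. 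Implementing this carefully, uniformly in $\theta_1,\theta_2,\theta_3$, is where the bulk of the work lies.

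For non-emptiness of $\widetilde{\mathcal{A}}(\alpha)$ when $\alpha \geq 1$ and $\phi \in H^\alpha(\Lambda_3)$ with $\|\phi\|_{L^2}=1$, I would use the same Strichartz estimates to solve \eqref{NLS} locally in $H^\alpha$, producing $\phi_t$ on a small interval $I$, and then set $\gamma^{(k)}(t) := |\phi_t\rangle\langle\phi_t|^{\otimes k}$. A direct computation of $B_{j,k+1}(\gamma^{(k+1)})(t)$ expresses it as a rank-one kernel built from $\phi_t$, $\overline{\phi_t}$ and the cubic nonlinearity $|\phi_t|^2\phi_t$, so that
\begin{equation*}
\bigl\|S^{(k,\alpha)}B_{j,k+1}(\gamma^{(k+1)})(t)\bigr\|_{L^2(\Lambda_3^k\times\Lambda_3^k)} \lesssim \bigl\||\phi_t|^2\phi_t\bigr\|_{H^\alpha(\Lambda_3)}\,\|\phi_t\|_{H^\alpha(\Lambda_3)}^{2k}.
\end{equation*}
Fractional Leibniz, Sobolev embedding, and the local-in-time $H^\alpha$-bound on $\phi_t$ control the right-hand side uniformly on $I$ by $\widetilde{f}(t)^{k+1}$ for a continuous positive $\widetilde{f}$; integrating over a sufficiently short window $[t-\widetilde{g}(t),t+\widetilde{g}(t)] \subset I$ yields the class membership.
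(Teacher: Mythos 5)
Your uniqueness argument follows the expected Duhamel--board-game--spacetime-bound template, but your proof of the spacetime bound itself takes a genuinely different route from the paper's, and your non-emptiness argument has a real gap at the endpoint $\alpha=1$.

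On the spacetime estimate: the paper does not use $\ell^2$-decoupling at all. Instead it rescales $\Lambda_3$ to the standard torus $\Lambda=\mathbb{T}^3$ with a modified Laplacian $\Delta_Q$, reduces the estimate by Cauchy--Schwarz to the pointwise multiplier bound $I(\tau,p)$ of \eqref{I_tau_p_3D}, and then proves the counting inequality $\#E_{\tau,p}(j)\lesssim_\epsilon 2^{(2+\epsilon)(j_{\min}+j_{\med})}$ by the Fourier-positivity trick of Bourgain (inequality \eqref{E_tau_p_Fourier_transform_bound}), a change of variables $\eta=n-m$, $\eta'=n+m$ that factorizes the 3D exponential sum into one-dimensional sums, and the {\it elementary} $L^4/L^6$ bounds on Weyl sums (Lemma \ref{lem:l4}, Corollary \ref{cor:lp}) which rest on the divisor bound. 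Crucially, this factorization only closes one of the three dyadic cases; the remaining cases are handled by a geometric covering argument via Lemma \ref{lem:thick}, which a black-box $L^p$ Strichartz estimate would not supply directly. Your $TT^\ast$-to-Strichartz reduction and invocation of Bourgain--Demeter is a plausible alternative, but it is materially harder machinery for a weaker conclusion (the sharp threshold is still $\alpha>1$, as Proposition \ref{Lower_bound_3D} shows, so nothing is gained on the exponent), and you would still need to explain how a bilinear/collision-type multiplier bound follows from a linear $L^p$ estimate. Separately, note that the paper deliberately does {\it not} prove the estimate on $\Lambda_3$ by rescaling the $\Lambda$-estimate: Remark \ref{2D_spacetime_estimate_bound_Lambda2_remark} warns that the inhomogeneous Sobolev weights produce a $C^k$ in the constant, which wrecks the uniqueness iteration. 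Working directly on $\Lambda_3$, as you propose, sidesteps this, but you must then verify that the constant in your $\Lambda_3$-estimate is genuinely $k$-independent; the paper instead passes through Lemma \ref{correspondence}, Lemma \ref{correspondence2}, and Proposition \ref{Lambda_conditional_uniqueness}.

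On non-emptiness for $\alpha\geq 1$: your displayed reduction to $\||\phi_t|^2\phi_t\|_{H^\alpha}\,\|\phi_t\|_{H^\alpha}^{2k}$ is fine, but the claim that ``fractional Leibniz, Sobolev embedding, and the local-in-time $H^\alpha$-bound'' then control this pointwise in $t$ is false at the endpoint $\alpha=1$. On $\Lambda_3$, cubic NLS is $H^1$-critical, $H^1(\Lambda_3)\not\hookrightarrow L^\infty$, and $\|u^3\|_{H^1}\lesssim\|u\|_{H^1}^3$ fails (a concentrating bubble at unit $H^1$ norm gives $\|u_\lambda^3\|_{H^1}\sim\lambda\to\infty$). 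The condition in Definition \ref{mathcalAtilde}(ii) is a time integral precisely so that one can feed in critical spacetime norms; this is what the paper does, invoking the trilinear estimate \eqref{Trilinear_Estimate} of Strunk (adapted to $\Delta_Q$, or equivalently obtainable from Killip--Vi\c{s}an/Bourgain--Demeter) to run the $X^s$-space argument of \cite[Section 5]{GSS}. So your invocation of Strichartz for local well-posedness of NLS is on the right track, but you must carry those spacetime norms through the verification of the a priori bound rather than reducing to a pointwise Sobolev estimate that does not hold for $1\leq\alpha\leq 3/2$.
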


As in \cite{GSS}, the uniqueness in Theorem \ref{thm:main3} is above
the regularity of the energy space and hence is not enough to obtain a
rigorous derivation result. However, it is possible to use the
multilinear estimates on the irrational torus \cite{Strunk} --see also
the recent preprint \cite{Killip_Visan}-- and adapt the arguments in
\cite{VS2} to $\Lambda_3$ to obtain the following unconditional
uniqueness result:
\begin{maintheorem}\label{thm:main4}
  Suppose that $\Gamma(t)=(\gamma^{(k)}(t))_k \in L^{\infty}_{t \in
    [0,T]} \mathfrak{H}^1$ is a mild solution to the Gross-Pitaevskii
  hierarchy on $\Lambda_3$ such that each component $\gamma^{(k)}(t)$
  can be written as a limit in the weak-$*$ topology of the trace
  class on $L^2_{sym}(\Lambda_3^k \times \Lambda_3^k)$ of
  $Tr_{k+1,\ldots,N} \Gamma_{N,t}$. Here, each $\Gamma_{N,t} \in
  L^2_{sum}(\Lambda_3^N \times \Lambda_3^N)$ is non-negative as an
  operator and it has trace equal to $1$. Then $\Gamma(t)$ is uniquely
  determined by the initial data $\Gamma(0)$.
\end{maintheorem}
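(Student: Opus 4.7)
The plan is to adapt the strategy of \cite{VS2} to the setting of $\Lambda_3$, using as the main technical input the multilinear Strichartz estimates on the irrational torus of \cite{Strunk} (and \cite{Killip_Visan}) in place of the standard periodic or Euclidean ones. Concretely, I would consider the difference $\Delta\Gamma(t) := \Gamma_1(t) - \Gamma_2(t)$ of two candidate solutions satisfying the hypotheses of Theorem \ref{thm:main4} and sharing the same initial data, and aim to show that $\Delta\Gamma(t)\equiv 0$ on $[0,T]$. The standard first move is to iterate the Duhamel formula $k$ times, expressing each $\Delta\gamma^{(k)}(t)$ as a $k$-fold time-ordered integral over a simplex of free Schr\"odinger propagators composed with the collision operators $B_{j,k+1}$.

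The first key step is to invoke the quantum de Finetti theorem of Ammari--Nier and Lewin--Nam--Rougerie. Since each $\gamma^{(k)}(t)$ is by assumption a weak-$*$ limit in the trace class of partial traces of symmetric non-negative unit-trace operators, and since the $\mathfrak{H}^1$ bound propagates uniformly through the approximation, there exists a Borel probability measure $\mu_t$ on the closed unit ball of $L^2(\Lambda_3)$, concentrated on $H^1(\Lambda_3)$, such that
\[
\gamma^{(k)}(t) = \int |\phi\rangle\langle\phi|^{\otimes k}\,d\mu_t(\phi).
\]
This crucial reduction replaces the general Duhamel analysis with an analysis on factorized states, for which the iterated collision expansion reduces to spacetime estimates on powers of the free evolution of $\phi^{\otimes(k+1)}$.

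The second step is to apply the Klainerman--Machedon combinatorial board-game argument, which collapses the $k!$ ordered terms in the Duhamel expansion into at most $C^k$ equivalence classes indexed by rooted binary trees. For each representative tree one applies the multilinear Strichartz estimate on $\Lambda_3$ from \cite{Strunk,Killip_Visan}, which for any $\alpha > 1$ yields control of $\|S^{(k,\alpha)}B_{j,k+1}(\cdots)\|_{L^2_{t,x}}$ by a constant times $\|\phi\|_{H^\alpha}^{2}$, uniformly in $k$, $j$, and in the tree structure. Combining this with the $C^k$ count and integrating over $\mu_t$ (whose moments in $H^\alpha$ are controlled by the $\mathfrak{H}^1$ bound together with interpolation against the weak-$*$ limit hypothesis) produces a bound on the $k$-th Duhamel iterate of the form $(CT_0^{1/2})^k$ on a short interval $[0,T_0]$. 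Choosing $T_0$ sufficiently small makes this a convergent geometric series whose tail vanishes as $k\to\infty$, forcing $\Delta\Gamma\equiv 0$ on $[0,T_0]$; a standard bootstrap extends the conclusion to $[0,T]$.

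The principal obstacle is the verification in step two that the multilinear estimates on a general irrational $\Lambda_3$ are strong enough to close the scheme. On $\mathbb{T}^3$ one uses number-theoretic lattice-point counting, which is unavailable once $\theta_1,\theta_2,\theta_3$ are generic, so one must instead extract the required trilinear spacetime bound from the $\ell^2$-decoupling Strichartz inequalities of \cite{Strunk} (or the variant in \cite{Killip_Visan}). One has to ensure that the constants in these estimates are uniform in $\theta_1,\theta_2,\theta_3$ on compact sets and that the range of admissible regularities truly includes some $\alpha > 1$ in the form needed by the board-game argument. A secondary technical point is justifying that the weak-$*$ limit structure (as opposed to an \emph{a priori} $\mathfrak{H}^1$ admissibility assumption) is compatible both with the de Finetti representation and with taking the pointwise-in-time Duhamel identity; this requires a careful approximation argument using the BBGKY solutions $\Gamma_{N,t}$ themselves before passing to the limit.
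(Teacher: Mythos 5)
Your high-level plan — represent the components via the weak quantum de Finetti theorem of Ammari--Nier and Lewin--Nam--Rougerie (following \cite{ChHaPavSei}), expand in Duhamel iterates, reduce the combinatorics with a board-game type argument, and close with multilinear Strichartz estimates from \cite{Strunk} — is indeed the route the paper takes, which defers essentially all details to \cite[Section 4]{VS2} and \cite{ChHaPavSei}, with the single substantive modification being to replace the $\mathbb{T}^3$ trilinear estimate of \cite{HTT} by the irrational-torus version \eqref{Trilinear_Estimate} from \cite[Proposition 4.1]{Strunk}.

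However, the key technical step of your proposal would not close. You propose to estimate each representative term after the combinatorial reduction by the spacetime bound $\|S^{(k,\alpha)}B_{j,k+1}(\cdots)\|_{L^2_{t,x}}\lesssim\|\phi\|_{H^\alpha}^2$ with $\alpha>1$. That is precisely the conditional-uniqueness estimate of Proposition \ref{3D_spacetime_estimate}, which is sharp at $\alpha>1$ by Proposition \ref{Lower_bound_3D}, and invoking it in the de Finetti scheme would only give uniqueness of solutions in $\mathfrak{H}^\alpha$ for some $\alpha>1$. Theorem \ref{thm:main4} assumes only $\Gamma(t)\in L^\infty_{t\in[0,T]}\mathfrak{H}^1$, and the de Finetti measure $\mu_t$ is concentrated on (the unit ball of) $H^1(\Lambda_3)$, not on $H^\alpha$ for any $\alpha>1$; your parenthetical appeal to ``interpolation against the weak-$*$ limit hypothesis'' to control $H^\alpha$-moments of $\mu_t$ with $\alpha>1$ is not a real mechanism, since no interpolation can create regularity above what is assumed. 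The whole reason to route the argument through quantum de Finetti is to get \emph{below} the $\alpha>1$ threshold of the Klainerman--Machedon type spacetime bound. What actually closes the scheme at $\alpha=1$ is the frequency-localized trilinear estimate \eqref{Trilinear_Estimate},
$$\|P_{N_1}e^{it\Delta_Q}f_1\, P_{N_2}e^{it\Delta_Q}f_2\, P_{N_3}e^{it\Delta_Q}f_3\|_{L^2(I\times\Lambda)}\lesssim N_2N_3\max\Big\{\tfrac{N_3}{N_1},\tfrac{1}{N_2}\Big\}^{\delta_0}\prod_{i}\|P_{N_i}f_i\|_{L^2},$$
whose gain factor $\max\{N_3/N_1,1/N_2\}^{\delta_0}$ is what makes the dyadic sums converge with only one derivative, exactly as in \cite[Proposition 3.1]{VS2} and the analogues of \cite[(41),(42)]{VS2}. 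Substituting this estimate (and the rescaling of Lemma \ref{correspondence} to go from $\Lambda$ to $\Lambda_3$) for your $S^{(k,\alpha)}$-bound fixes the gap. Your final remark about the weak-$*$ limit hypothesis being a complication also has the logic reversed: that hypothesis is precisely what the weak quantum de Finetti theorem requires, so it enables the first step rather than obstructing it.
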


Theorem \ref{thm:main4} can hence be used to deduce the following
derivation result:
\begin{maintheorem}\label{thm:main5}
  Let $V: \mathbb{R}^3 \rightarrow \mathbb{R}$ be non-negative,
  smooth, compactly supported and suppose that
  $b_0=\int_{\mathbb{R}^3} V(x)\,dx=b_0>0$. Moreover, let $\beta \in
  (0,\frac{3}{5})$, and let $(\psi_N)_N \in \mathop{\bigoplus}_{N}
  L^2(\Lambda_3^N)$ satisfy the assumptions \eqref{Bounded energy per
    particle} and \eqref{Asymptotic factorization}.
\end{maintheorem}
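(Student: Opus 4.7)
The plan is to follow the standard compactness-and-uniqueness scheme for the derivation of the cubic NLS from many-body dynamics, using Theorem \ref{thm:main4} as the uniqueness input, and to deduce the convergence \eqref{convergence} on $\Lambda_3$. Starting from $(\psi_N)_N$, let $\Psi_{N,t}$ solve the many-body Schr\"{o}dinger equation associated to $H_N$, and form the marginals $\gamma^{(k)}_{N,t}$ as in \eqref{gammakNt}. The first step is to derive uniform-in-$N$ a priori energy bounds: using the non-negativity of $V$ together with \eqref{Bounded energy per particle}, standard propagation-of-energy estimates yield, for each fixed $k$ and $T>0$, a bound of the form $Tr\bigl(\prod_{j=1}^{k}(1-\Delta_{x_j})\,\gamma^{(k)}_{N,t}\bigr) \leq C^k$, uniformly for $t \in [0,T]$. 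By Banach-Alaoglu together with a diagonal extraction and a time-equicontinuity argument, one obtains a subsequence along which $\gamma^{(k)}_{N_j,t} \rightharpoonup \gamma^{(k)}(t)$ in the weak-$*$ trace-class topology, for every $k$ and every $t\in[0,T]$.

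Next, I would pass to the limit in the BBGKY integral equation to identify $\Gamma(t)=(\gamma^{(k)}(t))_k$ as a mild solution of the Gross-Pitaevskii hierarchy \eqref{GP1} on $\Lambda_3$ with coupling constant $b_0=\int_{\mathbb{R}^3}V(x)\,dx$. The free-flow term passes by duality, and the subleading intra-cluster commutator of order $1/N$ in the BBGKY hierarchy vanishes in the limit. The essential point is the convergence of the interaction term: for fixed $j$ and $k$, $\tfrac{N-k}{N}\,Tr_{k+1}[V_N(x_j-x_{k+1}),\gamma^{(k+1)}_{N,t}] \to b_0\,B_{j,k+1}(\gamma^{(k+1)}(t))$, which amounts to replacing the approximate delta $V_N$ by $b_0\delta$ against the $(k+1)$-marginal. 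It is here that the condition $\beta \in (0,\tfrac{3}{5})$ is used, reflecting the strength of the multilinear/Strichartz estimates for the periodic Schr\"{o}dinger operator on general three-dimensional rectangular tori available from \cite{Strunk, Killip_Visan}.

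Having identified $\Gamma(t)$ as a mild GP solution, I verify the hypotheses of Theorem \ref{thm:main4}: each $\gamma^{(k)}(t)$ is, by construction, the weak-$*$ limit of $Tr_{k+1,\ldots,N}|\Psi_{N,t}\rangle\langle\Psi_{N,t}|$ of non-negative, trace-one, symmetric operators, and the $\mathfrak{H}^1$ bound for $\Gamma(t)$ follows from the energy estimate of the first step combined with weak-$*$ lower-semicontinuity. The asymptotic factorization assumption \eqref{Asymptotic factorization}, combined with symmetry, yields factorization at $t=0$ on all marginals, $\gamma^{(k)}(0)=|\phi\rangle\langle\phi|^{\otimes k}$ (via the standard argument that, for symmetric states of unit trace, factorization of the first marginal implies factorization of all marginals). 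Theorem \ref{thm:main4} then forces $\gamma^{(k)}(t)=|\phi_t\rangle\langle\phi_t|^{\otimes k}$, where $\phi_t$ is the NLS flow \eqref{NLS} issued from $\phi$. Since every subsequential limit equals the same factorized sequence, the full sequence converges weakly-$*$, and the classical upgrade from weak-$*$ convergence to a pure state of unit trace to trace-norm convergence yields \eqref{convergence} on $\Lambda_3$.

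The main obstacle is the interaction-limit step of the second paragraph: on a general three-dimensional rectangular torus one cannot appeal to the sharper Strichartz estimates available on $\mathbb{T}^3$, and one must instead feed the weaker multilinear framework of \cite{Strunk, Killip_Visan} into the delta-approximation estimate $V_N \rightharpoonup b_0\delta$ against the $(k+1)$-marginals, uniformly in $N$. This step dictates the precise threshold $\beta<\tfrac{3}{5}$. All other ingredients -- energy bounds, compactness, identification of the initial data, and the trace-norm upgrade -- are adaptations of established arguments to the general-torus setting, with the geometric dependence on $(\theta_1,\theta_2,\theta_3)$ entering only through the Strichartz constants.
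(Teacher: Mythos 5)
Your overall two-step scheme (compactness together with identification of the limit as a mild GP solution, followed by an application of the unconditional uniqueness result of Theorem \ref{thm:main4}, and then a trace-norm upgrade) is the one the paper follows; this part is sound, as is your treatment of the initial-data factorization and the verification of the hypotheses of Theorem \ref{thm:main4}.

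However, you have misattributed where the multilinear/Strichartz input from \cite{Strunk,Killip_Visan} is used, and this is a genuine conceptual gap. You claim that the interaction-limit step --- replacing $V_N$ by $b_0\delta$ against the $(k+1)$-marginals and passing from BBGKY to GP --- is where the threshold $\beta<\tfrac{3}{5}$ is dictated, and that one must feed the weaker irrational-torus Strichartz framework into that step. This is not the case: the entire compactness-and-identification step is adapted from \cite{EESY}, and those arguments rest only on Sobolev embedding and energy estimates, which are insensitive to Diophantine properties of the frequencies and therefore carry over to $\Lambda_3$ verbatim. The constraint $\beta\in(0,\tfrac{3}{5})$ originates there, in the classical energy/compactness estimates of \cite{EESY}, not in any dispersive input. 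The multilinear estimates of \cite{Strunk} (equivalently the decoupling-based estimates of \cite{Killip_Visan}) are needed only inside the proof of the unconditional uniqueness result, Theorem \ref{thm:main4}: they replace the $\mathbb{T}^3$ trilinear estimate of \cite{HTT} that was used in \cite{VS2} to establish the key collision-operator bounds (the analogues of \cite[Proposition 3.1]{VS2} and \cite[inequalities (41)--(42)]{VS2}) within the quantum de Finetti framework. As written, your proposal suggests that the uniqueness step is straightforward and the compactness step is where the new dispersive technology is needed, which inverts the actual logic of the argument; if one tried to make the interaction-limit step depend on Strichartz estimates, it would be a much more delicate (and unnecessary) undertaking than the Sobolev-based approach of \cite{EESY}.
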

Then, the convergence \eqref{convergence} holds.  Theorem
\ref{thm:main3}, Theorem \ref{thm:main4}, and Theorem \ref{thm:main5}
are stated respectively as Theorem \ref{uniqueness_Lambda3_3D},
Theorem \ref{unconditional_uniqueness_3D}, and Theorem
\ref{NLS_Lambda3} below.  A key idea in the proof of Theorem
\ref{thm:main1} and Theorem \ref{thm:main3} is to prove certain
spacetime estimates. The spacetime estimate in two dimensions is given
in Proposition \ref{2D_spacetime_estimate} and sharpness is shown in
Proposition \ref{Lower_bound_2D}. In three dimensions, the spacetime
estimate is given in Proposition \ref{3D_spacetime_estimate} and
sharpness is shown in Proposition \ref{Lower_bound_3D}. The sharp
spacetime estimate in the general case of $d$ dimensions is noted in
Remark \ref{higher_dimensions}. Once we have a good spacetime
estimate, it is possible to apply the analysis in the work of T.~Chen
and Pavlovi\'{c} \cite{CP4} to obtain local-in-time solutions to the
hierarchy. The details of this construction are recalled in Section
\ref{Local-in-time solutions}.

\subsection{Physical interpretation}
Finally, we briefly recall the physical interpretation, similarly to
\cite[Subsection 1.3.2]{GSS}: The Gross-Pitaevskii hierarchy and the
nonlinear Schr\"{o}dinger equation occur in the framework of
Bose-Einstein condensation. This is a state of matter which is made up
of bosonic particles which are cooled to a temperature close to
absolute zero. In such an environment, they tend to occupy the lowest
quantum state. This state corresponds to a ground state of an energy
functional associated to an NLS-type equation. In such a context, the
NLS equation is sometimes refered to as the \emph{Gross-Pitaevskii
  equation}, after the work of Gross \cite{Gross} and Pitaevskii
\cite{Pitaevskii}.  The phenomenon of Bose-Einstein condensation was
theoretically predicted in 1924-1925 in the work of Bose \cite{Bose}
and Einstein \cite{Einstein}. The existence of this state of matter
was experimentally verified independently by the research teams of
Cornell and Wieman \cite{CW} and Ketterle \cite{Ketterle} in
1995. Both experimental groups were awarded the Nobel Prize in Physics
in 2001 for their achievement.

\subsection{Previous results}

In this subsection, we will briefly discuss further related
results and provide references, which in part is similar to the
exposition in \cite{GSS,VS2}.

Properties of the ground state for the $N$-body Hamiltonian $H_N$ have
been studied by Lieb and Seiringer \cite{LS}, Lieb, Seiringer and
Yngvason \cite{LSY,LSY2}, and Lieb, Seiringer, Yngvason, and Solovej
\cite{LSSY2}. In these works, the assumption of \emph{asymptotic
  factorization} as in \eqref{Asymptotic factorization} of the initial
data was rigorously verified for a sequence consisting of appropriate
ground states. These results are summarized in the expository work
\cite{LSSY}.

A method for proving \eqref{convergence} by means of the BBGKY hierarchy was first developed by Spohn
\cite{Spohn} when the spatial domain is $\mathbb{R}^d$. Using this approach, the result obtained in \cite{Spohn}
is a rigorous derivation of the Hartree equation $i u_t+\Delta
u=(V*|u|^2)u$ on $\mathbb{R}^d$ with $V \in L^{\infty}(\mathbb{R}^d)$. Due to the regularity assumptions on the convolution potential, in this case it is possible to explicitly compute the infinite Duhamel expansions $\gamma^{(k)}_{\infty,t}$ and show that it is the appropriate limit of $(\gamma^{(k)}_{N,t})_N$. This result could be extended to the situation of more singular $V$ in \cite{BEGMY,BGM,EY} by adapting the following two-step derivation strategy.  In the first step, it is shown that the
sequence $(\gamma^{(k)}_{N,t})_N$ satisfies certain compactness
properties, and that the obtained subsequential limits solve the GP
hierarchy.  In the second step, it is shown that solutions to the GP
hierarchy are unique in a class of objects containing this limit. Due
to the complexity of the system, the uniqueness step is significantly
non-trivial.

A breakthrough in that direction was the rigorous derivation of
the defocusing cubic NLS on $\mathbb{R}^3$ in the work of Erd\H{o}s,
Schlein, and Yau \cite{ESY2,ESY3,ESY4,ESY5}. Here, the uniqueness step
was obtained by the use of Feynman graph
expansions.  A combinatorial reformulation of this step under an
additional a priori assumption on the solution was subsequently given
by Klainerman and Machedon \cite{KM}.

An alternative rigorous derivation of
NLS-type equations based on Fock space
techniques was developed by Hepp \cite{Hepp} and Ginibre and
Velo \cite{GV1,GV2}.


The first derivation result in the periodic problem was given in the
case of $\mathbb{T}^2$ in the work of Kirkpatrick, Schlein, and
Staffilani \cite{KSS}. We note that, in the case of $\mathbb{T}^3$,
the first step in the derivation strategy was carried out in the work of
Elgart, Erd\H{o}s, Schlein, and Yau \cite{EESY}, which builds on the
work previously done in \cite{ESY1}. A conditional uniqueness result
for the GP hierarchy on $\mathbb{T}^3$ was shown in a class of density
matrices of regularity $\alpha>1$ by Gressman, the second author, and
Staffilani in \cite{GSS}. This was done by the use of a spacetime
estimate as in \cite{KSS,KM}. In \cite{GSS}, it was shown that the
obtained range of regularity exponents in this estimate was
sharp. Since the regularity is above the natural energy space, which
corresponds to the regularity $\alpha=1$, it is not immediately
possible to apply this result in the second step of the derivation.

In a recent paper, T.~Chen, Hainzl, Pavlovi\'{c}, and Seiringer
\cite{ChHaPavSei} give an alternative proof of the uniqueness result
on $\mathbb{R}^3$ by use of the \emph{(Weak) Quantum de Finetti
  Theorem}, formulated in
\cite{AmmariNier1,AmmariNier2,LewinNamRougerie}. This approach was
subsequently adapted to the setting of $\mathbb{T}^3$ by the second
author in \cite{VS2}, in which the open question of uniqueness from
\cite{EESY} was resolved. As a result, one could obtain a rigorous
derivation of the defocusing cubic NLS on $\mathbb{T}^3$. We note that
the uniqueness result of \cite{VS2} does not directly extend the
uniqueness result of \cite{GSS} since the papers deal with different
classes of density matrices.
Methods based on the Quantum de Finetti Theorem were applied to
related problems in \cite{ChHaPavSei2,CS,HTX}.

Moreover, once one has a rigorous
derivation result, based on either method, it is possible to study the
rate of convergence in \eqref{convergence}. This problem was first
addressed by Rodnianski and Schlein \cite{RodnianskiSchlein}, and subsequently reformulated in \cite{KP}.
The Cauchy problem associated to the GP hierarchy was studied in its
own right in the work of T.~Chen and Pavlovi\'{c} \cite{CP1}, with
later work in \cite{CP2,CP4,CP3,CPT1,CPT2,LewinSabin1,LewinSabin2}.
Randomization techniques were studied in
the context of the Cauchy problem associated to the GP hierarchy in
\cite{VS1,SoSt}. In a recent preprint \cite{LewinNamRougerie4}, a
derivation of the nonlinear Gibbs measure from many-body dynamics is
given. A related result in a discrete setting had also been proved in
\cite{Knowles_Thesis}.
The Klainerman-Machedon a priori bound was studied in further detail
in \cite{CP,CT,ChenHolmer2,ChenHolmer5}. Generalizations of the spacetime
estimate for the GP hierarchy were studied in a different context in
\cite{Beckner1,Beckner2}, with related work in \cite{XC3}. The case of singular convolution potentials was revisited in \cite{FKS,FrTsYau3}. For a more detailed discussion on all of
these results and further references, we refer the reader to \cite[Subsection 1.3.2]{GSS} and \cite[Section 1.1 and Section 1.3]{VS2}, as well as to the expository works \cite{CP_survey_article,Schlein}.

As was noted above, the study of the NLS on general rectangular tori
was first started in the work of Bourgain \cite{B07}. Here, it was
shown that certain Strichartz estimates with a loss of derivative
hold. The estimates were much weaker than those proved in the setting
of the classical torus \cite{B}, due to number-theoretical
difficulties. The NLS on irrational tori has been studied further in
\cite{Catoire_Wang,Demirbas,Guo_Oh_Wang,Strunk}. A different approach
to the problem, which has led to stronger results, including the
stronger Strichartz estimates conjectured in \cite{B07}, was recently
taken in the work of Bourgain and Demeter
\cite{Bourgain_Demeter1,Bourgain_Demeter2,Bourgain_Demeter3}, with
subsequent work by Killip and Vi\c{s}an \cite{Killip_Visan}.

\subsection{Main ideas of the proof}
We choose the setup of \cite{B07,Guo_Oh_Wang,Strunk}. To this end, we will rescale the domain
$\Lambda_d$ to the classical torus $\Lambda$. Consequently, we will be
able to work with Fourier series defined on the standard lattice
$\mathbb{Z}^d$. Due to the scaling transformation, we will have to
work with the modified Laplacian $\Delta_Q$, as well as with the
associated quadratic form $Q$. In the rescaled setting, we will
consider a Gross-Pitaevskii hierarchy with modified Laplacian on
$\Lambda$ as in \eqref{GP}.

A crucial ingredient in proving conditional uniqueness for this
hierarchy on $\Lambda$ is a spacetime estimate associated to
$\,\mathcal{U}^{(k)}_Q(t)$, the free Schr\"{o}dinger evolution on
density matrices of order $k$ on $\Lambda$ corresponding to
$\Delta_Q$. The operator $\mathcal{U}^{(k)}_Q(t)$ is precisely defined
in \eqref{Uk_Q}. This estimate is stated in two dimensions in
Proposition \ref{2D_spacetime_estimate} and in three dimensions in
Proposition \ref{3D_spacetime_estimate}. By standard techniques, the
spacetime bound is reduced to a pointwise estimate on a corresponding
multiplier $I(\tau,p)$ as in \cite{GSS}.

A challenge in proving the pointwise estimate lies in the fact that
the sum in the multiplier is taken over a larger set than in the
classical setting. In particular, the sum does not contain a
$\delta$-function, but it contains a characteristic function of the
interval $[0,1]$. Geometrically speaking, we are no longer summing
over lattice points that lie on a curve, but over lattice points that
lie near a curve. This is a general phenomenon which occurs, due to
the fact that $e^{it\Delta_Q}$ is in general no longer periodic in
time. To this sum, we apply the dyadic decomposition arguments from
\cite{GSS}. Due to the presence of the quadratic form $Q$, associated
to $\Delta_Q$, which can take irrational values, the arguments based
on the determinant of a lattice from \cite{GSS} do not apply in this
setting. We will remedy this difficulty by using a Fourier analytic
fact used in \cite{B07}, which allows us to estimate the number of
points in a set in terms of an integral
\eqref{E_tau_p_Fourier_transform_bound}. We then rewrite the integral
in the upper bound in a factorized way \eqref{E_tau_p}, keeping in
mind all of the dyadic localization of the frequencies. In certain
cases, this allows us to apply the oscillatory sum estimates given in
Subsection \ref{Estimates for oscillatory sums} below. The remaining
cases are then obtained from this one by applying the geometric
arguments from \cite{GSS}.

In proving the sharpness of the Sobolev exponents in the spacetime
estimates, we again encounter the difficulty that $e^{it\Delta_Q}$ is
no longer periodic in time.  More precisely, we need to estimate the
integral in $\tau$, the frequency variable corresponding to $t$. In
other words, we need to estimate the behavior of the spacetime Fourier
transform of the expression left-hand side of the estimate for $\tau$
taking values in an interval, rather than at a single point. In order
to prove this fact, we observe that the spacetime Fourier transform
can bounded from below by an integral of a constant function. In the
two-dimensional problem, this is seen in \eqref{tau1_integral}
below. This idea generalizes to all dimensions $d \geq 2$.

Once one proves the spacetime estimate, it is possible to use standard
arguments to deduce a conditional uniqueness result for the
Gross-Pitaevskii hierarchy with a modified Laplacian on $\Lambda$. An
additional scaling argument then allows us to show conditional
uniqueness for the Gross-Pitaevskii hierarchy on $\Lambda_d$.  We note
that, by this method, it is not immediately possible to deduce
conditional uniqueness for the Gross-Pitaevskii hierarchy on
$\Lambda_d$ from the spacetime estimate on $\Lambda$. This point is
explained in more detail in Remark
\ref{2D_spacetime_estimate_bound_Lambda2_remark} below.

\subsection{Organization of the paper} In Section \ref{Notation and
  some preliminary facts}, we will define the notation and we will
recall some useful preliminary facts from Fourier analysis. In
Subsection \ref{Density matrices and their properties}, we will define
more precisely the notions related to density matrices. In Subsection
\ref{Estimates for oscillatory sums}, we will recall several estimates
for oscillatory sums.

Section \ref{2D_problem} is devoted to the study of the
two-dimensional problem.  In Subsection \ref{The spacetime estimate in
  two dimensions}, we will prove a sharp spacetime estimate for the
free evolution operator $\mathcal{U}^{(k)}_Q(t)$ in two dimensions. A
conditional uniqueness result for the GP hierarchy on $\Lambda_2$ is
given in Subsection \ref{A conditional uniqueness result in 2D}. This
result will be used to obtain a rigorous derivation of the defocusing
cubic NLS on $\Lambda_2$ in Subsection \ref{Rigorous derivation 2D
  irrational torus}.

The three-dimensional problem is studied in Section
\ref{3D_problem}. In Subsection \ref{The spacetime estimate in three
  dimensions}, we prove a sharp spacetime estimate for
$\mathcal{U}_Q^{(k)}(t)$. This result is used in order to deduce a
conditional uniqueness result for the GP hierarchy on $\Lambda_3$ in
Subsection \ref{A conditional uniqueness result 3D}. The sharp
spacetime estimate for general dimensions is given in Remark
\ref{higher_dimensions}. An unconditional uniqueness result, which is
used to obtain a rigorous derivation of the defocusing cubic NLS on
$\Lambda_3$ is given in Subsection \ref{An unconditional uniqueness
  result 3D}.

In Section \ref{Local-in-time solutions}, we comment on the existence
of local-in-time solutions to the GP hierarchy on general $\Lambda_d$.

\section{Notation and some preliminary facts}
\label{Notation and some preliminary facts}

\subsection{Density matrices and their properties}
\label{Density matrices and their properties}
Given $f \in L^2(\Lambda_d)$, its Fourier transform is defined as
follows:
\begin{equation}
  \label{Fourier_transform_Lambda_d}
  \widehat{f}(\xi):=\int_{0}^{\frac{2 \pi}{\theta_1}} \int_{0}^{\frac{2 \pi}{\theta_2}} \cdots \int_{0}^{\frac{2 \pi}{\theta_d}} f(x^1,x^2,\ldots,x^d) \cdot e^{-ix^1 \cdot \xi^1 - ix^2 \cdot \xi^2 - \cdots - ix^d \cdot \xi^d} \,dx^d\,\cdots\,dx^2\,dx^1.
\end{equation}
Here, $\xi=(\xi^1,\xi^2,\ldots,\xi^d) \in \theta_1 \cdot \mathbb{Z}
\times \theta_2 \cdot \mathbb{Z} \times \cdots \times \theta_d \cdot
\mathbb{Z}$.  On $\Lambda_d$, we consider the standard Laplace
operator given by:
$$\Delta=\frac{\partial^2}{\partial (x^1)^2}+\frac{\partial^2}{\partial (x^2)^2}+\cdots+\frac{\partial^2}{\partial (x^d)^2}.$$
Here, $x^j$ denotes the $\mathbb{R}\,\big/\,\frac{2\pi}{\theta_j}
\mathbb{Z}$ variable.  In particular, given a function $f \in
L^2(\Lambda_d)$ such that $\Delta f \in L^2(\Lambda_d)$, and
$\xi=(\xi^1,\xi^2,\ldots,\xi^d) \in \theta_1 \cdot \mathbb{Z} \times
\theta_2 \cdot \mathbb{Z} \times \cdots \times \theta_d \cdot
\mathbb{Z}$ as above, it is the case that:
\begin{equation}
  \notag
  (\Delta f)\,\widehat{\,}\,(\xi)=(-(\xi^1)^2-(\xi^2)^2-\cdots-(\xi^d)^2) \cdot \widehat{f}(\xi),
\end{equation}
When working with density matrices of order $k$, we will use the
shorthand notation $\vec{x}_k:=(x_1,x_2,\ldots,x_k)$,
$\vec{x}'_k:=(x'_1,x'_2,\ldots,x'_k)$, where
$x_j=((x_j)^1,(x_j)^2,\ldots,(x_j)^d)\in \Lambda_d$,
$x'_j=((x'_j)^1,(x'_j)^2,\ldots,(x'_j)^d) \in \Lambda_d$.

Given $\gamma^{(k)}$, a density matrix of order $k$ on $\Lambda_d$,
its Fourier transform is defined as:

\begin{equation}
  \label{gamma_k_hat}
  (\gamma^{(k)})\,\widehat{\,}\,(\vec{\xi}_1;\vec{\xi}'_k):=\int_{\Lambda_d^k \times \Lambda_d^k} \gamma^{(k)}(\vec{x}_k;\vec{x}'_k)e^{-i \cdot \sum_{j=1}^{k}x_j \cdot \xi_j + i \cdot \sum_{j=1}^{k} x'_j \cdot \xi'_j}\,d\vec{x}_k\,d\vec{x}'_k
\end{equation}
for $\vec{\xi}_k=(\xi_1,\ldots,\xi_k),
\vec{\xi}'_k:=(\xi'_1,\ldots,\xi'_k) \in (\theta_1 \cdot \mathbb{Z}
\times \theta_2 \cdot \mathbb{Z} \times \cdots \times \theta_d \cdot
\mathbb{Z})^k$.
Furthermore, if $\gamma^{(k)}=\gamma^{(k)}(t)$ depends on time, we can
define its spacetime Fourier transform as:
\begin{equation}
  \label{gamma_k_tilde}
  (\gamma^{(k)})\,\widetilde{\,}\,(\tau,\vec{\xi}_1;\vec{\xi}'_k):=\int_{\mathbb{R}} \int_{\Lambda_d^k \times \Lambda_d^k} \gamma^{(k)}(t,\vec{x}_k;\vec{x}'_k)e^{-it\tau-i \cdot \sum_{j=1}^{k}x_j \cdot \xi_j + i \cdot \sum_{j=1}^{k} x'_j \cdot \xi'_j}\,d\vec{x}_k\,d\vec{x}'_k\,dt,
\end{equation}
for $\tau \in \mathbb{R}$ and $\vec{\xi}_k,\vec{\xi}'_k \in (\theta_1
\cdot \mathbb{Z} \times \theta_2 \cdot \mathbb{Z} \times \cdots \times
\theta_d \cdot \mathbb{Z})^k$.

The \emph{collision operator} $B_{j,k+1}$, for $j \in
\{1,2,\ldots,k\}$ is defined on density matrices of order $k+1$ by:
\begin{equation}
  \label{Bjk+1}
  B_{j,k+1} \gamma^{(k+1)}(\vec{x}_k;\vec{x}_k'):= Tr_{x_{k+1}}\, \big[\delta(x_j-x_{k+1}),\gamma^{(k+1)}\big](\vec{x}_k;\vec{x}_k')
\end{equation}
$$=\int_{\Lambda_d} dx_{k+1} \Big(\delta(x_j-x_{k+1})-\delta(x_j'-x_{k+1})\Big) \gamma^{(k+1)}(\vec{x}_k,x_{k+1};\vec{x}_k',x_{k+1}).
$$
In particular, $B_{j,k+1}\gamma^{(k+1)}$ is a density matrix of order
$k$. We can write the above difference as $B_{j,k+1}^{+}
\gamma^{(k+1)}-B_{j,k+1}^{-}\gamma^{(k+1)}$. In this way, we define
the operators $B_{j,k+1}^{+}$ and $B_{j,k+1}^{-}$. More precisely,
\begin{equation}
  \label{B+}
  B_{j,k+1}^{+} \gamma^{(k+1)}(\vec{x}_k;\vec{x}_k'):=\int_{\Lambda_d} dx_{k+1} \,\delta(x_j-x_{k+1}) \gamma^{(k+1)}(\vec{x}_k,x_{k+1};\vec{x}_k',x_{k+1})
\end{equation}
and
\begin{equation}
  \label{B-}
  B_{j,k+1}^{-} \gamma^{(k+1)}(\vec{x}_k;\vec{x}_k'):=\int_{\Lambda_d} dx_{k+1} \,\delta(x_j'-x_{k+1}) \gamma^{(k+1)}(\vec{x}_k,x_{k+1};\vec{x}_k',x_{k+1}).
\end{equation}
Given $\alpha \in \mathbb{R}$, we define the \emph{differentiation
  operator} $S^{(k,\alpha)}$ on density matrices of order $k$ on
$\Lambda_d$ by:
\begin{equation}
  \label{Skalpha}
  (S^{(k,\alpha)}\gamma^{(k)})\,\widehat{\,}\,\,(\vec{\xi}_k;\vec{\xi'}_k):=\prod_{j=1}^{k} \langle \xi_j \rangle^{\alpha} \cdot \prod_{j=1}^{k} \langle \xi'_j \rangle^{\alpha} \cdot\widehat{\gamma}^{(k)}(\vec{\xi}_k;\vec{\xi'}_k),
\end{equation}
whenever $\vec{\xi}_k, \vec{\xi}'_k \in (\theta_1 \cdot \mathbb{Z}
\times \theta_2 \cdot \mathbb{Z} \times \cdots \times \theta_d \cdot
\mathbb{Z})^k$. Here, we use the convention for the Japanese bracket
given by:
\begin{equation}
  \notag
  \langle x \rangle:=\sqrt{1+|x|^2}. 
\end{equation}

Using $\Delta$, we define $\mathcal{U}^{(k)}(t)$ by:
\begin{equation*}
  \mathcal{U}^{(k)}(t) \,\gamma^{(k)}:=e^{it \sum_{j=1}^{k} \Delta_{x_j}} \gamma^{(k)} e^{-it\sum_{j=1}^{k} \Delta_{x'_j}}.
\end{equation*}
This operator corresponds to the \emph{free Schr\"{o}dinger evolution}
on density matrices.

In the remainder of this section, we will rescale the torus
$\Lambda_d$ to the classical torus $\Lambda=\mathbb{T}^d=(\mathbb{R}
\,\big/\,2\pi\mathbb{Z})^d$. We will henceforth work on the simpler
domain $\Lambda$ at the expense of working with a modified Laplacian
operator as it was done in the context of the NLS in \cite{B07}.  More
precisely, we will consider the domain
$\Lambda=\mathbb{T}^d=(\mathbb{R}/2\pi\mathbb{Z})^d$ with the modified
Laplacian operator:
\begin{equation}
  \notag
  \Delta_Q=\theta_1^2 \cdot \frac{\partial^2}{\partial (x^1)^2}+\theta_2^2 \cdot \frac{\partial^2}{\partial (x^2)^2} + \cdots + \theta_d^2 \cdot \frac{\partial^2}{\partial (x_d)^2}.
\end{equation}
Here, we write the spatial variable as $x=(x^1,x^2,\ldots,x^d) \in
\mathbb{T}^d$.

The action of $\Delta_Q$ on the Fourier side is given by:

\begin{equation}
  \label{Delta_Q}
  (\Delta_Q g)\,\widehat{\,}\,(\xi):=(-\theta_1^2 \cdot (\xi^1)^2 -\theta_2^2 \cdot (\xi^2)^2-\cdots-\theta_d^2 \cdot (\xi_d)^2) \cdot \widehat{g}(\xi)
\end{equation}
for $\xi=(\xi^1,\xi^2,\ldots,\xi^d) \in \mathbb{Z}^d$.  In
\eqref{Delta_Q}, $\,\widehat{\cdot}\,$ denotes the Fourier transform
on $\Lambda$, i.e.
\begin{equation}
  \label{Fourier_transform_Lambda}
  \widehat{g}(\xi)=\int_{0}^{2\pi} \int_{0}^{2\pi} \cdots \int_{0}^{2\pi} f(x^1,x^2,\ldots,x^d) \cdot e^{-ix^1 \cdot \xi^1 - ix^2 \cdot \xi^2 - \cdots - ix^d \cdot \xi^d} \,dx^d\,\cdots\,dx^2\,dx^1,
\end{equation}
whenever $g \in L^2(\Lambda)$. Let us note that the Fourier transform
on $\Lambda_d$, given in \eqref{Fourier_transform_Lambda_d}, and the
Fourier transform on $\Lambda$, given in
\eqref{Fourier_transform_Lambda}, are denoted in the same way. From
context, it will be possible to distinguish which Fourier transform we
are considering.

Similarly, the analogues on $\Lambda$ of the operations given in
\eqref{gamma_k_hat}, \eqref{gamma_k_tilde}, \eqref{Bjk+1}, and
\eqref{Skalpha} are defined by simply setting
$\theta_1=\theta_2=\cdots=\theta_d=1$ in the definitions given
above. In what follows, we will denote the operations of Fourier
transform, spacetime Fourier transform, collision, and fractional
differentiation acting on density matrices on $\Lambda_d$ and on
$\Lambda$ in the same way. It will typically be clear from context
which operation we are using.

We associate to $\Delta_Q$ the operator $\mathcal{U}_Q^{(k)}(t)$,
which acts on density matrices of order $k$ on $\Lambda$ by:

\begin{equation}
  \label{Uk_Q}
  \mathcal{U}_Q^{(k)}(t) \, \gamma^{(k)}:=e^{it\sum_{j=1}^{k}\Delta_{Q,x_j}}\, \gamma^{(k)} \,e^{-it \sum_{j=1}^{k} \Delta_{Q,x'_j}}.
\end{equation}
Here, $\Delta_{Q,x_j}$ denotes the operator $\Delta_Q$ acting in the
$x_j$ variable and $\Delta_{Q,x'_j}$ denotes the operator $\Delta_Q$
acting in the $x'_j$ variable. Let us note that, unlike
$\,\mathcal{U}^{(k)}(t)$, \emph{the operator $\mathcal{U}^{(k)}_Q(t)$
  is in general not periodic in time}.

We will now define a method of rescaling density matrices. Given $k
\in \mathbb{N}$ and $\gamma^{(k)}:\Lambda^k \times \Lambda^k
\rightarrow \mathbb{C}$, we define
$\widetilde{\gamma}^{(k)}:\Lambda_d^k \times \Lambda_d^k \rightarrow
\mathbb{C}$ as:
\begin{equation}
  \label{rescaling}
  \widetilde{\gamma}^{(k)}(x_1,x_2,\ldots,x_k;x'_1,x'_2,\ldots,x'_k):=\gamma^{(k)}(\widetilde{x}_1,\widetilde{x}_2,\ldots,\widetilde{x}_k;\widetilde{x}'_1,\widetilde{x}'_2,\ldots,\widetilde{x}'_k)
\end{equation}
where, for $j \in \mathbb{N}$, given $x_j, x'_j \in \Lambda_d$, we
define:
\begin{equation}
  \notag
  \widetilde{x}_j:=\big(\theta_1 \cdot (x_j)^1,\theta_2 \cdot (x_j)^2, \cdots, \theta_d \cdot (x_j)^d \big) \in \Lambda
\end{equation}
and
\begin{equation}
  \notag
  \widetilde{x}'_j:=\big(\theta_1 \cdot (x'_j)^1,\theta_2 \cdot (x'_j)^2, \cdots, \theta_d \cdot (x'_j)^d \big) \in \Lambda.
\end{equation}
The mapping defined in \eqref{rescaling} gives a bijection between
density matrices of order $k$ on $\Lambda$ and on $\Lambda_d$.  By the
Chain Rule, it follows that, with the above rescaling:
\begin{equation}
  \label{Delta_Delta_Q1}
  \Big((\Delta_{\vec{x}_k}-\Delta_{\vec{x}'_k})\widetilde{\gamma}^{(k)}\Big)(\vec{x}_k;\vec{x}'_k)\\
  = \Big((\Delta_{Q,\vec{x}_k}-\Delta_{Q,\vec{x}'_k})\gamma^{(k)}\Big)(\widetilde{x}_1,\ldots,\widetilde{x}_k;\widetilde{x}'_1,\ldots,\widetilde{x}'_k).
\end{equation}
Here, $\Delta_{Q,\vec{x}_k}:=\sum_{j=1}^{k}\Delta_{Q,x_j}$ and
$\Delta_{Q,\vec{x}'_k}:=\sum_{j=1}^{k}\Delta_{Q,x'_j}$.

We can take Fourier transforms of both sides of \eqref{rescaling} and
obtain that, for all $\vec{\xi}_k=(\xi_1,\ldots,\xi_k),
\vec{\xi}'_k:=(\xi'_1,\ldots,\xi'_k) \in (\theta_1 \cdot \mathbb{Z}
\times \theta_2 \cdot \mathbb{Z} \times \cdots \times \theta_d \cdot
\mathbb{Z})^k$:
\begin{equation}
  \label{Fourier_transform_gamma_tilde}
  \widehat{\widetilde{\gamma}^{(k)}}
  (\vec{x}_k;\vec{x}'_k)=\frac{1}{\theta_1^{2k} \cdot \theta_2^{2k} \cdots \theta_d^{2k}} \cdot  \widehat{\gamma^{(k)}}(\widetilde{\xi}_1,\widetilde{\xi}_2,\ldots,\widetilde{\xi}_k;\widetilde{\xi}'_1,\widetilde{\xi}'_2,\ldots,\widetilde{\xi}'_k).
\end{equation}
Here,
\begin{equation}
  \label{rescaling_frequency1}
  \widetilde{\xi}_j:=\Big(\frac{1}{\theta_1} \cdot (\xi_j)^1,\frac{1}{\theta_2} \cdot (\xi_j)^2,\cdots, \frac{1}{\theta_d} \cdot (\xi_j)^d\Big) \in \mathbb{Z}^d
\end{equation}
and
\begin{equation}
  \label{rescaling_frequency2}
  \widetilde{\xi}'_j:=\Big(\frac{1}{\theta_1} \cdot (\xi'_j)^1,\frac{1}{\theta_2} \cdot (\xi'_j)^2, \cdots, \frac{1}{\theta_d} \cdot (\xi'_j)^d \Big) \in \mathbb{Z}^d.
\end{equation}
Here, and throughout the paper, we use the notational convention
\[\xi_j=\big((\xi_j)^1,(\xi_j)^2, \ldots, (\xi_j)^d\big) \text{ and }
\xi'_j=\big((\xi'_j)^1,(\xi'_j)^2, \ldots, (\xi'_j)^d \big).\]

In addition to \eqref{GP1}, we will also study the
\emph{Gross-Pitaevskii hierarchy on the spatial domain $\Lambda$ with
  a modified Laplacian:}
\begin{equation}
  \label{GP}
  \begin{cases}
    i \partial_t \gamma^{(k)}+(\Delta_{Q,\vec{x}_k}-\Delta_{Q,\vec{x}'_k}) \gamma^{(k)}=b_0 \cdot \sum_{j=1}^{k}B_{j,k+1}(\gamma^{(k+1)})\\
    \gamma^{(k)}|_{t=0}=\gamma^{(k)}_0.
  \end{cases}
\end{equation}
Here, $\gamma^{(k)} : \mathbb{R}_t \times \Lambda^k \times \Lambda^k
\rightarrow \mathbb{C}$ and $\gamma^{(k)}_0 : \Lambda^k \times
\Lambda^k \rightarrow \mathbb{C}$. As above, $b_0 \in \mathbb{R}$ is a
non-zero coupling constant. As in \eqref{GP1}, we say that \eqref{GP}
is \emph{defocusing} if $b_0>0$, and we say that it is \emph{focusing}
if $b_0<0$.

Suppose that $(\gamma^{(k)})_k=(\gamma^{(k)}(t))_k$ solves
\eqref{GP}. For each $k \in \mathbb{N}$ and for each time $t$, we
define $\widetilde{\gamma}^{(k)}(t)$ from $\gamma^{(k)}(t)$ according
to \eqref{rescaling},
i.e. \[\widetilde{\gamma}^{(k)}(t,\vec{x}_k;\vec{x}'_k):=\gamma^{(k)}(t,\widetilde{x}_1,\ldots,\widetilde{x}_k;\widetilde{x}'_1,\ldots,\widetilde{x}'_k).\]
A direct calculation then shows that, for all $k \in \mathbb{N}$, and
for all $j \in \{1,2,\ldots,k\}$:
\begin{equation}
  \label{Bjk_gamma_gamma_tilde}
  \big[B_{j,k+1} (\gamma^{(k+1)})\big](t,\vec{x}_k;\vec{x}'_k)=
  \big[B_{j,k+1} (\widetilde{\gamma}^{(k+1)})\big](t,\widetilde{x}_1,\ldots,\widetilde{x}_k;\widetilde{x}'_1,\ldots,\widetilde{x}'_k).
\end{equation}
From \eqref{Delta_Delta_Q1} and \eqref{Bjk_gamma_gamma_tilde}, we
obtain:

\begin{lemma}
  \label{correspondence}
  $(\widetilde{\gamma}^{(k)})_k$ solves \eqref{GP1} if and only if
  $(\gamma^{(k)})_k$ solves \eqref{GP}.
\end{lemma}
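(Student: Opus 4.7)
The plan is a direct verification that hinges on the two transformation identities already established in the excerpt, namely \eqref{Delta_Delta_Q1} and \eqref{Bjk_gamma_gamma_tilde}, together with the fact that the rescaling in \eqref{rescaling} is purely spatial and is a bijection between density matrices of order $k$ on $\Lambda$ and on $\Lambda_d$.

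First I would fix $k \in \mathbb{N}$, $t \in \mathbb{R}$, $(\vec{x}_k;\vec{x}'_k) \in \Lambda_d^k \times \Lambda_d^k$, and evaluate the left-hand side of \eqref{GP1} at $\widetilde{\gamma}^{(k)}$. Since the map $x_j \mapsto \widetilde{x}_j$ does not involve $t$, the time derivative commutes with the rescaling, giving
\begin{equation*}
\bigl(i\partial_t \widetilde{\gamma}^{(k)}\bigr)(t,\vec{x}_k;\vec{x}'_k) = \bigl(i\partial_t \gamma^{(k)}\bigr)(t,\widetilde{x}_1,\ldots,\widetilde{x}_k;\widetilde{x}'_1,\ldots,\widetilde{x}'_k).
\end{equation*}
Applying \eqref{Delta_Delta_Q1} to the Laplacian term and \eqref{Bjk_gamma_gamma_tilde} to each of the $k$ summands on the right, the evaluation of the equation \eqref{GP1} for $\widetilde{\gamma}^{(k)}$ at $(\vec{x}_k;\vec{x}'_k)$ becomes exactly the evaluation of the equation \eqref{GP} for $\gamma^{(k)}$ at the rescaled point $(\widetilde{x}_1,\ldots,\widetilde{x}_k;\widetilde{x}'_1,\ldots,\widetilde{x}'_k)$. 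The same reasoning applies term by term to the initial condition, since the rescaling is time-independent.

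For the equivalence, I would observe that as $(\vec{x}_k;\vec{x}'_k)$ ranges over $\Lambda_d^k \times \Lambda_d^k$, the rescaled point $(\widetilde{x}_1,\ldots,\widetilde{x}_k;\widetilde{x}'_1,\ldots,\widetilde{x}'_k)$ ranges bijectively over $\Lambda^k \times \Lambda^k$. Hence the pointwise identity above shows that \eqref{GP1} holds for $\widetilde{\gamma}^{(k)}$ on $\Lambda_d^k \times \Lambda_d^k$ if and only if \eqref{GP} holds for $\gamma^{(k)}$ on $\Lambda^k \times \Lambda^k$, and this equivalence holds simultaneously for all $k$.

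Since the argument is essentially a repeated substitution using identities already derived in the excerpt, there is no real obstacle; the only point requiring a little care is to treat the collision term \eqref{Bjk_gamma_gamma_tilde} for each $j \in \{1,\ldots,k\}$ and to check that both directions of the "if and only if" follow from the bijectivity of the rescaling map.
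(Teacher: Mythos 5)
Your proposal is correct and matches the paper's argument: the paper also derives Lemma \ref{correspondence} directly from the identities \eqref{Delta_Delta_Q1} and \eqref{Bjk_gamma_gamma_tilde} together with the bijectivity of the rescaling \eqref{rescaling}. The only additions you make — noting that $\partial_t$ commutes with the purely spatial rescaling and spelling out the bijectivity argument for the equivalence — are small details the paper leaves implicit.
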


Hence, by using the rescaling \eqref{rescaling}, we obtain a
\emph{correspondence between solutions of \eqref{GP1} and \eqref{GP}.}

Given $\xi=(\xi^1,\xi^2, \ldots, \xi^d),
\eta=(\eta^1,\eta^2,\ldots,\eta^d) \in \mathbb{Z}^d$, we define
\begin{equation}
  \notag
  Q(\xi,\eta):=\theta_1^2 \cdot \xi^1 \cdot \eta^1 + \theta_2^2 \cdot \xi^2 \cdot \eta^2 + \cdots + \theta_d^2 \cdot \xi^d \cdot \eta^d.
\end{equation}

In the sequel, we will use the abbreviated notation:
\begin{equation}
  \notag
  Q(\xi):=Q(\xi,\xi).
\end{equation}
In other words, we can write \eqref{Delta_Q} as:

\begin{equation}
  \notag
  (\Delta_Q f)\,\widehat{\,}\,(\xi)=-Q(\xi) \cdot \widehat{f}(\xi)
\end{equation}
for $\xi \in \mathbb{Z}^d$.  Moreover, if
$\vec{\xi}_k=(\xi_1,\ldots,\xi_k) \in (\mathbb{Z}^{d})^k$, we define
$Q(\vec{\xi}_k):=\sum_{j=1}^{k}Q(\xi_j)$.

In the paper, we will primarily focus on the case when $d=2$ and
$d=3$.  The two-dimensional problem will be considered in Section
\ref{2D_problem} and the three-dimensional problem will be considered
in Section \ref{3D_problem}.  A statement for general $d \geq 2$ is
given in Remark \ref{higher_dimensions}, Remark
\ref{higher_dimensions_uniqueness}, and in Section \ref{Local-in-time
  solutions} below.  Moreover, we will primarily be interested in the
case when $\Lambda_d$ is a irrational torus in the sense that it is
not possible to use a rescaling argument and apply the results from
the setting of the classical torus $\T^d$ obtained in \cite{GSS,KSS}.

\subsection{Estimates for oscillatory sums}
\label{Estimates for oscillatory sums}
Let us first recall a simple and well-known $L^4$-bound, cp.~\cite[Lemma 3.2]{BGT05}, as well as \cite{B07,H13} for related results:
\begin{lemma}
  \label{lem:l4}
  Let $I\subset \mathbb{R}$ be a bounded interval and let
  $\epsilon>0$. There exists $c>0$, depending on $\epsilon$, such that
  for all $b \in \mathbb{Z}$ and $N\in \mathbb{N}$,
  \[
  \int_I \Big| \sum_{m \in [b,b+N)\cap \mathbb{Z}}e^{it
    m^2}\Big|^4dt\leq c N^{2+\epsilon}
  \]
\end{lemma}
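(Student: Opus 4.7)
The plan is to reduce the $L^4$ bound to counting integer solutions of the equation $m_1^2+m_2^2=m_3^2+m_4^2$ and then invoke the classical divisor bound for sums of two squares.

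First, I would use the $2\pi$-periodicity of $t\mapsto e^{itm^2}$, and hence of the integrand, to cover the bounded interval $I$ by $O(|I|)$ translates of $[0,2\pi]$. This reduces the task to bounding
\begin{equation*}
\int_0^{2\pi} \Bigl|\sum_{m \in [b,b+N)\cap \mathbb{Z}} e^{itm^2}\Bigr|^4 dt.
\end{equation*}
Expanding the fourth power and applying the orthogonality relation $\int_0^{2\pi}e^{itn}\,dt=2\pi\delta_{n,0}$, this integral equals $2\pi\cdot \#\mathcal{Q}$, where
\begin{equation*}
\mathcal{Q}:=\bigl\{(m_1,m_2,m_3,m_4)\in ([b,b+N)\cap \mathbb{Z})^4:\ m_1^2+m_2^2=m_3^2+m_4^2\bigr\},
\end{equation*}
so it remains to prove $\#\mathcal{Q}\leq C_\epsilon N^{2+\epsilon}$ uniformly in $b\in\mathbb{Z}$.

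To handle the $b$-dependence, I would reparametrize $m_i=b+j_i$ with $j_i\in [0,N)\cap \mathbb{Z}$, so that the equation defining $\mathcal{Q}$ becomes
\begin{equation*}
(j_1^2+j_2^2-j_3^2-j_4^2)+2b(j_1+j_2-j_3-j_4)=0.
\end{equation*}
If $|b|>N^2$, then $|j_1^2+j_2^2-j_3^2-j_4^2|<2N^2$ forces the coefficient $j_1+j_2-j_3-j_4$ of $2b$ to vanish; combined with $j_1^2+j_2^2=j_3^2+j_4^2$ this gives $\{j_1,j_2\}=\{j_3,j_4\}$, producing only $O(N^2)$ quadruples. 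If instead $|b|\leq N^2$, then $|m_i|\leq 2N^2$, so $n:=m_1^2+m_2^2\leq 8N^4$; for each of the $N^2$ pairs $(m_1,m_2)$, the number of $(m_3,m_4)\in \mathbb{Z}^2$ with $m_3^2+m_4^2=n$ equals the representation function $r_2(n)$, and the classical divisor-function bound $r_2(n)\leq C_\delta\, n^\delta$, applied with $\delta=\epsilon/4$, yields $\#\mathcal{Q}\leq C_\epsilon N^{2+\epsilon}$.

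The main obstacle is the uniformity in $b$: a naive application of the divisor bound on $m_1^2+m_2^2\leq 2(|b|+N)^2$ would pick up an undesired factor $(|b|+N)^\epsilon$, and the case split $|b|\gtrless N^2$ is exactly what absorbs that dependence. All the remaining ingredients—periodicity in $t$, orthogonality on $[0,2\pi]$, and $r_2(n)=O_\epsilon(n^\epsilon)$—are classical.
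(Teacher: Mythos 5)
Your proof is correct, and its overall structure mirrors the paper's: reduce by $2\pi$-periodicity to one period, convert the $L^4$ norm into a lattice-point count by orthogonality, split on the size of $|b|$, and invoke a classical $n^{\epsilon}$ growth bound. The one genuine difference is the arithmetic input at the end. After expanding $\bigl|\sum e^{itm^2}\bigr|^2$ and applying Plancherel, the paper works with the equation $m_1^2-m_2^2=l$, substitutes $k_1=m_1-m_2$, $k_2=m_1+m_2-2b$ to get the factorization $k_1(k_2+2b)=l$, and then applies the divisor bound $d(l)=O_\delta(l^\delta)$ from Hardy--Wright. You instead expand the fourth power fully to count solutions of $m_1^2+m_2^2=m_3^2+m_4^2$ and appeal to the sum-of-two-squares representation bound $r_2(n)=O_\delta(n^\delta)$. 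These two counts are the same (relabel $m_2\leftrightarrow m_4$), and both bounds are classical consequences of the same divisor estimate, so the two routes are essentially equivalent; yours avoids the factorization/change-of-variables step at the cost of invoking $r_2$ instead of $d$. Your case split at $|b|>N^2$ (forcing $j_1+j_2=j_3+j_4$ and hence $\{j_1,j_2\}=\{j_3,j_4\}$) plays exactly the role of the paper's Case 2 with $|b|>10N^2$.
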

\begin{proof}
  Without loss it suffices to consider $I=[0,2\pi]$. Then, by
  Plancherel,
  \begin{align*}
    L:=&\int_I \Big| \sum_{m \in [b,b+N)\cap \mathbb{Z}}e^{it
      m^2}\Big|^4dt=\Big\| \sum_{m_1,m_2 \in [b,b+N)\cap
      \mathbb{Z}}e^{it
      (m_1^2-m_2^2)}\Big\|_{L^2([0,2\pi])}^2\\
    =&\sum_{l} \Big|\sum_{m_1,m_2 \in [b,b+N)\cap \mathbb{Z} \atop
      m_1^2-m_2^2=l} 1\Big|^2
  \end{align*}
  Since there are at most $N^2$ different values for $l$ such that
  $m_1^2-m_2^2=l$ has a solution $(m_1,m_2)$ in the given interval, we
  obtain
  \[
  L\leq N^2 \sup_l S_{l,b}(N)^2,\] where
  \[S_{l,b}(N)=\#\{(m_1,m_2)\in [b,b+N)^2:(m_1-m_2)(m_1+m_2)=l \}.
  \]
  Hence, it remains to give a uniform estimate for
  $S_{l,b}(N)$. Setting $k_1:=m_1-m_2$, $k_2:=m_1+m_2-2b$, we observe
  that
  \[
  S_{l,b}(N) =\#\{(k_1,k_2)\in [-N,N)\times[0,2N):k_1(k_2+2b)=l \}.
  \]
  We discuss two cases separately.

  {\it Case 1:} $-10N^2\leq b \leq 10N^2$. Then, if solutions exist,
  we must have $l \leq cN^3$, and the classical estimate on the
  number-of-divisors function $d$ \cite[Thm.~315]{HW54} yields
  \[
  S_{l,b}(N)\leq d(l)\leq c l^\delta\leq c N^{3\delta},
  \]
  for any $\delta>0$.

  {\it Case 2:} $|b|>10N^2$. In this case, for any given $l$, there is
  at most one solution to $k_1(k_2+2b)=l$. Indeed, suppose that
  $(k_1,k_2)$ and $(k_1',k_2')$ are two distinct solutions, hence
  $k_1\ne k_1'$. Then,
  \[
  k_1k_2-k_1'k_2'=2b(k_1'-k_1) \Rightarrow |k_1k_2-k_1'k_2'|>20N^2,
  \]
  a contradiction to $|k_1k_2-k_1'k_2'|\leq 4N^2$.
\end{proof}

\begin{corollary}
  \label{cor:lp} Let $p \geq 4$, $\epsilon>0$ and $I\subset
  \mathbb{R}$ be a bounded interval.  There exists $c>0$, depending on
  $\epsilon$, such that for all $b \in \mathbb{Z}$ and $N\in
  \mathbb{N}$,
  \[
  \int_I \Big| \sum_{m \in [b,b+N)\cap \mathbb{Z}}e^{it m^2}\Big|^p
  dt\leq c N^{p-2+\epsilon}.
  \]
\end{corollary}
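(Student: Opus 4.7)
The proof proposal is to reduce this to the $L^4$ bound of Lemma \ref{lem:l4} by a trivial interpolation, using the pointwise bound on the exponential sum.

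First I would observe that the exponential sum
\[
S_{b,N}(t) := \sum_{m \in [b, b+N) \cap \mathbb{Z}} e^{itm^2}
\]
satisfies the trivial pointwise estimate $|S_{b,N}(t)| \leq N$ for every $t \in \mathbb{R}$, simply because it is a sum of $N$ terms each of modulus $1$. This gives me a free uniform $L^{\infty}$ bound to interpolate against the $L^4$ bound already established.

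Next, for any $p \geq 4$, I would split the integrand as
\[
|S_{b,N}(t)|^p = |S_{b,N}(t)|^{p-4} \cdot |S_{b,N}(t)|^4 \leq N^{p-4} \cdot |S_{b,N}(t)|^4,
\]
where the pointwise bound is applied only to the first factor (which makes sense since $p-4 \geq 0$). Integrating over $I$ and invoking Lemma \ref{lem:l4} with the chosen $\epsilon > 0$ yields
\[
\int_I |S_{b,N}(t)|^p \, dt \leq N^{p-4} \int_I |S_{b,N}(t)|^4 \, dt \leq N^{p-4} \cdot c\, N^{2+\epsilon} = c\, N^{p-2+\epsilon},
\]
with the same constant $c$ (depending on $\epsilon$ and $I$) as in Lemma \ref{lem:l4}.

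There is essentially no obstacle here: the argument is a one-line interpolation between the trivial $L^\infty$ bound and the nontrivial $L^4$ bound. The only thing to note is that uniformity in $b \in \mathbb{Z}$ is preserved because Lemma \ref{lem:l4} is already uniform in $b$, and the trivial bound $|S_{b,N}(t)| \leq N$ is independent of $b$ as well. Hence no additional number-theoretic input beyond Lemma \ref{lem:l4} is needed.
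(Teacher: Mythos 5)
Your argument is correct and is exactly the paper's proof: bound $|S_{b,N}(t)|\le N$ pointwise, pull out $N^{p-4}$, and apply Lemma~\ref{lem:l4} to the remaining $L^4$ integral. The paper phrases this as ``interpolation'' between the $L^\infty$ and $L^4$ bounds, which is precisely the one-line splitting you wrote out.
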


\begin{proof}
  Let us note that, for all $b \in \mathbb{R}$ and $N \in \mathbb{N}$,
  it is obvious that
  \begin{equation*}
    \Big\|\sum_{m \in [b,b+N) \cap \mathbb{Z}} e^{itm^2}\Big\|_{L^\infty_t(I)} \leq N.
  \end{equation*}
  The corollary now follows from Lemma \ref{lem:l4} and interpolation.
\end{proof}
We remark that in the case $p>4$ the result of Corollary \ref{cor:lp}
holds with $\epsilon=0$, see \cite{B07,BGT05,H13}, but we do not need
this here.

\section{The two-dimensional problem}
\label{2D_problem}

In this section, we consider the two-dimensional problem. In other
words, we fix $d=2$ in the notation given above. Throughout the
section, $\Lambda$ will denote the two-dimensional classical torus
$\mathbb{T}^2$.  In Subsection \ref{The spacetime estimate in two
  dimensions}, we will prove a sharp spacetime estimate for the free
evolution $\mathcal{U}_Q^{(k)}(t)$ associated to the classical torus
$\Lambda=\mathbb{T}^2$. We will use this result and a scaling argument
to prove a conditional uniqueness result on $\Lambda_2$ in Subsection
\ref{A conditional uniqueness result in 2D}. Finally, in Subsection
\ref{Rigorous derivation 2D irrational torus}, we will use the
conditional uniqueness result in order to obtain a rigorous derivation
of the defocusing cubic NLS on the two-dimensional general rectangular
torus, as was done in the setting of the two-dimensional classical
torus in \cite{KSS}.

\subsection{The spacetime estimate in two dimensions}
\label{The spacetime estimate in two dimensions}

The following spacetime estimate, which extends a result of
\cite{KSS}, will be key in our analysis:
\begin{proposition}
  \label{2D_spacetime_estimate}
  Let $\alpha>\frac{1}{2}$ be given. There exists $C>0$, which depends
  only on $\alpha, \theta_1, \theta_2$ such that, for all $k \in
  \mathbb{N}$, and for all $\gamma_0^{(k+1)}: \Lambda^{k+1} \times
  \Lambda^{k+1} \rightarrow \mathbb{C}$, the following estimate holds:
  \begin{equation}
    \label{2D_spacetime_estimate_bound}
    \|S^{(k,\alpha)} B_{j,k+1} \,\mathcal{U}_Q^{(k+1)}(t)\,\gamma_0^{(k+1)}\|_{L^2([0,1] \times \Lambda^k \times \Lambda^k)} \leq C \|S^{(k+1,\alpha)}\gamma_0^{(k+1)}\|_{L^2(\Lambda^{k+1} \times \Lambda^{k+1})}.
  \end{equation}
\end{proposition}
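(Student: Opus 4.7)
The plan is to follow the Fourier-side reduction to a pointwise multiplier estimate, as in \cite{KSS,GSS}, but adapted to the fact that on $\Lambda = \mathbb{T}^2$ equipped with $\Delta_Q$ the evolution $\mathcal{U}_Q^{(k+1)}(t)$ is in general not time-periodic, so the dual variable $\tau$ ranges over $\mathbb{R}$ rather than $\mathbb{Z}$. By symmetry, it suffices to bound $B_{j,k+1}^{+}$. Multiplying by a smooth cutoff $\chi \in C_c^\infty(\mathbb{R})$ of $[0,1]$, squaring the left-hand side and applying spacetime Plancherel, the relevant spacetime Fourier transform becomes
\[
\sum_{\eta \in \mathbb{Z}^2} \widehat{\chi}\bigl(\tau - \Phi_{\vec{\xi}_k,\vec{\xi}'_k}(\eta)\bigr)\,\widehat{\gamma}_0^{(k+1)}\bigl(\xi_1,\ldots,\xi_j-\eta,\ldots,\xi_k,\eta;\vec{\xi}'_k,\eta\bigr),
\]
with $\Phi_{\vec{\xi}_k,\vec{\xi}'_k}(\eta) = Q(\xi_j-\eta)+Q(\eta)-Q(\xi_j)+c(\vec{\xi}_k,\vec{\xi}'_k)$ a quadratic polynomial in $\eta$. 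Since $\widehat{\chi}$ is Schwartz, it may be replaced by the characteristic function of $|\tau - \Phi| \lesssim 1$ up to an absolutely summable tail. A Cauchy--Schwarz in $\eta$ against the weights $\langle \xi_j - \eta\rangle^{-\alpha}\langle \eta\rangle^{-\alpha}$ coming from comparing $S^{(k,\alpha)}$ with $S^{(k+1,\alpha)}$ then reduces \eqref{2D_spacetime_estimate_bound} to the uniform pointwise estimate
\[
I(\tau, p) := \sum_{\eta \in E(\tau,p) \cap \mathbb{Z}^2} \frac{1}{\langle \eta\rangle^{2\alpha}\,\langle p-\eta\rangle^{2\alpha}} \leq C,
\]
where $p = \xi_j \in \mathbb{Z}^2$ and $E(\tau,p) = \{\eta \in \mathbb{R}^2 : |\tau - Q(p-\eta) - Q(\eta) + Q(p)| \leq 1\}$ is, after completing the square in $\eta$, a thin annular neighbourhood of an ellipse.

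The next step is a dyadic decomposition. Writing $\alpha = \tfrac12 + \delta$ with $\delta>0$ and decomposing $|\eta|\sim N$, $|p-\eta|\sim M$, the bound $I(\tau,p)\leq C$ follows once one establishes
\[
\#\bigl(E(\tau,p) \cap \{|\eta|\sim N,\,|p-\eta|\sim M\}\bigr) \lesssim (NM)^{1-2\delta'}
\]
for some $\delta'=\delta'(\delta)>0$, since the weights then provide a geometric series in $(N,M)$. In the rational case $\theta_1=\theta_2$ this would be the classical count of lattice points on an ellipse via divisor or determinant estimates. In the irrational case, however, $E(\tau,p)\cap\mathbb{Z}^2$ is a lattice in a thin neighbourhood of a conic, not on the conic itself, so those arguments are unavailable.

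To overcome this, I would use the Fourier device from \cite{B07}: since $\mathbf{1}_{[-1,1]}(s)\lesssim \int_{\mathbb{R}}|\phi(t)|\,e^{its}\,dt$ for a fast-decaying $\phi$, the count above is controlled by
\[
\int_{\mathbb{R}} |\phi(t)|\,\Bigl|\sum_{\eta \in \text{dyadic box}} e^{it(\tau + Q(p) - Q(p-\eta) - Q(\eta))}\Bigr|\,dt.
\]
Because $\Delta_Q$ is diagonal, $Q(p-\eta)+Q(\eta)-Q(p)$ is a sum of two one-variable quadratics in the Cartesian coordinates of $\eta$, and completing the square factorizes the inner sum as a product of two one-dimensional oscillatory sums of the form $\sum_{m \in [b,b+N)\cap\mathbb{Z}}e^{itm^2}$ to which Corollary \ref{cor:lp} applies. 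After H\"older in $t$, the product of two such $L^p$-bounds yields precisely the counting inequality claimed above; the condition $\alpha>\tfrac12$ is exactly what is needed to absorb the $\epsilon$-loss from Corollary \ref{cor:lp} and to close the geometric series in $(N,M)$.

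The main obstacle will be the degenerate dyadic regimes in which the linear part of $Q(p-\eta)+Q(\eta)-Q(p)$ along some coordinate dominates the quadratic part (e.g.\ $|p|$ much larger than $N, M$, or $M\gg N$). Along such a coordinate the oscillatory sum has a nearly linear phase and Corollary \ref{cor:lp} gives no gain, so the factor must be replaced by a direct geometric count of lattice points in a thin tilted strip; this is precisely where the determinant arguments of \cite{GSS} would be invoked in the rational setting but must now be replaced by a case-by-case analysis in terms of the relative sizes of $|p|$, $N$, $M$, and the ratio $\theta_1/\theta_2$. Interpolating between the oscillatory bound and the trivial geometric bound in each sub-case, and tracking the constants so they depend only on $\alpha,\theta_1,\theta_2$ and not on $\tau, p$, is the tedious but essential bookkeeping that makes the uniform bound $I(\tau,p)\leq C$ possible.
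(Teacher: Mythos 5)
Your Fourier-side computation of the collision operator is incorrect, and this error propagates through the whole proposal. The operator $B_{j,k+1}^{+}$ does not produce a single new frequency variable: on the Fourier side, writing out $\gamma^{(k+1)}(\vec{x}_k,x_j;\vec{x}'_k,x_j)$, the $(k+1)$-th frequencies $\xi_{k+1}$ (left) and $\xi'_{k+1}$ (right) remain independent, so the result is a \emph{double} sum over $(\xi_{k+1},\xi'_{k+1})\in\mathbb{Z}^2\times\mathbb{Z}^2$ with the shifted argument $\xi_j-\xi_{k+1}+\xi'_{k+1}$ in the $j$-th slot. Your single-variable formula $\widehat{\gamma}_0^{(k+1)}(\xi_1,\ldots,\xi_j-\eta,\ldots,\xi_k,\eta;\vec{\xi}'_k,\eta)$ in effect identifies $\xi_{k+1}$ with $\xi'_{k+1}$, which is not what $B_{j,k+1}^{+}$ does. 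Consequently the multiplier you reduce to,
\[
I(\tau,p)=\sum_{\eta\in E(\tau,p)\cap\mathbb{Z}^2}\frac{1}{\langle\eta\rangle^{2\alpha}\langle p-\eta\rangle^{2\alpha}},
\]
is the wrong object. The correct one (after Cauchy--Schwarz against $\gamma_0^{(k+1)}$) has the form
\[
I(\tau,p)=\sum_{m,n\in\mathbb{Z}^2}\frac{\widetilde\delta\bigl(\tau+Q(p)-2Q(n,m)\bigr)\,\langle p\rangle^{2\alpha}}{\langle m-p\rangle^{2\alpha}\langle n-p\rangle^{2\alpha}\langle p-n-m\rangle^{2\alpha}},
\]
living over $\mathbb{Z}^2\times\mathbb{Z}^2$, with \emph{three} weight factors in the denominator and the extra numerator $\langle p\rangle^{2\alpha}$. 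This matters for the geometry as well: for fixed $m\neq 0$, the constraint $\tau+Q(p)-2Q(n,m)\in[0,1]$ puts $n$ in a thin strip around a \emph{line}, not an annular neighbourhood of an ellipse as you describe; and the sharpness construction (Proposition~\ref{Lower_bound_2D}) exploits exactly the $(m,n)$ two-variable structure, which disappears in your version.

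Because of this, the dyadic count must track \emph{three} scales $|m-p|\sim 2^{j_1}$, $|n-p|\sim 2^{j_2}$, $|p-n-m|\sim 2^{j_3}$, and the target is $\#E_{\tau,p}(j)\lesssim_\epsilon 2^{(1+\epsilon)j_{\min}+(1+\epsilon)j_{\med}}$. The Bourgain device and Corollary~\ref{cor:lp} enter after the substitution $\eta=n-m$, $\eta'=n+m$ (using $2Q(n,m)=\tfrac12(Q(n+m)-Q(n-m))$), which factorises the oscillatory sum into four one-dimensional Weyl sums --- but this only gives the right size when the cube-localisations of $\eta$ and $\eta'$ are of size $O(2^{j_{\med}})$ and $O(2^{j_{\min}})$, i.e.\ when $j_3=j_{\min}$; in the other two cases one must instead cover $E_{\tau,p}(j)$ by $2^{j_{\min}}$-cubes and count how many are hit, which is where the thickening Lemma~\ref{lem:thick} (from~\cite{GSS}) is used. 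Your proposal gestures at a ``case-by-case analysis'' and a ``direct geometric count,'' which is in the right spirit, but without the $(m,n)$ double-sum structure, the three-index dyadic decomposition, and the cube-covering argument, the proof does not close. You should correct the Fourier representation of the collision operator first and then redo the reduction from that starting point.
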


The range of regularity exponents $\alpha>\frac{1}{2}$ in Proposition
\ref{2D_spacetime_estimate} is sharp due to the following:

\begin{proposition}
  \label{Lower_bound_2D}
  For $\kappa \in \mathbb{N}$ sufficiently large, there exists
  $\gamma^{(2)}_0: \Lambda^2 \times \Lambda^2 \rightarrow \mathbb{C}$,
  such that for $\delta>0$ sufficiently small:
  \begin{equation}
    \notag
    \|S^{(1,\frac{1}{2})}B_{1,2}\,\mathcal{U}_Q^{(2)}(t)\,\gamma^{(2)}_0\|_{L^2([0,\delta] \times \Lambda \times \Lambda)} \gtrsim_{\,\delta} \sqrt{\ln \kappa} \cdot \|S^{(2,\frac{1}{2})} \gamma^{(2)}_0\|_{L^2(\Lambda^2 \times \Lambda^2)}.
  \end{equation}
  Here, $\delta$ is independent of $\kappa$.
\end{proposition}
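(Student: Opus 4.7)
The plan is to construct an explicit factorized test density matrix and to extract the $\sqrt{\ln\kappa}$ factor by combining Plancherel in time with a lattice-point count near level sets of the quadratic form $Q$.

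Take $\gamma_0^{(2)} := |\psi\rangle\langle\psi|^{\otimes 2}$, where $\widehat{\psi} = \mathbf{1}_{A_\kappa}$ and $A_\kappa \subset \mathbb{Z}^2$ is a carefully chosen frequency set of cardinality $\asymp \kappa^2$, supported in a thin $Q$-annulus of radius $\kappa$. Then $\mathcal{U}_Q^{(2)}(t)\gamma_0^{(2)} = |\phi_t\rangle\langle\phi_t|^{\otimes 2}$ with $\phi_t := e^{it\Delta_Q}\psi$, and a direct computation gives
\[B_{1,2}\mathcal{U}_Q^{(2)}(t)\gamma_0^{(2)}(x;x') = \phi_t(x)\overline{\phi_t(x')}\bigl(|\phi_t(x)|^2 - |\phi_t(x')|^2\bigr).\]
By Parseval, $\|S^{(2,1/2)}\gamma_0^{(2)}\|_{L^2(\Lambda^2\times\Lambda^2)} = \|\psi\|_{H^{1/2}(\Lambda)}^4 \asymp \kappa^6$.

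For the left-hand side, decomposing $B_{1,2} = B_{1,2}^+ - B_{1,2}^-$ yields two tensors separable in $(x,x')$, on which $S^{(1,1/2)}$ acts cleanly. Since $\|\phi_t\|_{H^{1/2}}$ is conserved in time, expanding the $L^2_{xx'}$-norm reduces the squared LHS, up to a cross term absorbed via Cauchy--Schwarz in the subspace of $H^{1/2}$ orthogonal to the one-dimensional span of $\phi_t$, to a Strichartz-type lower bound
\[\int_0^\delta \|\phi_t|\phi_t|^2\|_{H^{1/2}}^2 \, dt \gtrsim (\ln\kappa) \cdot \|\psi\|_{H^{1/2}}^6.\]
Multiplying by $\chi_{[0,\delta]}(t)$ and taking the full spacetime Fourier transform, for each output frequency $\eta$ the transform at frequency $\tau$ is
\[\sum_{\substack{n_1, n_2, n_3 \in A_\kappa \\ n_1-n_2+n_3 = \eta}} \widehat{\chi}_{[0,\delta]}\bigl(\tau + Q(n_1) - Q(n_2) + Q(n_3)\bigr),\]
each summand having modulus $\gtrsim \delta$ on the $\tau$-window $|\tau + Q(n_1)-Q(n_2)+Q(n_3)| \lesssim \delta^{-1}$. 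Bounding this transform pointwise from below by a positive constant on a $\tau$-interval of length $\asymp \delta^{-1}$ (this is the ``integral of a constant function'' alluded to in the introduction and formalized in \eqref{tau1_integral}), and then integrating over that interval, yields a lower bound proportional to the number of sixtuples $(n_1,n_2,n_3;n'_1,n'_2,n'_3) \in A_\kappa^6$ satisfying $n_1-n_2+n_3 = n'_1-n'_2+n'_3$ together with $|Q(n_1)-Q(n_2)+Q(n_3)-Q(n'_1)+Q(n'_2)-Q(n'_3)| \lesssim \delta^{-1}$.

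The main obstacle is this final lattice-point count on the irrational torus. On $\mathbb{T}^2$ the logarithm arises from the divisor-type growth of the representation function $r_2(n)$, as exploited in \cite{KSS}; on a general rectangular torus, exact level sets of $Q$ contain essentially no lattice points, so one must instead count lattice configurations inside an $O(\delta^{-1})$-neighborhood. The flexibility to pass from exact resonance to near-resonance is precisely what the $L^2$-norm over the interval $[0,\delta]$, as opposed to a pointwise $\tau$-evaluation, provides. Applying Cauchy--Schwarz to the near-resonance count produces $\sum_E M(\eta,E)^2$, where $M(\eta,E)$ counts lattice triples with prescribed vector sum $\eta$ and $Q$-defect $E$; a classical divisor-type bound on this representation function, valid on both rational and irrational tori once phrased in the near-resonance regime, delivers the required $\ln\kappa$ boost and completes the argument.
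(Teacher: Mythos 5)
Your proposal takes a genuinely different route from the paper, but it has a gap at the point where the $\ln\kappa$ factor must actually be produced.

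The paper does not use a factorized test density matrix. Instead, it works backwards from the divergence of the multiplier $I(\tau,p)$ at the endpoint $\alpha=\tfrac12$: with $p=(\kappa,0)$, $n=p$ and $m=(0,m_2)$, one has $Q(n,m)=0$ for \emph{all} $m_2\in\mathbb{Z}$, so the resonance constraint $\tau+Q(p)-2Q(n,m)\in[0,1]$ reduces to a constraint on $\tau$ alone and imposes no restriction on $m_2$. The $\ln\kappa$ then arises purely from the slow decay of the Japanese-bracket weights along this \emph{automatically resonant one-dimensional family} -- there is no lattice-point count to perform. The paper then defines $\gamma_0^{(2)}$ by duality (Remark~\ref{sequence_cm2}), supported on exactly that frequency line with coefficients $c_{m_2}$, uses a Schwartz function $\zeta$ with $\widehat\zeta\geq 0$ and $\widehat\zeta\geq 1$ on $[-1,1]$ so that the spacetime Fourier transform is a sum of nonnegative terms with no cancellation, and finally checks that the $B^{-}$ contribution collapses to a single bounded term because of the sparse frequency support of $\gamma_0^{(2)}$.

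The concrete gap in your argument is the step ``a classical divisor-type bound on this representation function, valid on both rational and irrational tori once phrased in the near-resonance regime, delivers the required $\ln\kappa$ boost.'' That is precisely the difficult assertion, and it is not classical. On $\mathbb{T}^2$ the logarithmic failure of the endpoint $L^4_{t,x}$ Strichartz estimate for annulus indicators relies on the arithmetic of $r_2(n)$; on a rectangular torus with irrational $\theta_1^2/\theta_2^2$ the level sets of $Q$ carry essentially no lattice points, and the near-resonance count for the sextuple system $n_1-n_2+n_3=n_1'-n_2'+n_3'$, $|Q(n_1)-Q(n_2)+Q(n_3)-Q(n_1')+Q(n_2')-Q(n_3')|\lesssim\delta^{-1}$ over an $A_\kappa$ of size $\asymp\kappa^2$ has none of the divisor structure your argument invokes. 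You would have to prove, not cite, that this count is $\gtrsim(\ln\kappa)\kappa^{9}$ -- and that is at least as hard as the original estimate. There are also two secondary issues: with $\widehat{\chi_{[0,\delta]}}$ (as opposed to the paper's $\zeta$ with $\widehat\zeta\geq 0$) the summands over triples $(n_1,n_2,n_3)$ carry $\tau$-dependent phases and your ``pointwise lower bound by a positive constant on a $\tau$-interval'' must confront genuine cancellation; and the $B^{+}-B^{-}$ cross term in the factorized setting is comparable in size to each of $B^{\pm}$, so the projection onto $\{\phi_t\}^{\perp}\subset H^{1/2}$ that you sketch must be quantified and shown to retain the full $\ln\kappa$ gain -- another nontrivial claim that is asserted rather than proved.

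Compared to these difficulties, the paper's construction is surgical: by choosing $\gamma_0^{(2)}$ non-factorized and supported on a single frequency line where the resonance holds identically, it converts the entire problem into a one-dimensional weighted $\ell^2$-duality statement (Remark~\ref{sequence_cm2}) with no counting at all, no phase cancellation (because $\widehat\zeta\geq 0$), and a trivially small $B^{-}$ contribution.
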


Let us first prove Proposition \ref{2D_spacetime_estimate}.
\begin{proof}[Proof of Proposition \ref{2D_spacetime_estimate}]

  Let $\psi \in \mathcal{S}(\mathbb{R})$ be such that $\psi \geq 1$ on
  $[0,1]$. It suffices to show that:
  \begin{equation}
    \notag
    \|\psi(t)S^{(k,\alpha)}B_{1,k+1}^{+}\,\mathcal{U}_Q^{(k+1)}(t)\gamma_0^{(k+1)}\|_{L^2(\mathbb{R} \times \Lambda^k \times \Lambda^k)}
  \end{equation}
  is bounded by the expression on the right-hand side in
  \eqref{2D_spacetime_estimate_bound}. Here, the operator
  $B_{j,k+1}^{\pm}$ is defined as in \eqref{B+} and \eqref{B-} when we
  set $\theta_1=\theta_2=1$.  The estimate when $B_{1,k+1}^{+}$ is
  replaced by general $B_{j,k+1}^{\pm}$ is proved in the same way.

  We compute
  \begin{align*}
    &\big(\psi(t) S^{(k,\alpha)}B_{1,k+1}^{+}\,\mathcal{U}_Q^{(k+1)}(t)\,\gamma_0^{(k+1)}\big)\,\widetilde{\,}\,(\tau,\vec{\xi}_k;\vec{\xi'}_k)\\
    =&\sum_{\xi_{k+1},\xi'_{k+1} \in \mathbb{Z}^2} \widehat{\psi}\big(\tau+Q(\xi_1-\xi_{k+1}+\xi'_{k+1})+Q(\vec{\xi}_{k+1})-Q(\xi_1)-Q(\vec{\xi}'_{k+1})\big) \cdot \\
    &\qquad \cdot \prod_{j=1}^{k+1} \langle \xi_j \rangle^{\alpha}
    \cdot \prod_{j=1}^{k+1} \langle \xi'_j \rangle^{\alpha} \cdot
    \widehat{\gamma}_0^{(k+1)}(\xi_1-\xi_{k+1}+\xi'_{k+1},\xi_2,\ldots,\xi_{k+1};\xi'_1,\ldots,\xi'_{k+1}).
  \end{align*}
  Hence, by the Cauchy-Schwarz inequality:
  \[
  \Big|\big(\psi(t)
  S^{(k,\alpha)}B_{1,k+1}^{+}\,\mathcal{U}_Q^{(k+1)}(t)\,\gamma_0^{(k+1)}\big)\,\widetilde{\,}\,(\tau,\vec{\xi}_k;\vec{\xi'}_k)\Big|
  \leq \big(\Sigma_1\big)^{\frac{1}{2}} \cdot
  \big(\Sigma_2\big)^{\frac{1}{2}}\] where
  \begin{align*}
    &\Sigma_1\\
    :=&\sum_{\xi_{k+1},\xi'_{k+1} \in \mathbb{Z}^2}
    \frac{\big|\widehat{\psi}\big(\tau+Q(\xi_1-\xi_{k+1}+\xi'_{k+1})+Q(\vec{\xi}_{k+1})-Q(\xi_1)-Q(\vec{\xi}'_{k+1})\big)\big|^2
      \cdot \langle \xi_1\rangle^{2\alpha}}{\langle
      \xi_1-\xi_{k+1}+\xi'_{k+1} \rangle ^{2\alpha} \cdot \langle
      \xi_{k+1} \rangle^{2\alpha} \langle \xi'_{k+1}
      \rangle^{2\alpha}}
  \end{align*}
  and
  \begin{align*}
    \Sigma_2:=&\sum_{\xi_{k+1},\xi'_{k+1} \in \mathbb{Z}^2} \langle \xi_1-\xi_{k+1}+\xi'_{k+1}\rangle^{2\alpha} \cdot \prod_{j=2}^{k+1} \langle \xi_j \rangle^{2\alpha} \cdot \prod_{j=1}^{k+1} \langle \xi'_j \rangle^{2\alpha} \\
    &\cdot
    |\widehat{\gamma}_0^{(k+1)}(\xi_1-\xi_{k+1}+\xi'_{k+1},\xi_2,\ldots,\xi_{k+1};\xi'_1,\ldots,\xi'_{k+1})|^2.
  \end{align*}
  In particular, the claim will follow if we show that $\Sigma_1$ is
  uniformly bounded in $(\tau,\vec{\xi}_k;\vec{\xi'}_k)$.  Let us
  analyze $\Sigma_1$ more closely. We note that it can be decomposed
  as $ \Sigma_1=\sum_{\ell \in \mathbb{Z}}\Sigma_1(\ell)$ for
  \begin{align*}
    &\Sigma_1(\ell)\\
    :=& \sum_{\xi_{k+1},\xi'_{k+1} \in \mathbb{Z}^2 : \ast}
    \frac{\big|\widehat{\psi}\big(\tau+Q(\xi_1-\xi_{k+1}+\xi'_{k+1})+Q(\vec{\xi}_{k+1})-Q(\xi_1)-Q(\vec{\xi}'_{k+1})\big)\big|^2
      \langle \xi_1\rangle^{2\alpha}}{\langle
      \xi_1-\xi_{k+1}+\xi'_{k+1} \rangle ^{2\alpha} \langle \xi_{k+1}
      \rangle^{2\alpha} \langle \xi'_{k+1} \rangle^{2\alpha}},
  \end{align*}
  where we sum with respect to the constraint $\ast$ given by
  \[
  \tau+Q(\xi_1-\xi_{k+1}+\xi'_{k+1})+Q(\vec{\xi}_{k+1})-Q(\xi_1)-Q(\vec{\xi}'_{k+1})
  \in [\ell,\ell+1].
  \]
  We can choose $\psi$ such that $\widehat{\psi}$ is increasing on
  $(-\infty,0)$ and decreasing on $(0,+\infty)$ (for example, we can
  take $\psi(t)=ce^{-t^2}$ for an appropriate choice of $c>0$). Then,
  \begin{align*}
    &\Sigma_1(\ell)\\
    \lesssim & \sum_{\ell \in \mathbb{Z}} |\widehat{\psi}(\ell)|^2
    \cdot \sup_{m \in \mathbb{Z}} \Big[\sum_{\xi_{k+1},\xi'_{k+1} \in
      \mathbb{Z}^2: \ast \ast }\frac{\langle \xi_1
      \rangle^{2\alpha}}{\langle \xi_1-\xi_{k+1}+\xi'_{k+1}
      \rangle^{2\alpha} \cdot \langle \xi_{k+1} \rangle^{2\alpha}
      \cdot \langle \xi'_{k+1} \rangle^{2\alpha}}\Big]
  \end{align*}
  with respect to the constraint $\ast \ast$ gived by
  \[
  \tau+Q(\xi_1-\xi_{k+1}+\xi'_{k+1})+Q(\vec{\xi}_{k+1})-Q(\xi_1)-Q(\vec{\xi}'_{k+1})
  \in [m,m+1].\] Hence,
  \begin{align*}
    &\Sigma_1(\ell)\\
    \lesssim & \sup_{m \in \mathbb{Z}} \sum_{\xi_{k+1},\xi'_{k+1}\in
      \mathbb{Z}^2}
    \frac{\widetilde{\delta}\big(\tau-m+Q(\xi_1-\xi_{k+1}+\xi'_{k+1})+Q(\vec{\xi}_{k+1})-Q(\xi_1)-Q(\vec{\xi}'_{k+1})\big)\cdot
      \langle \xi_1 \rangle^{2\alpha}}{\langle
      \xi_1-\xi_{k+1}+\xi'_{k+1} \rangle^{2\alpha} \cdot \langle
      \xi_{k+1} \rangle^{2\alpha} \cdot \langle \xi'_{k+1}
      \rangle^{2\alpha}}.
  \end{align*}
  Here, $\widetilde{\delta}:=\chi_{[0,1]}$ is the characteristic
  function of the interval $[0,1]$.  We rewrite
  \begin{align*}
    &\tau+Q(\xi_1-\xi_{k+1}+\xi'_{k+1})+Q(\vec{\xi}_{k+1})-Q(\xi_1)-Q(\vec{\xi}'_{k+1})\\
    =&\big(\tau+\sum_{\ell=2}^{k}
    Q(\xi_{\ell})-\sum_{\ell=1}^{k}Q(\xi'_{\ell})\big)+Q(\xi_1-\xi_{k+1}+\xi'_{k+1})+Q(\xi_{k+1})-Q(\xi'_{k+1}).
  \end{align*}
  After changing variables $\tau \mapsto
  \tau+\sum_{\ell=2}^{k}Q(\xi_{\ell})-\sum_{\ell=1}^{k}Q(\xi'_{\ell})$,
  and letting $p:=\xi_1,n:=\xi_{k+1},m:=-\xi_{k+1}$, it follows that
  we need to bound:
  \begin{equation}
    \label{I_tau_p1}
    I(\tau,p):=\sum_{m,n \in \mathbb{Z}^2} \frac{\widetilde{\delta}\big(\tau+Q(p-n-m)+Q(n)-Q(m)\big) \cdot \langle p \rangle^{2\alpha}}{\langle p-n-m \rangle^{2\alpha} \cdot \langle n \rangle^{2\alpha} \cdot \langle m \rangle^{2\alpha}}
  \end{equation}
  uniformly in $\tau \in \mathbb{R}$ and $p \in \mathbb{Z}^2$.  By
  using the same calculations as in \cite[equation (48)]{GSS}, we can
  rewrite $I(\tau,p)$ as
  \begin{equation}
    \label{I_tau_p2}
    I(\tau,p)=\sum_{m,n \in \mathbb{Z}^2} \frac{\widetilde{\delta}\big(\tau+Q(p)-2Q(n,m)\big) \cdot \langle p \rangle^{2\alpha}}{\langle m-p \rangle^{2\alpha} \cdot \langle n-p \rangle^{2\alpha} \cdot \langle p-n-m \rangle^{2\alpha}}.
  \end{equation}
  Obviously, the contribution of $m=0$ or $n=0$ is uniformly bounded if $\alpha>\frac12$, so we can restrict the sum to $\mathbb{Z}^2\setminus\{0\} \times
      \mathbb{Z}^2\setminus\{0\}$.
  The advantage of writing $I(\tau,p)$ as in \eqref{I_tau_p2} instead
  of as in \eqref{I_tau_p1} is that, in \eqref{I_tau_p2}, for a fixed
  $m \neq 0$, one sums over the set of all $n$ which lie in a
  neighborhood of a fixed hyperplane.

  Given $j=(j_1,j_2,j_3) \in \mathbb{N}_0^3$, $\tau \in \mathbb{R}$,
  and $p \in \mathbb{Z}^2$, we let $E_{\tau,p}(j) \subseteq \mathbb{Z}^2\setminus\{0\} \times
    \mathbb{Z}^2\setminus\{0\}$
  be the set of all pairs $(m,n)$ such that
  \begin{equation}
    \notag
    \begin{cases}
      \tau + Q(p)-2Q(n,m) \in [0,1]\\
      |m-p| \sim 2^{j_1}, |n-p| \sim 2^{j_2}, |p-n-m| \sim 2^{j_3}.
    \end{cases}
  \end{equation}
  Here, by $|x| \sim 2^{j}$, we mean $2^{j-1} \leq |x| < 2^j$ when $j
  \geq 1$ and $|x| < 1$ when $j=0$.  In what follows, we will order
  $j_1,j_2,j_3$ as $j_{\min} \leq j_{\med} \leq j_{\max}$.  We would
  like to prove that
  \begin{equation}
    \label{E_tau_p_bound_2D}
    \#E_{\tau,p}(j) \lesssim_{\epsilon} 2^{(1+\epsilon)j_{\min}+(1+\epsilon)j_{\med}}
  \end{equation}
  for all $\epsilon>0$. The calculations in \cite[equations (54) and
  (55)]{GSS} then imply the claim.

  Let us fix $\tau,p,j$ as above. We will now estimate $\#
  E_{\tau,p}(j)$. We first argue by using an idea from \cite{B07}. In
  particular, let us fix $\phi \in C_0^{\infty}(\mathbb{R})$ such that
  $\widehat{\phi} \geq 0$ on all of $\mathbb{R}$ and $\widehat{\phi}
  \geq 1$ on $[0,1]$, see Lemma \ref{Fourier_Analysis_Lemma}.

  By using the same argument to deduce \cite[formula (1.1.8')]{B07},
  it follows that:
  \begin{equation}
    \label{E_tau_p_Fourier_transform_bound}
    \#E_{\tau,p}(j) \leq \int \Big[\mathop{\sum_{n,m \in \mathbb{Z}^2}}_{|m-p| \sim 2^{j_1}, |n-p| \sim 2^{j_2}, |p-m-n| \sim 2^{j_3}}e^{2iQ(n,m)t}\Big] \cdot e^{-i(\tau+Q(p))t} \cdot \phi(t)\,dt.
  \end{equation}
  This is the case since:
  \begin{equation}
    \notag
    \int e^{2iQ(n,m)t} \cdot e^{-i(\tau+Q(p))t} \phi(t)\, dt=\widehat{\phi}\big(\tau+Q(p)-2Q(m,n)\big).
  \end{equation}

  \begin{remark}
    \label{E_tau_p_mn_set}
    Let us note that the estimate in
    \eqref{E_tau_p_Fourier_transform_bound} holds if we replace the
    sum in $m$ and $n$ by the sum over a larger set in $m$ and
    $n$. This follows from the fact that $\widehat{\phi} \geq 0$ on
    all of $\mathbb{R}$. We will use this observation several times in
    the discussion that follows.
  \end{remark}
  We note that:
  \begin{equation}
    \notag
    2Q(n,m)=\frac{Q(n+m)-Q(n-m)}{2}.
  \end{equation}
  So, the right-hand side of \eqref{E_tau_p_Fourier_transform_bound}
  equals:
  \begin{equation}
    \label{E_tau_p_RHS}
    \int \Big[\mathop{\sum_{n,m \in \mathbb{Z}^2}}_{|m-p| \sim 2^{j_1}, |n-p| \sim 2^{j_2}, |p-m-n| \sim 2^{j_3}}e^{\frac{1}{2} it \big(Q(n+m)-Q(n-m)\big)}\Big] \cdot e^{-i(\tau+Q(p))t} \cdot \phi(t)\,dt.
  \end{equation}
  Let $\eta:=n-m$ and $\eta':=n+m$. This is a one-to-one change of
  variables. We would like to find the localization properties of the
  $\eta$ and $\eta'$ variables thus defined.

  We first note that:

\begin{equation}
  \notag
  \eta=n-m=
\end{equation}
\begin{equation}
  \notag
  =\underbrace{(n-p)}_{2^{j_2}}-\underbrace{(m-p)}_{2^{j_1}}=\underbrace{p-m-n}_{2^{j_3}}+\underbrace{2n-2p}_{2^{j_2+1}}+p=\underbrace{-p+n+m}_{2^{j_3}}-\underbrace{(2m-2p)}_{2^{j_1+1}}-p.
\end{equation}
Here, by $\underbrace{x}_{2^j}$, we mean that $|x| \leq 2^j$. Let $a \vee b:=\max\{a,b\}$. Since
\[2^{k_1}+2^{k_2} \leq 2^{k_1\vee k_2+1} \text{ for all }k_1,k_2 \geq 0,\] it
follows that:
\begin{equation}
  \label{eta_localization}
  \eta \in B_{2^{j_1\vee j_2+1}}(0) \cap B_{2^{j_2\vee j_3+2}}(p) \cap B_{2^{j_1\vee j_3+2}}(-p).
\end{equation}
Similarly,
\begin{equation}
  \notag
  \eta'=n+m=\underbrace{(-p+n+m)}_{2^{j_3}}+p=\underbrace{(n-p)}_{2^{j_2}}+\underbrace{(m-p)}_{2^{j_1}}+2p.
\end{equation}
Hence:
\begin{equation}
  \label{eta'_localization}
  \eta' \in B_{2^{j_3}}(p) \cap B_{2^{j_1\vee j_2+1}}(2p).
\end{equation}

We now apply this change of variables in \eqref{E_tau_p_RHS} and
deduce that:
\begin{align}
  &\# E_{\tau,p}(j)\notag\\
  \leq &
  \int \Big[\sum_{\eta,\eta' \in \mathbb{Z}^2 \atop \eqref{eta_localization},\eqref{eta'_localization}} e^{\frac{1}{2} it \big(Q(\eta)-Q(\eta')\big)}\Big] \cdot e^{-i(\tau+Q(p))t} \cdot \phi(t)\,dt\notag\\
  \leq & \int \Big[\sum_{\eta,\eta' \in \mathbb{Z}^2 \atop
    \eqref{eta_localization2},\eqref{eta'_localization2}}
  e^{\frac{1}{2} it \big(Q(\eta)-Q(\eta')\big)}\Big] \cdot
  e^{-i(\tau+Q(p))t} \cdot
  \phi(t)\,dt \label{E_tau_p_Fourier_transform_bound2}
\end{align}
where
\begin{align}
  \eta &\in C_{2^{j_1\vee j_2+1}}(0) \,\cap\, C_{2^{j_2\vee j_3+2}}(p) \,\cap\, C_{2^{j_1\vee j_3+2}}(-p)\label{eta_localization2},\\
  \eta' &\in C_{2^{j_3}}(p) \,\cap\, C_{2^{j_1\vee
      j_2+1}}(2p)\label{eta'_localization2}.
\end{align}
Here $C_{2^j}(q)$ denotes a two-dimensional square, centered at $q$,
whose sides are parallel to the coordinate axes, and who have
sidelength $2^{j+1}$.  Let us note that, in the above calculation, we
used Remark \ref{E_tau_p_mn_set}.  By the triangle inequality and the
fact that $\phi \in C_0^{\infty}(\mathbb{R})$, it follows that:
\begin{equation}
  \label{E_tau_p}
  \#E_{\tau,p}(j) \leq c \cdot \Big\|\sum_{\eta,\eta' \in \mathbb{Z}^2\atop \eqref{eta_localization2},\eqref{eta'_localization2}} e^{\frac{1}{2}it\big(Q(\eta)-Q(\eta')\big)} \Big\|_{L^1_t(I)}
\end{equation}
for some $c>0$ and for some finite interval $I \subseteq \mathbb{R}$.

Let us now use \eqref{E_tau_p} to show \eqref{E_tau_p_bound_2D}. We
will argue by considering all of the possible cases for the relative
sizes of $j_1,j_2,j_3$. In particular, we consider:

\vspace{5mm} \textbf{Case 1:}\,\,$j_3=\min\{j_1,j_2,j_3\}$.
\vspace{5mm}

By \eqref{eta_localization2}, it follows that, in this case, $\eta$ is
localized to a cube of sidelength $O(2^{j_{\med}})$. Moreover, by
\eqref{eta'_localization2}, it follows that $\eta'$ is localized to a
cube of sidelength $O(2^{j_{\min}})$. Hence, in this case, we need to
estimate\footnote{Similarly as in Remark \ref{E_tau_p_mn_set}, it is
  possible to replace the sum in $\eta,\eta'$ in \eqref{E_tau_p} by
  the sum over a larger set. Moreover, for simplicity of notation, we
  write $\eta=(\eta_1,\eta_2), \eta'=(\eta'_1,\eta'_2) \in
  \mathbb{Z}^2$.}:

\begin{equation}
  \label{Case1_Term1}
  \int_{I} \Big| \mathop{\sum_{\eta_1,\eta_2 \in \mathbb{Z}}}_{\eta_j \in I^{\med}_j} \mathop{\sum_{\eta'_1,\eta'_2 \in \mathbb{Z}}}_{\eta'_j \in I^{\min}_j} e^{\,\frac{1}{2}it\big(\theta_1^2\eta_1^2\,+\,\theta_2^2\eta_2^2\,-\,\theta_1^2(\eta'_1)^2\,-\,\theta_2^2(\eta'_2)^2\big)}\Big|dt.
\end{equation}
Here $I^{\med}_1,I^{\med}_2$ are fixed intervals of size $\sim
2^{j_{\med}}$ and $I^{\min}_1,I^{\min}_2$ are fixed intervals of size
$\sim 2^{j_{\min}}$.  By H\"{o}lder's inequality, this expression is:
\begin{equation}
  \notag
  \eqref{Case1_Term1} \leq \Big(\int_{I} \Big|\mathop{\sum_{\eta_1 \in \mathbb{Z}}}_{\eta_1 \in I^{\med}_1} e^{\,\frac{1}{2}it\theta_1^2 \eta_1^2}\Big|^4\,dt\Big)^{\frac{1}{4}} \cdot \Big(\int_{I}\Big|\mathop{\sum_{\eta_2 \in \mathbb{Z}}}_{\eta_2 \in I^{\med}_2} e^{\,\frac{1}{2}it\theta_2^2\eta_2^2}\Big|^4\,dt\Big)^{\frac{1}{4}} \cdot
\end{equation}
\begin{equation}
  \notag
  \cdot \Big(\int_{I} \Big|\mathop{\sum_{\eta'_1 \in \mathbb{Z}}}_{\eta'_1 \in I^{\min}_1} e^{\,\frac{1}{2} it\theta_1^2(\eta'_1)^2}\Big|^4\,dt\Big)^{\frac{1}{4}} \cdot \Big(\int_{I} \Big|\mathop{\sum_{\eta'_2 \in \mathbb{Z}}}_{\eta'_2 \in I^{\min}_2} e^{\,\frac{1}{2} it\theta_2^2 (\eta'_2)^2}\Big|^4\,dt\Big)^{\frac{1}{4}}.
\end{equation}
For each of the four factors, we rescale in time and apply Lemma
\ref{lem:l4} to deduce that this product is:
\begin{equation}
  \notag
  \lesssim_{\epsilon} \big(2^{\,(2+\epsilon)j_{\med}}\big)^{\frac{1}{4}} \cdot \big(2^{\,(2+\epsilon)j_{\med}}\big)^{\frac{1}{4}} \cdot \big(2^{\,(2+\epsilon)j_{\min}}\big)^{\frac{1}{4}} \cdot \big(2^{\,(2+\epsilon)j_{\min}}\big)^{\frac{1}{4}}
\end{equation}
\begin{equation}
  \notag
  \lesssim 2^{\,(1+\epsilon)j_{\min}\,+\,(1+\epsilon)j_{\med}}
\end{equation}
for all $\epsilon>0$. This is a good bound. We note that the implied
constants depend on $\theta_1$ and $\theta_2$.

\vspace{5mm} \textbf{Case 2:}\,\,$j_1=\min\{j_1,j_2,j_3\}$.
\vspace{5mm}

Let us note that, in this case, we can no longer deduce that $\eta'$
is localized to a ball of radius $O(2^{j_{\min}})$. In order to reduce
to the case where one of the variables has a localization of the order
of the smallest frequency, we will apply a covering argument similar
to that which was used in \cite[Proof of Proposition 3.1, Case
3]{GSS}. In particular, given $k=(k_1,k_2) \in \mathbb{Z}^2$, we
consider the rectangle:

\begin{equation}
  \notag
  B_k:=[2^{j_1}k_1,2^{j_1}k_1+2^{j_1}-1] \times [2^{j_1}k_2,2^{j_1}k_2+2^{j_1}-1].
\end{equation}
By using the arguments from Case 1, it follows that for all $(k,k')
\in \mathbb{Z}^2 \times \mathbb{Z}^2$, it is the case that:
\begin{equation}
  \notag
  \#\big(E_{\tau,p}(j) \cap (B_k \times B_{k'})\big) \lesssim_{\epsilon} 2^{\,(2+\epsilon)j_1}
\end{equation}
for all $\epsilon>0$. Namely, in $E_{\tau,p}(j) \cap (B_k \times
B'_k)$, the variables $m$ and $n$, and hence $\eta$ and $\eta'$ are
localized to sets of diameter $O(2^{j_1})$.

Hence, in order to prove \eqref{E_tau_p_bound_2D} in this case, it
suffices to show that:
\begin{equation}
  \notag
  \#\{(k,k') \in \mathbb{Z}^2 \times \mathbb{Z}^2, \,E_{\tau,p}(j) \cap (B_k \times B'_k) \neq \emptyset\} \lesssim 2^{\,\min\{j_2,j_3\}-j_1}.
\end{equation}
Let us recall
\begin{lemma}[Lemma 3.6 from \cite{GSS}]\label{lem:thick}
  Let $Y \subset \mathbb{R}^D$ be any set, and let $Y_{s}$ be the set
  of points in $\mathbb{R}^D$ which are of distance at most $s$ to the
  set $Y$. Let $Z \subset \mathbb{R}^D$ be any $r$-separated set
  (meaning that $|x-x'| \geq r$ whenever $x,x'$ are distinct points in
  $Z$). Then for any $r' > 0$:
$$ \# ( Y_{r'} \cap Z) \lesssim (\min \{r,r'\} )^{-D} |Y_{r'}|.$$
Here, $\big|\cdot\big|$ denotes Lebesgue measure on
$\mathbb{R}^D$. The implied constant depends only on the dimension.
\end{lemma}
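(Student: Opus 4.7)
The plan is to prove the lemma by a standard volume packing argument. I would set $\rho := \tfrac{1}{2}\min\{r, r'\}$ and associate to each $z \in Y_{r'} \cap Z$ the open ball $B(z, \rho) \subset \mathbb{R}^D$. Two elementary observations drive the proof: first, the $r$-separation of $Z$ together with $2\rho \leq r$ gives that the balls $B(z, \rho)$ for distinct $z \in Y_{r'} \cap Z$ are pairwise disjoint; second, because $z \in Y_{r'}$ there exists $y \in Y$ with $|z-y|\leq r'$, and the triangle inequality forces $B(z,\rho) \subseteq Y_{r'+\rho}$.

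Summing the volumes of the disjoint balls yields
\[
\#(Y_{r'} \cap Z)\cdot c_D \rho^D \;\leq\; |Y_{r'+\rho}|,
\]
where $c_D$ denotes the volume of the unit ball in $\mathbb{R}^D$. Since $\rho \leq r'/2$, the right-hand side is bounded by $|Y_{3r'/2}|$, and dividing through by $c_D \rho^D$ produces
\[
\#(Y_{r'} \cap Z) \;\lesssim\; (\min\{r,r'\})^{-D}\,|Y_{3r'/2}|.
\]

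To match the form of the stated inequality, the remaining task is to show the doubling-type estimate $|Y_{3r'/2}| \lesssim_D |Y_{r'}|$. I would obtain this via a standard Vitali covering argument applied to the family $\{B(y, r'/2) : y \in Y\}$: extract a maximal disjoint subfamily $\{B(y_i, r'/2)\}_i$, so that by maximality every $y \in Y$ lies in some $B(y_i, r')$, and consequently any $w \in Y_{3r'/2}$ satisfies $w \in B(y_i, 5r'/2)$ for some $i$. Since the balls $B(y_i, r'/2)$ are pairwise disjoint and contained in $Y_{r'/2} \subseteq Y_{r'}$, a comparison of volumes yields
\[
|Y_{3r'/2}| \;\leq\; \sum_i |B(y_i, 5r'/2)| \;=\; 5^D \sum_i |B(y_i, r'/2)| \;\leq\; 5^D\,|Y_{r'}|.
\]

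Combining the two estimates completes the proof with an implied constant depending only on $D$. The main obstacle is conceptually minor: the packing step is immediate from $r$-separation and the choice of $\rho$, while the one subtlety is the doubling inequality for thickenings $|Y_{3r'/2}| \lesssim_D |Y_{r'}|$, whose Vitali-covering proof proceeds uniformly with no structural hypothesis on $Y$.
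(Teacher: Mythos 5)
The paper does not reproduce a proof of this lemma; it is quoted verbatim from \cite{GSS}, so there is no internal argument to compare yours against. Taken on its own, your packing argument is correct: the $r$-separation of $Z$ together with $2\rho\le r$ makes the balls $B(z,\rho)$ pairwise disjoint, the inclusion $B(z,\rho)\subseteq Y_{r'+\rho}\subseteq Y_{3r'/2}$ holds because $\rho\le r'/2$, and your Vitali argument for the doubling estimate $|Y_{3r'/2}|\le 5^{D}|Y_{r'}|$ is sound (the maximal disjoint subfamily is countable since disjoint balls of fixed positive radius each contain a distinct rational point, and if $|Y_{r'}|=\infty$ the statement is trivial). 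One small wrinkle in the exposition: you assert that $z\in Y_{r'}$ produces a $y\in Y$ with $|z-y|\le r'$, but the infimum defining $\mathrm{dist}(z,Y)$ need not be attained if $Y$ is not closed; it is cleaner to argue directly that $\mathrm{dist}(w,Y)\le |w-z|+\mathrm{dist}(z,Y)\le \rho+r'$ for $w\in B(z,\rho)$, which gives the same inclusion without instantiating a witness.

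You can also dispense with the Vitali step entirely by noting that a fixed dimensional fraction of each ball $B(z,\rho)$ already lies in $Y_{r'}$ itself. Pick $y\in \overline{Y}$ with $|z-y|=\mathrm{dist}(z,Y)=:d\le r'$. If $d\le r'-\rho$ then $B(z,\rho)\subseteq B(y,r')\subseteq Y_{r'}$. Otherwise $d>r'-\rho\ge\rho$, and the ball $B\bigl(z+\tfrac{\rho}{2}\tfrac{y-z}{|y-z|},\,\tfrac{\rho}{2}\bigr)$ is contained in both $B(z,\rho)$ and $B(y,r')$, hence in $B(z,\rho)\cap Y_{r'}$. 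In either case $\bigl|B(z,\rho)\cap Y_{r'}\bigr|\ge c_{D}2^{-D}\rho^{D}$, and summing over the pairwise disjoint balls gives
\[
\#\bigl(Y_{r'}\cap Z\bigr)\cdot c_{D}2^{-D}\rho^{D}\;\le\;\bigl|Y_{r'}\bigr|
\]
in a single step. Your version instead trades this geometric observation (that the ball sees a definite proportion of $Y_{r'}$) for a doubling lemma on thickenings; both routes are valid and elementary, but the direct one avoids invoking a covering argument and produces a cleaner constant.
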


We use Lemma \ref{lem:thick} to deduce that:
\begin{equation}
  \notag
  \#\{(k,k') \in \mathbb{Z}^2 \times \mathbb{Z}^2,\,E_{\tau,p}(j) \cap (B_k \times B_{k'}) \neq \emptyset\} \lesssim 2^{-4j_1}\big|X_{2^{j_1}}\big|.
\end{equation}

Here, $X_{2^{j_1}}$ denotes the set of all points in $\mathbb{R}^2
\times \mathbb{R}^2$ which are of distance at most $2^{j_1}$ from the
set $E_{\tau,p}(j)$.  More precisely, $E_{\tau,p}(j) \cap (B_k \times
B_{k'}) \neq \emptyset$ implies that $(k,k')$ belongs to a $2^{j_1}$
thickening of $E_{\tau,p}(j)$.  We then apply Lemma \ref{lem:thick}
with $Z$ being the set of all the centers of $B_k \times B_{k'}$
(which is $2^{j_1}$-separated), with $Y$ being $E_{\tau,p}(j)$, and
with $r=2^{j_1}, r'=2^{j_1}.$ We recall that the dimension $D$ is
equal to $4$.

Thus, we would like to show that:
\begin{equation}
  \notag
  \big|X_{2^{j_1}}\big| \lesssim 2^{\,3j_1\,+\,\min\{j_2,j_3\}}.
\end{equation}

Let $(m,n) \in E_{\tau,p}(j)$.

Then, we know that $m,n \neq 0$ and:
\begin{equation}
  \notag
  \begin{cases}
    |m-p| \sim 2^{j_1},\,|n-p| \sim 2^{j_2},\,|p-n-m| \sim 2^{j_3}\\
    \tau+Q(p)-2Q(n,m) \in [0,1]
  \end{cases}
\end{equation}
We note that \emph{$m$ is allowed to vary over a ball of radius
  $O(2^{j_1})$}. For fixed $m$, the $n$ coordinate is allowed to vary
over a ball of radius $O(2^{\,\min\{j_2,j_3\}})$. Furthermore, since
$\tau+Q(p)-2Q(n,m) \in [0,1]$, it follows that $n$ lies within an
$O(1)$ distance of a fixed line in $\mathbb{R}^2$. Namely, we know
that:

$$\frac{\tau+Q(p)}{2 |m|} -Q\Big(n,\frac{m}{|m|}\Big)= \frac{\tau+Q(p)}{2 |m|}-n_1 \cdot \frac{\theta_1^2 \, m_1}{|m|}-n_2 \cdot \frac{\theta_2^2 \, m_2}{|m|} \in \Big[0,\frac{1}{2|m|}\Big]$$ and $|m| \geq 1$.
Hence, \emph{$n$ lies in the intersection of a ball of radius
  $O(2^{\,\min\{j_2,j_3\}})$ and an $O(1)$ neighborhood of a fixed
  line in $\mathbb{R}^2$.}

Let us now consider the thickening $X_{2^{j_1}}$. If $(x,y) \in
X_{2^{j_1}}$, it follows from the previous arguments that $x$ lies in
a ball of radius $O(2^{j_1})$, and for a fixed $x$, the $y$ coordinate
lies in the intersection of a ball of radius
$O(2^{\,\min\{j_2,j_3\}})$ and an $O(2^{j_1})$ neighborhood of a fixed
line. In deducing the localization for $(x,y)$ from the localization
of $(m,n)$ we used the fact that $j_1=\min\{j_1,j_2,j_3\}$. Let us
note that, in order to deduce the localization properties of the $y$
coordinate, we thicken by an amount of $\sim 2^{j_1}$ in the direction
perpendicular to the line and parallel to the line separately.

Consequently, the Lebesgue measure of the set to which $x$ is
localized is $\lesssim 2^{2j_1}$. For a fixed $x$, the Lebesgue
measure of the set to which $y$ is localized is:
$$\lesssim 2^{j_1} \cdot 2^{\,\min\{j_2,j_3\}}=2^{\,j_1\,+\,\min\{j_2,j_3\}}.$$
We may hence conclude that:
\begin{equation}
  \notag
  \big|X_{2^{j_1}}\big| \lesssim 2^{\,2j_1} \cdot 2^{\,j_1\,+\,\min\{j_2,j_3\}}=2^{\,3j_1\,+\,\min\{j_2,j_3\}}.
\end{equation}
This is the bound that we wanted to show.

\vspace{5mm} \textbf{Case 3:}\,\,$j_2=\min\{j_1,j_2,j_3\}$.
\vspace{5mm}

By symmetry in $m$ and $n$ in the sum defining $I(\tau,p)$, this case
is analogous to Case 2.  Proposition \ref{2D_spacetime_estimate} now
follows.
\end{proof}

\begin{remark}
  \label{2D_spacetime_estimate_bound_Lambda2_remark}
  If we apply the rescaling \eqref{rescaling} in inequality
  \eqref{2D_spacetime_estimate_bound}, we can deduce a spacetime
  estimate on $\Lambda_2$. More precisely, we can deduce that, given
  $\alpha>\frac{1}{2}$, there exists $C_1>0$ depending on $\alpha,
  \theta_1, \theta_2$ such that for all $k \in \mathbb{N}$ and for all
  density matrices $\gamma_0^{(k)}$ of order $k$ on $\Lambda_d$, it is
  the case that:
  \begin{equation}
    \label{2D_spacetime_estimate_bound_Lambda2}
    \|S^{(k,\alpha)} B_{j,k+1} \,\mathcal{U}^{(k+1)}(t)\,\gamma_0^{(k+1)}\|_{L^2([0,1] \times \Lambda_2^k \times \Lambda_2^k)} \leq C_1^k \|S^{(k+1,\alpha)}\gamma_0^{(k+1)}\|_{L^2(\Lambda_2^{k+1} \times \Lambda_2^{k+1})}.
  \end{equation}
  The reason why we obtain a $k$-th power of $C_1$ is that the Sobolev
  norms which we are using are inhomogeneous. Due to the additional
  $k$ dependence in the constant, it is not possible to directly use
  \eqref{2D_spacetime_estimate_bound_Lambda2} and prove a conditional
  uniqueness result for \eqref{GP1}. We will circumvent this
  difficulty by first applying \eqref{2D_spacetime_estimate_bound} in
  order to prove a conditional uniqueness result for \eqref{GP}. Then,
  we will use the rescaling \eqref{rescaling} and the correspondence
  of solutions to \eqref{GP1} and \eqref{GP} given by Lemma
  \ref{correspondence} in order to deduce a conditional uniqueness
  result for \eqref{GP1}. The details of this approach will be given
  in Subsection \ref{A conditional uniqueness result in 2D} below.
\end{remark}

We will now consider the endpoint case in $2D$. In other words, we set
$\alpha=\frac{1}{2}$. Before we prove Proposition
\ref{Lower_bound_2D}, let us first note some preliminaries. If
$\alpha=\frac{1}{2}$, then it is the case that:

\begin{equation}
  \notag
  I(\tau,p)=\sum_{m,n \in \mathbb{Z}^2} \frac{\widetilde{\delta}\big(\tau+Q(p)-2Q(n,m)\big) \cdot \langle p \rangle}{\langle m-p \rangle \cdot \langle n-p \rangle \cdot \langle p-n-m \rangle}
\end{equation}
where we recall that $\widetilde{\delta}=\chi_{[0,1]}$ is the
characteristic function of the interval $[0,1]$.

Let us now show that \emph{$I(\tau,p)$ is not uniformly bounded in
  $\tau$ and $p$ in the endpoint case.}  In order to do this, let
$\kappa \gg 1$ be an integer and let $p:=(\kappa,0)$. We choose $\tau
\in \mathbb{R}$ such that $|\tau+Q(p)| \leq 2$. Finally, we consider
only the part of the sum $I(\tau,p)$ in which $n=p$. We note that we
are then summing over all $m=(m_1,m_2) \in \mathbb{Z}^2$ such that:
\begin{equation}
  \notag
  \tau+Q(p)-2Q(p,m) \in [0,1]
\end{equation}
\begin{equation}
  \notag
  \Rightarrow -2Q(p,m) \in [-2,3]
\end{equation}
\begin{equation}
  \notag
  \Rightarrow \theta_1^2 \kappa \cdot m_1 \in [-2,1].
\end{equation}
If $\kappa$ is chosen to be sufficiently large, it follows that
$m_1=0$. In particular, for $p=(\kappa,0)$ and $|\tau+Q(p)| \leq 1$,
it follows that:
\begin{equation}
  \label{I_tau_p_endpoint_2D}
  I(\tau,p) \gtrsim \sum_{m_2 \in \mathbb{Z}} \frac{\kappa}{\sqrt{1+\kappa^2+m_2^2} \cdot \sqrt{1+m_2^2}} \gtrsim \int_{-\infty}^{+\infty} \frac{\kappa}{\sqrt{1+\kappa^2+x^2} \cdot \sqrt{1+x^2}}\,dx
\end{equation}
\begin{equation}
  \notag
  \gtrsim \int_{1 \leq |x| \leq \kappa} \frac{\kappa}{\sqrt{1+\kappa^2+x^2} \cdot \sqrt{1+x^2}} \,dx \gtrsim \int_{1 \leq |x| \leq\kappa} \frac{dx}{|x|} \sim \ln \kappa.
\end{equation}

\begin{remark}
  \label{sequence_cm2}
  We know from the calculation in \eqref{I_tau_p_endpoint_2D} and
  duality that there exists a sequence $(c_{m_2}) \in
  \ell^2(\mathbb{Z})$ such that $c_{m_2} \geq 0$ for all $m_2 \in
  \mathbb{Z}$ and:
  \begin{equation}
    \label{sequence_cm2_bound}
    \sum_{m_2 \in \mathbb{Z}} \frac{\kappa^{\frac{1}{2}}}{(1+\kappa^2+m_2^2)^{\frac{1}{4}} \cdot (1+m_2^2)^{\frac{1}{4}}} \cdot c_{m_2} \gtrsim \sqrt{\ln \kappa} \cdot (\sum_{m_2 \in \mathbb{Z}} c_{m_2}^2)^{\frac{1}{2}}
  \end{equation}
\end{remark}
In what follows, we will directly use the lower bound obtained in
\eqref{sequence_cm2_bound} to show that the spacetime estimate from
Proposition \ref{2D_spacetime_estimate} does not hold in the endpoint
case $\alpha=\frac{1}{2}$. In other words, we will not directly refer
to the fact that $I(\tau,p)$ is not uniformly bounded in $\tau$ and
$p$. Unlike in the arguments on the classical torus
(c.f. \cite[Proposition 3.12]{GSS}), where it was sufficient to get a
pointwise lower bound in the $\tau$ variable (and hence it was
possible to directly apply the unboundedness property of $I$), in the
setting of the general torus, we need to integrate in the $\tau$
variable over a finite interval and then estimate the obtained
integral from below. The reason for this change is the fact that
\emph{$\mathcal{U}_Q^{(k)}(t)$ is in general no longer periodic in
  time}. However, in the analysis, we can reduce to the case where the
integrand in the $\tau$ variable is bounded from below uniformly on a
finite interval. This is done in \eqref{tau1_integral} below. We can
then bound the obtained integral by using
\eqref{sequence_cm2_bound}. This is done in
\eqref{sequence_cm2_bound_application} below.

Let $\zeta \in L^1(\mathbb{R})$ be a function such that
$\widehat{\zeta} \geq 0$ on $\mathbb{R}$ and $\widehat{\zeta} \geq 1$
on $[-1,1]$. Such a function can be shown to exist (for example by
applying Lemma \ref{Fourier_Analysis_Lemma} below).  For
$\gamma^{(2)}_0: \Lambda^2 \times \Lambda^2 \rightarrow \mathbb{C}$,
we compute:

\begin{align*}
  & \big(\zeta(t)S^{(1,\frac{1}{2})}B_{1,2}\,\mathcal{U}_Q^{(2)}(t)\,\gamma^{(2)}_0\big)\,\widetilde{\,}\,(\tau,p;q)\\
  = &\big(\zeta(t)S^{(1,\frac{1}{2})}B^{+}_{1,2}\,\mathcal{U}_Q^{(2)}(t)\,\gamma^{(2)}_0\big)\,\widetilde{\,}\,(\tau,p;q)-\big(\zeta(t)S^{(1,\frac{1}{2})}B^{-}_{1,2}\,\mathcal{U}_Q^{(2)}(t)\,\gamma^{(2)}_0\big)\,\widetilde{\,}\,(\tau,p;q)\\
  =&\langle p \rangle^{\frac{1}{2}} \cdot \langle q
  \rangle^{\frac{1}{2}} \cdot \sum_{m,n \in \mathbb{Z}^2} \int
  [d^+_{n,m}(\tau_1,p;q)- d^-_{n,m}(\tau_1,p;q)]
  \widehat{\zeta}(\tau-\tau_1)d\tau_1
\end{align*}
where
\begin{align*}
  d^+_{n,m}(\tau_1,p;q):=&\widetilde{\delta}\big(\tau_1-Q(q)+Q(p)-2Q(n,m)\big) \cdot (\gamma^{(2)}_0)\,\widehat{\,}\,(p-m,p-n;q,p-n-m),\\
  d^-_{n,m}(\tau_1,p;q):=&\widetilde{\delta}\big(\tau_1+Q(p)-Q(q)-2Q(n,m)\big)
  \cdot (\gamma^{(2)}_0)\,\widehat{\,}\,(p,q-n-m;q-n,q-m).
\end{align*}
Here, and in the discussion that follows, $\widetilde{\,\cdot}$
applied to density matrices denotes the spacetime Fourier transform.
Let us now choose a specific $\gamma^{(2)}_0$. With the sequence
$(c_{m_2})$ as in Remark \ref{sequence_cm2}, we choose
$\gamma^{(2)}_0$ such that:

\begin{equation}
  \label{gamma^2_0}
  \langle (\kappa,-m_2) \rangle^{\frac{1}{2}} \cdot \langle (0,-m_2) \rangle^{\frac{1}{2}} \cdot (\gamma^{(2)}_0)\,\widehat{\,}\,\Big((\kappa,-m_2),(0,0);(0,0),(0,-m_2)\Big)=c_{m_2},
\end{equation}
for all $m_2 \in \mathbb{Z}$, and such that
$(\gamma^{(2)}_0)\,\widehat{\,}\,=0$ at all frequencies which are not
of the form $\Big((\kappa,-m_2),(0,0);(0,0),(0,-m_2)\Big)$ for some
$m_2 \in \mathbb{Z}$.

Let us now fix $\kappa \gg 1$ sufficiently large as above and let us
take $\bar{p}:=(\kappa,0), \bar{q}:=(0,0)$. Furthermore, we choose
$\bar{\tau} \in \mathbb{R}$ such that
$|\bar{\tau}-Q(\bar{q})+Q(\bar{p})|=|\bar{\tau}+Q(\bar{p})| \leq
1$. Since $(\gamma^{(2)}_0)\,\widehat{\,}\, \geq 0$, $\widehat{\zeta}
\geq 0$ on $\mathbb{R}$, and $\widehat{\zeta} \geq 1$ on $[-1,1]$, it
follows that:
\begin{align*}
  &\big(\zeta(t)S^{(1,\frac{1}{2})}B^{+}_{1,2}\,\mathcal{U}_Q^{(2)}(t)\,\gamma^{(2)}_0\big)\,\widetilde{\,}\,(\bar{\tau},\bar{p};\bar{q}) \\
  \gtrsim &\int_{|\tau_1-\bar{\tau}| \leq 1} \sum_{m,n \in
    \mathbb{Z}^2} d_{n,m}^+(\tau_1,\bar{p},\bar{q}) \cdot \langle
  \bar{p} \rangle^{\frac{1}{2}} \cdot \langle \bar{q}
  \rangle^{\frac{1}{2}}d\tau_1
\end{align*}
In the above expression, we are integrating over the set of $\tau_1$
for which $|\tau_1+Q(\bar{p})|\leq 2$. By our choice of $\kappa \gg 1$
as before, it follows that the above integrand is:
\begin{align}
  \label{tau1_integral}
  \gtrsim & \sum_{m_2 \in \mathbb{Z}} \int_{|\tau_1-\bar{\tau}| \leq 1} \kappa^{\frac{1}{2}} \cdot (\gamma^{(2)}_0)\,\widehat{\,}\,\big((\kappa,-m_2),(0,0);(0,0),(0,-m_2)\big)\,d \tau_1\\
  \gtrsim &\sum_{m_2 \in \mathbb{Z}} \frac{\kappa^{\frac{1}{2}} \cdot
    \langle (\kappa,-m_2) \rangle^{\frac{1}{2}} \cdot \langle (0,-m_2)
    \rangle^{\frac{1}{2}}}{(1+\kappa^2+m_2^2)^{\frac{1}{4}} \cdot
    (1+m_2^2)^{\frac{1}{4}}}\cdot
  (\gamma^{(2)}_0)\,\widehat{\,}\,\big((\kappa,-m_2),(0,0);(0,0),(0,-m_2)\big)\notag\\
  =&\sum_{m_2 \in \mathbb{Z}} \frac{\kappa^{\frac{1}{2}}}{(1+\kappa^2+m_2^2)^{\frac{1}{4}} \cdot (1+m_2^2)^{\frac{1}{4}}}  \cdot c_{m_2}\notag\\
  \label{sequence_cm2_bound_application}
  \gtrsim & \sqrt{\ln \kappa} \cdot (\sum_{m_2 \in \mathbb{Z}}
  c_{m_2}^2)^{\frac{1}{2}}
\end{align}
by using \eqref{sequence_cm2_bound} from Remark \ref{sequence_cm2}.
By construction of $\gamma^{(2)}_0$, it follows that the above
expression equals $\sqrt{\ln \kappa} \cdot
\|S^{(2,\frac{1}{2})}\gamma^{(2)}_0\|_{L^2(\Lambda^2 \times
  \Lambda^2)}$. Consequently, for all $\bar{\tau}$ with
$|\bar{\tau}+Q(\bar{p})| \leq 1$, it is the case that:
\begin{equation}
  \label{B+12_2D}
  \big(\zeta(t)S^{(1,\frac{1}{2})}B_{1,2}^{+}\,\mathcal{U}_Q^{(2)}(t)\,\gamma^{(2)}_0\big)\,\widetilde{\,}\,(\bar{\tau},\bar{p};\bar{q}) \gtrsim \sqrt{\ln \kappa} \cdot \|S^{(2,\frac{1}{2})} \gamma^{(2)}_0\|_{L^2(\Lambda^2 \times \Lambda^2)}.
\end{equation}

Let us now look at the contribution from $B_{1,2}^{-}$.
\begin{align*}
  &\big(\zeta(t) S^{(1,\frac{1}{2})} B_{1,2}^{-}\,\mathcal{U}_Q^{(2)}(t)\,\gamma^{(2)}_0\big)\,\widetilde{\,}\,(\bar{\tau},\bar{p};\bar{q})\\
  =& \langle \bar{p} \rangle^{\frac{1}{2}} \cdot \langle \bar{q}
  \rangle^{\frac{1}{2}} \cdot \sum_{m,n \in \mathbb{Z}^2} \int
  d^-_{n,m}(\tau_1,\bar{p};\bar{q}) \cdot
  \widehat{\zeta}(\bar{\tau}-\tau_1) \, d\tau_1
\end{align*}
By construction, the summand corresponding to $(m,n) \in \mathbb{Z}^2
\times \mathbb{Z}^2$ equals zero unless:
\begin{equation}
  \notag
  \begin{cases}
    \bar{p}=(\kappa,-\widetilde{m}_2)\\
    \bar{q}-m-n=(0,0)\\
    \bar{q}-n=(0,0)\\
    \bar{q}-m=(0,-\widetilde{m}_2)
  \end{cases}
\end{equation}
for some $\widetilde{m}_2 \in \mathbb{Z}$. In particular, it must be
the case that $m=n=\widetilde{m}_2=(0,0)$. Consequently:
\begin{align*}
  &  \big(\zeta(t) S^{(1,\frac{1}{2})} B_{1,2}^{-}\,\mathcal{U}_Q^{(2)}(t)\,\gamma^{(2)}_0\big)\,\widetilde{\,}\,(\bar{\tau},\bar{p};\bar{q})\\
  = &\langle \bar{p} \rangle^{\frac{1}{2}} \cdot \langle \bar{q}
  \rangle^{\frac{1}{2}} \cdot \int
  \widetilde{\delta}\big(\tau_1+Q(\bar{p})\big) \cdot
  (\gamma^{(2)}_0)\,\widehat{\,}\,\big(\bar{p},(0,0);(0,0),(0,0)\big)
  \cdot \widehat{\zeta}(\bar{\tau}-\tau_1) \, d\tau_1.
\end{align*}
Hence:
\begin{equation}
  \label{B-12_2D}
  \big|\big(\zeta(t)S^{(1,\frac{1}{2})}B_{1,2}^{-}\,\mathcal{U}_Q^{(2)}(t)\,\gamma^{(2)}_0\big)\,\widetilde{\,}\,(\bar{\tau},\bar{p};\bar{q})\big| \lesssim \|S^{(2,\frac{1}{2})} \gamma^{(2)}_0\|_{L^2(\Lambda^2 \times \Lambda^2)}.
\end{equation}

From \eqref{B+12_2D} and \eqref{B-12_2D}, it follows that:
\begin{equation}
  \label{B12_2D}
  \big|\big(\zeta(t)S^{(1,\frac{1}{2})}B_{1,2}\,\mathcal{U}_Q^{(2)}(t)\,\gamma^{(2)}_0\big)\,\widetilde{\,}\,(\bar{\tau},\bar{p};\bar{q})\big| \gtrsim \sqrt{\ln \kappa} \cdot \|S^{(2,\frac{1}{2})} \gamma^{(2)}_0\|_{L^2(\Lambda^2 \times \Lambda^2)}.
\end{equation}
We recall that $\kappa \gg 1$ is chosen to be sufficiently large
$\bar{p}:=(\kappa,0), \bar{q}:=(0,0)$ and $|\bar{\tau}+Q(\bar{p})|
\leq 1$. By Plancherel's Theorem and \eqref{B12_2D}, it follows that:

\begin{align}
  \notag &
  \|\zeta(t)S^{(1,\frac{1}{2})}B_{1,2}\,\mathcal{U}_Q^{(2)}(t)\,\gamma^{(2)}_0\|_{L^2(\mathbb{R}
    \times \Lambda \times \Lambda)}\\
  \sim
  &\|(\zeta(t)S^{(1,\frac{1}{2})}B_{1,2}\,\mathcal{U}_Q^{(2)}(t)\,\gamma^{(2)}_0)\,\widetilde{\,}\,\|_{L^2(\mathbb{R})
    \ell^2(\mathbb{Z}^2 \times \mathbb{Z}^2)} \notag \\
  \label{Endpoint_estimate1_2D}
  \gtrsim & \sqrt{\ln \kappa} \cdot
  \|S^{(2,\frac{1}{2})}\gamma^{(2)}_0\|_{L^2(\Lambda^2 \times
    \Lambda^2)}.
\end{align}
We can now prove Proposition \ref{Lower_bound_2D}.

\begin{proof}[Proof of Proposition \ref{Lower_bound_2D}] Let
  $\gamma^{(2)}_0$ be as in \eqref{gamma^2_0}. We take $\kappa \gg 1$
  as above and $\delta$ and $\zeta$ as in Lemma
  \ref{Fourier_Analysis_Lemma}.  The lemma follows from
  \eqref{Endpoint_estimate1_2D} as well as from the fact that
  $\supp\,\zeta \subseteq [-\delta,\delta]$ and $|\zeta|
  \lesssim_{\,\delta} 1$. We note that $\delta$ is indeed independent
  of $\kappa$.
\end{proof}

\subsection{A conditional uniqueness result}
\label{A conditional uniqueness result in 2D}

Let us fix $\alpha>\frac{1}{2}$. Let us first consider the following
class of density matrices on the torus $\Lambda=\mathbb{T}^2$:

\begin{definition}
  \label{mathcalA}
  Let $\mathcal{A}$ denote the class of all time-dependent sequences
  $\Gamma(t)=(\gamma^{(k)}(t))$, where each $\gamma^{(k)}:
  \mathbb{R}_t \times \Lambda^k \times \Lambda^k \rightarrow
  \mathbb{C}$ satisfies:

  \begin{itemize}
  \item[i)]
    $\gamma^{(k)}(t,x_{\sigma(1)},\ldots,x_{\sigma(k)};x_{\sigma(1)}',\ldots,x_{\sigma(k)}')=\gamma^{(k)}(t,x_1,\ldots,x_k;x_1',\ldots,x_k')$,
    \newline for all $t \in \mathbb{R}, x_1,\ldots,x_k, x'_1, \ldots,
    x'_k \in \Lambda$, and for all $\sigma \in S^k$.
  \item[ii)] There exist positive and continuous functions $f,g :
    \mathbb{R} \rightarrow \mathbb{R}$, which are independent of $k$,
    such that for all $t \in \mathbb{R}$ and for all $j \in
    \{1,2,\ldots,k\}$:
    \begin{equation}
      \notag
      \int_{t-g(t)}^{t+g(t)} \|S^{(k,\alpha)}B_{j,k+1}(\gamma^{(k+1)})(s)\|_{L^2(\Lambda^k \times \Lambda^k)} ds \leq f^{k+1}(t).
    \end{equation}
  \end{itemize}
\end{definition}
For future reference, we will define the class $\mathcal{A}$
analogously on $\mathbb{T}^d$ for $d \geq 2$.

It is possible to argue as in \cite{GSS,KSS,KM} to deduce that:
\begin{proposition}
  \label{Lambda_conditional_uniqueness}
  Solutions to the Gross-Pitaevskii hierarchy on $\Lambda$ with a
  modified Laplacian \eqref{GP} are unique in the class
  $\mathcal{A}$. More precisely, given two solutions in $\mathcal{A}$
  with the same initial data, these two solutions have to be equal.
\end{proposition}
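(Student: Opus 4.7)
The plan is to follow the by-now-standard Klainerman--Machedon Duhamel iteration scheme, adapted to the modified free evolution $\mathcal{U}_Q^{(k)}(t)$, and powered by the sharp spacetime bound \eqref{2D_spacetime_estimate_bound}. Let $\Gamma_1=(\gamma_1^{(k)})_k$ and $\Gamma_2=(\gamma_2^{(k)})_k$ be two solutions in $\mathcal{A}$ with the same initial data, and set $\delta^{(k)}(t):=\gamma_1^{(k)}(t)-\gamma_2^{(k)}(t)$. Each $\delta^{(k)}$ satisfies the Duhamel identity
\begin{equation*}
\delta^{(k)}(t)=-ib_0\sum_{j=1}^{k}\int_0^t \mathcal{U}_Q^{(k)}(t-t_1)\,B_{j,k+1}\big(\delta^{(k+1)}(t_1)\big)\,dt_1,
\end{equation*}
since the initial contribution cancels. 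Iterating this identity $n$ times expresses $\delta^{(k)}(t)$ as a sum of $k(k+1)\cdots(k+n-1)$ nested spacetime integrals of the form $\mathcal{U}_Q^{(k)}(t-t_1)B_{j_1,k+1}\mathcal{U}_Q^{(k+1)}(t_1-t_2)B_{j_2,k+2}\cdots B_{j_n,k+n}(\delta^{(k+n)}(t_n))$.

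The first real step is the board game reduction of Klainerman--Machedon: one reorganizes the $k(k+1)\cdots(k+n-1)$ summands into a union of equivalence classes, each indexed by an upper-echelon matrix (equivalently, a binary tree), producing at most $C_0^{\,n}$ representative terms where each representative is a spacetime integral over a simplex with a fixed collision pattern. This is a purely combinatorial/symmetry argument that relies only on property (i) of Definition \ref{mathcalA}.

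Next I would estimate a single representative term. Applying $S^{(k,\alpha)}$ on the left and using that $\mathcal{U}_Q^{(k)}(t-t_1)$ is an $L^2$-isometry that commutes with $S^{(k,\alpha)}$, the representative reduces to bounding
\begin{equation*}
\Big\|S^{(k,\alpha)}\!\int\!\cdots\!\int B_{j_1,k+1}\mathcal{U}_Q^{(k+1)}(\cdot)B_{j_2,k+2}\cdots B_{j_n,k+n}\big(\delta^{(k+n)}(t_n)\big)\,dt_1\cdots dt_n\Big\|_{L^\infty_t L^2_x}.
\end{equation*}
Pulling the outer $L^\infty_t$ inside by a trivial $L^\infty_t \leq L^2_t$ step on a unit interval, and then applying Proposition \ref{2D_spacetime_estimate} to each factor $S^{(k+\ell,\alpha)}B_{j_\ell,k+\ell+1}\mathcal{U}_Q^{(k+\ell+1)}$ (after absorbing the time shifts into the initial data at the appropriate tree nodes and using the translation invariance of $\mathcal{U}_Q$), each collision operator contributes a constant $C$. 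Thus a single representative is bounded by $C^n \cdot T^{n/2}\cdot \sup_{s}\|S^{(k+n,\alpha)}\delta^{(k+n)}(s)\|_{L^2}$, where $T$ is the length of the time interval; combined with the board-game count $C_0^{\,n}$ and the a priori bound (ii) of Definition \ref{mathcalA} applied to both $\gamma_1$ and $\gamma_2$ to control $\|S^{(k,\alpha)}\delta^{(k+n)}\|$ by $2 f(t)^{k+n}$, we obtain
\begin{equation*}
\|S^{(k,\alpha)}\delta^{(k)}(t)\|_{L^2}\;\leq\;(CC_0 T^{1/2})^n\,2f(t)^{k+n}.
\end{equation*}

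The main subtlety is the choice of $T$. If we take $T_0 := \min\{g(t),(2CC_0 f(t))^{-2}\}$, then $(CC_0 T_0^{1/2}f(t))^n\to 0$ as $n\to\infty$, and we conclude $\delta^{(k)}\equiv 0$ on $[t-T_0,t+T_0]$ for every $k$. Because $T_0$ depends continuously on $t$ through $f(t)$ and $g(t)$ from Definition \ref{mathcalA} and not on $k$, a standard continuity-in-$t$ argument extends this vanishing to all of $\mathbb{R}$. The main obstacle I anticipate is bookkeeping the iterated application of Proposition \ref{2D_spacetime_estimate}: one must verify carefully that after freezing the outer time variables, the inner collisional expression is of the form $S^{(k+\ell,\alpha)}B_{j_\ell,k+\ell+1}\mathcal{U}_Q^{(k+\ell+1)}(\cdot)\sigma^{(k+\ell+1)}$ so that \eqref{2D_spacetime_estimate_bound} applies cleanly, and that the time variables nest correctly inside a bounded interval so that the $[0,1]$ spacetime norm on the left controls the $L^2_t$ norm needed at the next level.
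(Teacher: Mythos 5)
Your proposal follows the same Klainerman--Machedon route that the paper invokes: Duhamel iteration, board-game reduction of the $\mathcal{O}((k+n)!/k!)$ terms to $C_0^{\,n}$ representatives, iterated application of the spacetime bound in Proposition~\ref{2D_spacetime_estimate}, the a priori bound (ii) to close at the innermost level, and a short-time contraction extended by continuity in $t$; the paper's proof is precisely this argument, stated as "analogous to \cite{GSS,KSS,KM}" together with the one new observation (which you implicitly cover by noting the board game is pure combinatorics in the $x_j$-variables) that permuting $\Lambda$-variables never mixes components, so the anisotropy of $\Delta_Q$ is irrelevant to the reshuffling. One small bookkeeping correction: the a priori bound in Definition~\ref{mathcalA} controls $\int\|S^{(k+n-1,\alpha)}B_{j_n,k+n}\gamma_i^{(k+n)}\|\,dt_n$ rather than $\sup_s\|S^{(k+n,\alpha)}\delta^{(k+n)}(s)\|$, so the spacetime estimate is applied only $n-1$ times and the last collision operator is absorbed by assumption~(ii) directly; this does not change the final $(CC_0T^{1/2}f(t))^n$ bound.
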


\begin{proof}
  Let us observe that the boardgame argument from \cite{KM} still
  applies on $\Lambda=\mathbb{T}^2$ with the Laplacian
  $\Delta_Q$. Namely, in the boardgame argument, one interchanges the
  different $\Lambda$ variables without interchanging their
  components. Hence, the fact that the operator $\Delta_Q$ acts
  differently in each component does not affect the argument. The rest
  of the proof then follows by using the spacetime bound from
  Proposition \ref{2D_spacetime_estimate} analogously as in
  \cite{GSS,KSS,KM}. We will omit the details.
\end{proof}

We are more interested in obtaining uniqueness for the
Gross-Pitaevskii hierarchy \eqref{GP1} on $\Lambda_2$.  Let us recall
the scaling transformation given by \eqref{rescaling}. In this way, we
obtain two sequences $\Gamma(t)=(\gamma^{(k)}(t))$, a sequence of
density matrices on $\Lambda$ and
$\widetilde{\Gamma}(t)=(\widetilde{\gamma}^{(k)}(t))$, a sequence of
density matrices on $\Lambda_2$. From Lemma \ref{correspondence}, we
know that $\Gamma(t)$ solves \eqref{GP} if and only if
$\widetilde{\Gamma}(t)$ solves \eqref{GP1}. Let us now note another
correspondence result between $\Gamma$ and $\widetilde{\Gamma}$:

\begin{lemma}
  \label{correspondence2}
  $\Gamma$ belongs to the class $\mathcal{A}$ if and only if
  $\widetilde{\Gamma}$ belongs to the class $\widetilde{\mathcal{A}}$.
\end{lemma}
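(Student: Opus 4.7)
The plan is to verify the two defining conditions (i) and (ii) of the classes $\mathcal{A}$ and $\widetilde{\mathcal{A}}$ separately. Condition (i) is symmetric in the two settings essentially by inspection: the rescaling \eqref{rescaling} acts componentwise on each particle coordinate, i.e.\ $x_j \mapsto \widetilde{x}_j$ depends only on $x_j$, so a permutation $\sigma \in S^k$ of the first $k$ variables of $\widetilde{\gamma}^{(k)}(t,\cdot;\cdot)$ corresponds exactly to a permutation by $\sigma$ of the first $k$ variables of $\gamma^{(k)}(t,\cdot;\cdot)$, and similarly for the primed variables. Hence $\widetilde{\gamma}^{(k)}$ is permutation-symmetric if and only if $\gamma^{(k)}$ is.

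For condition (ii), the first step is to use \eqref{Bjk_gamma_gamma_tilde}, which says that $B_{j,k+1}(\widetilde{\gamma}^{(k+1)})$ on $\Lambda_d$ is simply the rescaling (in the sense of \eqref{rescaling}) of $B_{j,k+1}(\gamma^{(k+1)})$ on $\Lambda$. Combining this with \eqref{Fourier_transform_gamma_tilde}, we obtain for each $s \in \mathbb{R}$ and $j \in \{1,\ldots,k\}$ the identity
\[
\widehat{B_{j,k+1}(\widetilde{\gamma}^{(k+1)})(s)}(\vec{\xi}_k;\vec{\xi}'_k) = \Big(\prod_{i=1}^d \theta_i\Big)^{-2k}\,\widehat{B_{j,k+1}(\gamma^{(k+1)})(s)}(\widetilde{\vec{\xi}}_k;\widetilde{\vec{\xi}}'_k),
\]
where $\widetilde{\xi}_j,\widetilde{\xi}'_j$ are given by \eqref{rescaling_frequency1}--\eqref{rescaling_frequency2}. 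Applying Parseval's identity on $\Lambda_d^k \times \Lambda_d^k$ (whose volume differs from that of $\Lambda^k \times \Lambda^k$ by the factor $(\prod_i \theta_i)^{-2k}$) and then changing summation variables from $\vec{\xi}_k,\vec{\xi}'_k$ to $\widetilde{\vec{\xi}}_k,\widetilde{\vec{\xi}}'_k$, we arrive at an identity of the form
\[
\|S^{(k,\alpha)} B_{j,k+1}(\widetilde{\gamma}^{(k+1)})(s)\|_{L^2(\Lambda_d^k \times \Lambda_d^k)}^2 = \Big(\prod_{i=1}^d \theta_i\Big)^{-2k}\!\!\sum_{\widetilde{\vec{\xi}}_k,\widetilde{\vec{\xi}}'_k} \!\!A^\alpha_\theta(\widetilde{\vec{\xi}}_k,\widetilde{\vec{\xi}}'_k)\,|\widehat{B_{j,k+1}(\gamma^{(k+1)})(s)}|^2,
\]
where $A^\alpha_\theta$ is the product of Japanese brackets $\prod_{\ell} \langle \xi_\ell \rangle^{2\alpha}\langle \xi'_\ell \rangle^{2\alpha}$ on $\Lambda_d$ expressed in terms of $\widetilde{\xi}_\ell,\widetilde{\xi}'_\ell$.

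The key elementary observation is that the Japanese brackets on $\Lambda_d$ and on $\Lambda$ are comparable uniformly in $k$ at the one-particle level: there exists a constant $c=c(\theta_1,\ldots,\theta_d)>0$ such that $c^{-1}\langle \widetilde{\xi}_\ell \rangle \leq \langle \xi_\ell \rangle \leq c\,\langle \widetilde{\xi}_\ell \rangle$ for every $\ell$, and the analogous bound for $\xi'_\ell$. Taking the product in $\ell$ yields
\[
C_\theta^{-k} \leq \frac{A^\alpha_\theta(\widetilde{\vec{\xi}}_k,\widetilde{\vec{\xi}}'_k)}{\prod_{\ell=1}^k \langle \widetilde{\xi}_\ell\rangle^{2\alpha}\langle \widetilde{\xi}'_\ell\rangle^{2\alpha}} \leq C_\theta^k
\]
for a constant $C_\theta = C_\theta(\alpha)>0$ independent of $k$. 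Inserting this into the displayed identity and applying Parseval on $\Lambda^k \times \Lambda^k$ yields, up to an overall constant $K_\theta^k$ depending only on $\theta_1,\ldots,\theta_d,\alpha$,
\[
K_\theta^{-k}\,\|S^{(k,\alpha)} B_{j,k+1}(\gamma^{(k+1)})(s)\|_{L^2(\Lambda^k \times \Lambda^k)} \leq \|S^{(k,\alpha)} B_{j,k+1}(\widetilde{\gamma}^{(k+1)})(s)\|_{L^2(\Lambda_d^k \times \Lambda_d^k)} \leq K_\theta^k\,\|\cdots\|_{L^2(\Lambda^k \times \Lambda^k)}.
\]

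Given these uniform two-sided bounds, the equivalence of the two classes is immediate. If $\Gamma \in \mathcal{A}$ with bounds $f,g$, then setting $\widetilde{g}(t):=g(t)$ and $\widetilde{f}(t):= K_\theta \cdot f(t)$, the bound \eqref{Fourier_transform_gamma_tilde}-based estimate above, integrated in $s$ over $[t-g(t),t+g(t)]$, shows that $\widetilde{\Gamma} \in \widetilde{\mathcal{A}}(\alpha)$. The converse direction is identical, using the lower bound with $K_\theta^{-k}$. The main bookkeeping point — and the only place where care is needed — is to confirm that the multiplicative constant relating the two $L^2$-norms factorizes as a $k$-th power of a fixed $\theta$-dependent constant, which is precisely what the componentwise equivalence of Japanese brackets provides; once this is in place, the $k$-th power can be absorbed into $\widetilde{f}^{\,k+1}$ (respectively $f^{k+1}$).
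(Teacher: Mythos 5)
Your proposal is correct and follows essentially the same approach as the paper: reduce to a Fourier-side identity via \eqref{rescaling} and \eqref{Fourier_transform_gamma_tilde}, observe that the Japanese brackets on $\Lambda_d$ and $\Lambda$ are comparable at the one-particle level, and obtain a constant of the form $C^k$ that can be absorbed into $f^{k+1}$ (resp.\ $\widetilde{f}^{\,k+1}$). The paper carries out step 2 by expanding $\widehat{B_{1,k+1}^{+}\widetilde{\gamma}^{(k+1)}}$ directly as a lattice sum rather than invoking \eqref{Bjk_gamma_gamma_tilde} first (which is why its prefactor is $(\theta_1\theta_2)^{-2(k+1)}$ rather than your $(\prod_i\theta_i)^{-2k}$), but this difference is immaterial since both are absorbed in the same way; you also make explicit the two-sided bound and the choice $\widetilde{f}=K_\theta f$, $\widetilde{g}=g$, which the paper leaves to the reader.
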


\begin{proof}
  We will show that $\Gamma \in \mathcal{A}$ implies that
  $\widetilde{\Gamma} \in \widetilde{\mathcal{A}}$. The reverse
  implication is proved in an analogous way.  Suppose that $\Gamma \in
  \mathcal{A}$. We will show that $\widetilde{\Gamma} \in
  \widetilde{\mathcal{A}}$. The fact that condition $i)$ is satisfied
  is immediate.  We need to check the a priori bound given by
  $ii)$. In order to do this, we compute, for all $t$:

\begin{align*}
  &\big(B_{1,k+1}^{+} \widetilde{\gamma}^{(k+1)}\big)\,\widehat{\,}\,(t,\xi_1,\ldots,\xi_k;\xi'_1,\ldots,\xi'_k)\\
  =&\sum_{\eta,\eta' \in \theta_1 \mathbb{Z} \times \theta_2
    \mathbb{Z}}
  (\widetilde{\gamma}^{(k+1)})\,\widehat{\,}\,(t,\xi_1-\eta+\eta',
  \xi_2,\ldots,\xi_k, \eta;\xi'_1,\xi'_2,\ldots,\xi'_k,\eta')\\
  =&\sum_{\widetilde{\eta},\widetilde{\eta}\,' \in \mathbb{Z}^2}
  \frac{1}{(\theta_1 \theta_2)^{2k+2}}  \cdot (\gamma^{(k+1)})\,\widehat{\,}\,\Big(t,\widetilde{\xi}-\widetilde{\eta}+\widetilde{\eta}\,',\widetilde{\xi}_2,\ldots,\widetilde{\xi}_k,\widetilde{\eta};\widetilde{\xi}_1',\widetilde{\xi}_2',\ldots,\widetilde{\xi}_k',\widetilde{\eta}\,'\Big)\\
  =&\frac{1}{(\theta_1 \theta_2)^{2k+2}} \big(B_{1,k+1}^{+}
  \gamma^{(k+1)}\big)\,\widehat{\,}\,(t,\widetilde{\xi}_1,\ldots,\widetilde{\xi}_k;\widetilde{\xi}_1',\ldots,\widetilde{\xi}_k').
\end{align*}
Here, we have used \eqref{Fourier_transform_gamma_tilde}.
By an analogous calculation, it follows that:
\begin{align*}
  &\big(B_{j,k+1} \widetilde{\gamma}^{(k+1)}\big)\,\widehat{\,}\,(t,\xi_1,\ldots,\xi_k;\xi'_1,\ldots,\xi'_k)\\
  =& \frac{1}{(\theta_1 \theta_2)^{2k+2}} \big(B_{j,k+1}
  \gamma^{(k+1)}\big)\,\widehat{\,}\,(t,\widetilde{\xi}_1,\ldots,\widetilde{\xi}_k;\widetilde{\xi}_1',\ldots,\widetilde{\xi}_k'),
\end{align*}
for all $j \in \{1,2,\ldots,k\}$.  Consequently,
\begin{equation}
  \notag
  \|S^{(k,\alpha)} B_{j,k+1} \widetilde{\gamma}^{(k+1)}(t)\|_{L^2(\Lambda_2^k \times \Lambda_2^k)}  \leq C_1^k \cdot \|S^{(k,\alpha)} B_{j,k+1} \gamma^{(k+1)}(t)\|_{L^2(\Lambda^k \times \Lambda^k)} 
\end{equation}
for some constant $C_1$ which depends only on
$\alpha,\theta_1,\theta_2$. The claim now follows.
\end{proof}

\begin{remark}
  The result of Lemma \ref{correspondence2} extends in general to $d$
  dimensions.
\end{remark}

We can now deduce the main conditional uniqueness result of this
section:
\begin{theorem}
  \label{uniqueness_Lambda2_2D}
  Solutions to the Gross-Pitaevskii hierarchy \eqref{GP1} on
  $\Lambda_2$ are unique in the class $\widetilde{\mathcal{A}}$.
\end{theorem}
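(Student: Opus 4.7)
The plan is to reduce the uniqueness statement on the general rectangular torus $\Lambda_2$ to the uniqueness statement on the standard torus $\Lambda = \mathbb{T}^2$ with the modified Laplacian $\Delta_Q$, which has already been established as Proposition \ref{Lambda_conditional_uniqueness}. The three ingredients that do essentially all the work are already in place: the correspondence of solutions (Lemma \ref{correspondence}), the correspondence of a priori classes (Lemma \ref{correspondence2}), and the conditional uniqueness on $\Lambda$ in the class $\mathcal{A}$ (Proposition \ref{Lambda_conditional_uniqueness}).

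Concretely, suppose $\widetilde{\Gamma}_1(t) = (\widetilde{\gamma}_1^{(k)}(t))$ and $\widetilde{\Gamma}_2(t) = (\widetilde{\gamma}_2^{(k)}(t))$ are two sequences in $\widetilde{\mathcal{A}}(\alpha)$ solving \eqref{GP1} on $\Lambda_2$ with the same initial data $\widetilde{\gamma}_{1,0}^{(k)} = \widetilde{\gamma}_{2,0}^{(k)}$ for every $k$. First I would apply the rescaling \eqref{rescaling} componentwise and in time, producing sequences $\Gamma_1(t) = (\gamma_1^{(k)}(t))$ and $\Gamma_2(t) = (\gamma_2^{(k)}(t))$ of density matrices on $\Lambda = \mathbb{T}^2$. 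By Lemma \ref{correspondence}, both $\Gamma_1$ and $\Gamma_2$ are solutions to the modified hierarchy \eqref{GP} on $\Lambda$, and clearly they share the same rescaled initial data $\gamma_{1,0}^{(k)} = \gamma_{2,0}^{(k)}$.

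Next I would invoke Lemma \ref{correspondence2}, which upgrades the a priori bound from $\widetilde{\mathcal{A}}(\alpha)$ to $\mathcal{A}(\alpha)$: since $\widetilde{\Gamma}_1, \widetilde{\Gamma}_2 \in \widetilde{\mathcal{A}}(\alpha)$, the rescaled sequences $\Gamma_1, \Gamma_2$ belong to $\mathcal{A}(\alpha)$. At this point Proposition \ref{Lambda_conditional_uniqueness} applies directly, using the hypothesis $\alpha > \tfrac{1}{2}$: the two solutions $\Gamma_1$ and $\Gamma_2$ to \eqref{GP} in $\mathcal{A}(\alpha)$ agree at $t=0$ and therefore coincide for all $t$. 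Undoing the bijective rescaling \eqref{rescaling} in the spatial variables then gives $\widetilde{\gamma}_1^{(k)}(t) = \widetilde{\gamma}_2^{(k)}(t)$ for every $k$ and every $t$, which is the desired uniqueness on $\Lambda_2$.

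There is essentially no genuine obstacle here; the proof is just a bookkeeping exercise in putting together the three preparatory results. The one point worth being careful about is the remark \ref{2D_spacetime_estimate_bound_Lambda2_remark}, which warns that attempting to work directly on $\Lambda_2$ would produce a constant depending exponentially on $k$ in the spacetime estimate \eqref{2D_spacetime_estimate_bound_Lambda2}, preventing a direct Klainerman--Machedon boardgame argument. The detour through the rescaled torus $\Lambda$ circumvents precisely this growth, since the spacetime estimate of Proposition \ref{2D_spacetime_estimate} on $\Lambda$ carries a $k$-independent constant and the $\theta$-dependent factors are absorbed into the rescaling bijection rather than into the norm bound. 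So all that really needs to be written down is that the rescaling preserves both the PDE (Lemma \ref{correspondence}) and the class membership (Lemma \ref{correspondence2}), whence Proposition \ref{Lambda_conditional_uniqueness} finishes the argument.
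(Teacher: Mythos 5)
Your proposal is correct and follows precisely the paper's own proof: rescale via \eqref{rescaling}, use Lemma \ref{correspondence} to transfer the PDE and Lemma \ref{correspondence2} to transfer the class membership, then invoke Proposition \ref{Lambda_conditional_uniqueness} on $\Lambda$. The paper's proof is just a terser version of the same bookkeeping.
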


\begin{proof}
  In order to prove this fact, we first recall that $\Gamma(t)$ and
  $\widetilde{\Gamma}(t)$ are related by the scaling transformation
  \eqref{rescaling}. We then apply Lemma \ref{correspondence}, Lemma
  \ref{correspondence2}, and Proposition
  \ref{Lambda_conditional_uniqueness}. The claim follows.
\end{proof}

\subsection{A rigorous derivation of the defocusing cubic nonlinear
  Schr\"{o}dinger equation on a general two-dimensional torus}
\label{Rigorous derivation 2D irrational torus}
In this subsection, we will obtain a rigorous derivation of the
defocusing cubic nonlinear Schr\"{o}dinger equation from many-body
quantum systems on general two-dimensional tori. Let us recall that,
in \cite{KSS}, this result was obtained on the classical torus
$\Lambda=\mathbb{T}^2$. We will now extend it to the case of the
spatial domain $\Lambda_2$, which can, in particular, be an
\emph{irrational torus}.
We will prove the following result:
\begin{theorem}
  \label{NLS_Lambda2}
  Let $V \in W^{2,\infty}(\Lambda_2)$ be such that $V \geq 0$,
  $\int_{\Lambda_2} V(x)\,dx=b_0>0$, and let $\beta \in
  (0,\frac{3}{4})$. Suppose that $(\psi_N)_N \in
  \mathop{\bigoplus}_{N} L^2(\Lambda_2^N)$ satisfies the properties of
  bounded energy per particle \eqref{Bounded energy per particle} and
  asymptotic factorization \eqref{Asymptotic factorization}. Then,
  there exists a sequence $N_j \rightarrow \infty$ such that for all
  $t \in \mathbb{R}$ and $k \in \mathbb{N}$:
  \begin{equation}
    \notag
    Tr\big|\gamma^{(k)}_{N_j,t}-|\phi_t \rangle \langle \phi_t\big|^{\otimes k}\big| \rightarrow 0\,\,\,\mbox{as}\,\,j \rightarrow \infty,
  \end{equation}
  where $\phi_t$ solves the defocusing cubic nonlinear Schr\"{o}dinger
  equation on $\Lambda_2$ with initial data $\phi$:
  \begin{equation}
    \notag
    \begin{cases}
      i \partial_t \phi_t + \Delta \phi_t=b_0 |\phi_t|^2 \phi_t\\
      \phi_t \big|_{t=0}=\phi.
    \end{cases}
  \end{equation}
\end{theorem}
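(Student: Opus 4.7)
The plan is to follow the standard two-step derivation scheme: first show that the sequence of reduced density matrices $(\gamma^{(k)}_{N,t})_N$ arising from the $N$-body dynamics on $\Lambda_2$ has subsequential weak-$\ast$ limits that solve the Gross-Pitaevskii hierarchy \eqref{GP1} and satisfy the a priori bound of class $\widetilde{\mathcal{A}}(\alpha)$ for some $\alpha>\tfrac12$, and then invoke the conditional uniqueness Theorem \ref{uniqueness_Lambda2_2D} to conclude that each limit equals the factorized solution $|\phi_t\rangle\langle\phi_t|^{\otimes k}$ with $\phi_t$ the defocusing cubic NLS flow on $\Lambda_2$. Since the factorized solutions belong to $\widetilde{\mathcal{A}}(\alpha)$ (given $\phi\in H^1$ and global well-posedness of the defocusing 2D NLS on $\Lambda_2$), uniqueness pins down the whole sequence, not just a subsequence, modulo the initial extraction.

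First I would transfer the problem to $\Lambda=\mathbb{T}^2$ with the modified Laplacian $\Delta_Q$ via the rescaling \eqref{rescaling}, using Lemma \ref{correspondence} and Lemma \ref{correspondence2}; this lets the whole analysis run on a fixed geometric torus while the number-theoretic irrationality is encoded in $Q$. Then I would carry out the KSS argument \cite{KSS} on $\Lambda$ with $\Delta_Q$ in place of $\Delta$: namely, derive a uniform Sobolev-type energy estimate of the form
\[
\langle \psi_N, (H_N+N)^k\psi_N\rangle \geq C_1^k N^k \operatorname{Tr}\bigl(S^{(k,1)}\gamma^{(k)}_{N,t}\bigr)^2,
\]
using positivity of $V$ and the bounded-energy-per-particle hypothesis \eqref{Bounded energy per particle}, together with the observation that $\Delta_Q$ is, up to a bounded multiplicative factor depending only on $\theta_1,\theta_2$, comparable to $\Delta$. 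This propagates in time via conservation of $\langle \psi_N, H_N^k\psi_N\rangle$ and yields the uniform $H^1$-type bound needed for weak-$\ast$ trace-class compactness of $(\gamma^{(k)}_{N,t})_N$ for each fixed $k$. A Cantor diagonal argument extracts a common subsequence $N_j\to\infty$ for all $k$.

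Next I would pass to the limit in the BBGKY hierarchy. Testing against smooth compactly supported observables on $\Lambda_2^k\times\Lambda_2^k$, the self-interaction term $\tfrac1N\sum_{\ell<j}[V_N(x_\ell-x_j),\gamma^{(k)}_{N,t}]$ vanishes by the $1/N$ prefactor, while the remaining collision term converges to $b_0\sum_{j=1}^k B_{j,k+1}\gamma^{(k+1)}(t)$, because $V_N(x_j-x_{k+1})\rightharpoonup b_0\,\delta(x_j-x_{k+1})$ in the appropriate weak sense and the regularity bounds from the previous step control the contraction. Thus the limit $\Gamma(t)=(\gamma^{(k)}(t))$ solves \eqref{GP1} on $\Lambda_2$. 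The a priori Sobolev bound then yields a spacetime bound of the form $\int_{t-g(t)}^{t+g(t)}\|S^{(k,\alpha)}B_{j,k+1}\gamma^{(k+1)}(s)\|_{L^2}\,ds\leq f^{k+1}(t)$ for some $\tfrac12<\alpha<1$ (by interpolating the $H^1$-bound with a trivial bound, or directly from the KSS $H^{1-}$-estimate), placing $\Gamma$ in $\widetilde{\mathcal{A}}(\alpha)$.

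The main obstacle is the derivation of the $\widetilde{\mathcal{A}}(\alpha)$ a priori bound, since on the irrational torus $\Lambda_2$ one cannot use the refined Strichartz estimates available on $\mathbb{T}^2$ with the same strength; the loss in the Strichartz inequality of \cite{B07} and the non-periodicity of $e^{it\Delta_Q}$ force one to work on short time windows $[t-g(t),t+g(t)]$ and to track constants carefully. However, because the uniqueness class only requires $\alpha>\tfrac12$, which is strictly below the energy regularity $\alpha=1$, the available energy bound together with an interpolation is enough. Once this is established, the factorized sequence $|\phi_t\rangle\langle\phi_t|^{\otimes k}$ lies in $\widetilde{\mathcal{A}}(\alpha)$ by the same energy method applied to the NLS solution, it solves \eqref{GP1} by construction, and it has the same initial data $|\phi\rangle\langle\phi|^{\otimes k}$ as the limit by \eqref{Asymptotic factorization} combined with the standard argument converting weak convergence of non-negative trace-class operators with convergent traces into trace-norm convergence. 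Theorem \ref{uniqueness_Lambda2_2D} then forces $\gamma^{(k)}(t)=|\phi_t\rangle\langle\phi_t|^{\otimes k}$, and another application of the trace convergence criterion upgrades the weak-$\ast$ convergence of $\gamma^{(k)}_{N_j,t}$ to trace-norm convergence.
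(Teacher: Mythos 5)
Your proposal follows the same two-step scheme the paper uses: carry the Kirkpatrick--Schlein--Staffilani compactness and convergence analysis over to the general torus to get subsequential limits that solve \eqref{GP1} and lie in $\widetilde{\mathcal{A}}(\alpha)$ for some $\tfrac12<\alpha<1$, and then invoke the conditional uniqueness of Theorem~\ref{uniqueness_Lambda2_2D} to identify the limit as the factorized solution. The one organizational difference is that you propose to rescale the entire $N$-body problem to $\Lambda=\mathbb{T}^2$ with $\Delta_Q$ \emph{before} running the KSS analysis, whereas the paper runs the limiting arguments of \cite[Sections~3--6]{KSS} directly on $\Lambda_2$ --- observing that those arguments use only Sobolev-embedding facts and no Diophantine or periodicity properties --- and reserves the rescaling machinery of \eqref{rescaling}, Lemma~\ref{correspondence}, and Lemma~\ref{correspondence2} purely for the uniqueness step, where the $k$-dependent constant discussed in Remark~\ref{2D_spacetime_estimate_bound_Lambda2_remark} is the real obstruction. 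Your route works but entails rescaling $H_N$ and the potential as well, which is extra bookkeeping the paper avoids.

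One point of confusion you should fix: the a priori bound placing the limit in $\widetilde{\mathcal{A}}(\alpha)$ does \emph{not} rely on Strichartz estimates, and the short time windows $[t-g(t),t+g(t)]$ are not forced by the non-periodicity of $e^{it\Delta_Q}$ or by the Strichartz loss of \cite{B07}. The KSS a priori bound comes from the energy estimate $\langle\psi_N,(H_N+N)^k\psi_N\rangle\gtrsim C^kN^k\,\mathrm{Tr}\bigl(S^{(k,1)}\gamma^{(k)}_{N,t}S^{(k,1)}\bigr)$ combined with Sobolev embedding, which propagates uniformly in time; accordingly the paper records that for this application $\widetilde{f}$ and $\widetilde{g}$ may be taken to be \emph{constant} functions, exactly as in \cite[Theorem~5.2]{KSS}. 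This does not create a gap in your argument --- your conclusion that the energy bound plus interpolation suffices is correct --- but the obstacle you identify is not where the irrationality of the torus actually enters; that happens only in the spacetime estimate underlying the uniqueness theorem.
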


\begin{proof}
  The proof of Theorem \ref{NLS_Lambda2} follows from the arguments
  given in \cite{KSS} combined with the uniqueness result of Theorem
  \ref{uniqueness_Lambda2_2D}. Namely, we recall that the limiting
  arguments presented in \cite[Sections 3-6]{KSS} are originally given
  in the setting of the classical torus. Nevertheless, since these
  arguments do not depend on any Diophantine properties of the
  frequencies, but just on Sobolev embedding type results, they
  directly carry over to the setting of a general torus. More
  precisely, the only place in the analysis of \cite{KSS} where the
  authors use the rationality of the torus is in the proof of the
  conditional uniqueness result \cite[Theorem 7.4]{KSS}, whose
  analogue on a general torus we have proven in Theorem
  \ref{uniqueness_Lambda2_2D} above.

  In particular, the analogue of \cite[Theorem 5.2]{KSS} on
  $\Lambda_2$ holds. This result implies that the density matrices
  obtained according to the limiting procedure belong to the class
  $\widetilde{\mathcal{A}}$ for $\alpha<1$ \emph{with $\widetilde{f}$
    and $\widetilde{g}$ being positive constant
    functions}. Consequently, we obtain that the limit has to be the
  factorized solution $(|\phi_t \rangle \langle \phi_t |^{\otimes
    k})_k$. We refer the reader to \cite[Section 2]{KSS} for a more
  precise outline of this procedure.
\end{proof}

\section{The three-dimensional problem}
\label{3D_problem}
In this section, we will consider the three-dimensional
problem. Throughout the section, $\Lambda$ will denote the
three-dimensional classical torus $\mathbb{T}^3$. In Subsection
\ref{The spacetime estimate in three dimensions}, we will prove a
three-dimensional analogue of the sharp spacetime estimate proved in
Subsection \ref{The spacetime estimate in two dimensions} above. In
Subsection \ref{A conditional uniqueness result 3D}, we will prove the
corresponding conditional uniqueness result. In particular, this
extends the uniqueness result in \cite{GSS} to general tori.  Finally,
in Subsection \ref{An unconditional uniqueness result 3D}, we will
prove an unconditional uniqueness result, which will allow us to
obtain a rigorous derivation of the defocusing cubic NLS on the
irrational torus as was done in the setting of the classical torus in
\cite{VS2}.

\subsection{The spacetime estimate in three dimensions}
\label{The spacetime estimate in three dimensions}
We will now prove a conditional uniqueness result for the
three-dimensional problem. In particular, we will extend the
uniqueness result of \cite{GSS} to general three-dimensional
tori. We will start by proving a spacetime estimate, which is the
three-dimensional analogue of Proposition
\ref{2D_spacetime_estimate}.
\begin{proposition}
  \label{3D_spacetime_estimate}
  Let $\alpha>1$ be given. There exists $C>0$, which depends only on
  $\alpha, \theta_1, \theta_2$ such that, for all $k \in \mathbb{N}$,
  and for all $\gamma_0^{(k+1)}: \Lambda^{k+1} \times \Lambda^{k+1}
  \rightarrow \mathbb{C}$, the following estimate holds:
  \begin{equation}
    \notag
    \|S^{(k,\alpha)} B_{j,k+1} \,\mathcal{U}_Q^{(k+1)}(t)\,\gamma_0^{(k+1)}\|_{L^2([0,1] \times \Lambda^k \times \Lambda^k)} \leq C \|S^{(k+1,\alpha)}\gamma_0^{(k+1)}\|_{L^2(\Lambda^{k+1} \times \Lambda^{k+1})}.
  \end{equation}
\end{proposition}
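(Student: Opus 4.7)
The plan is to imitate, in three dimensions, the argument just carried out for Proposition \ref{2D_spacetime_estimate}, replacing the $L^4$ oscillatory estimate of Lemma \ref{lem:l4} with its $L^6$ analogue from Corollary \ref{cor:lp}, and adjusting the geometric covering argument to account for the extra dimension. First I would reduce \eqref{2D_spacetime_estimate_bound} on $\Lambda=\mathbb{T}^3$ to the uniform boundedness in $(\tau,p)\in\mathbb{R}\times\mathbb{Z}^3$ of
\[
I(\tau,p)=\sum_{m,n\in\mathbb{Z}^3}\frac{\widetilde{\delta}\bigl(\tau+Q(p)-2Q(n,m)\bigr)\,\langle p\rangle^{2\alpha}}{\langle m-p\rangle^{2\alpha}\langle n-p\rangle^{2\alpha}\langle p-n-m\rangle^{2\alpha}},
\]
by exactly the same Cauchy--Schwarz reduction used in the 2D proof. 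The contribution $m=0$ or $n=0$ is uniformly bounded as soon as $\alpha>1$, so I restrict to $(\mathbb{Z}^3\setminus\{0\})^2$ and dyadically decompose with $|m-p|\sim 2^{j_1},\,|n-p|\sim 2^{j_2},\,|p-n-m|\sim 2^{j_3}$, defining $E_{\tau,p}(j)$ accordingly. Following the computations in \cite{GSS}, it suffices to prove
\begin{equation}\label{E_tau_p_bound_3D}
\#E_{\tau,p}(j)\lesssim_\epsilon 2^{(2+\epsilon)j_{\min}+(2+\epsilon)j_{\med}}
\end{equation}
for every $\epsilon>0$, since then summing the dyadic pieces against the weights $\langle p\rangle^{2\alpha}\prod_\ell\langle\cdot\rangle^{-2\alpha}$ gives convergence precisely in the range $\alpha>1$.

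To prove \eqref{E_tau_p_bound_3D} in the \textbf{main case} $j_3=j_{\min}$, I repeat the Fourier analytic trick from \cite{B07}: fix $\phi\in C_0^\infty(\mathbb{R})$ with $\widehat{\phi}\geq 0$ and $\widehat{\phi}\geq 1$ on $[0,1]$, bound $\#E_{\tau,p}(j)$ by the integral
\[
\int\Bigl[\!\!\mathop{\sum_{m,n\in\mathbb{Z}^3}}_{|m-p|\sim 2^{j_1},\,|n-p|\sim 2^{j_2},\,|p-m-n|\sim 2^{j_3}}\!\! e^{2iQ(n,m)t}\Bigr]e^{-i(\tau+Q(p))t}\phi(t)\,dt,
\]
make the change of variables $\eta=n-m,\;\eta'=n+m$ and, using $2Q(n,m)=\tfrac12(Q(\eta')-Q(\eta))$ together with the localisations $\eta\in C_{2^{j_1\vee j_2+1}}(0)\cap C_{2^{j_2\vee j_3+2}}(p)\cap C_{2^{j_1\vee j_3+2}}(-p)$ and $\eta'\in C_{2^{j_3}}(p)\cap C_{2^{j_1\vee j_2+1}}(2p)$, deduce the bound
\[
\#E_{\tau,p}(j)\lesssim\Bigl\|\sum_{\eta,\eta'} e^{\frac{i t}{2}(Q(\eta)-Q(\eta'))}\Bigr\|_{L^1_t(I)}.
\]
When $j_3=j_{\min}$, $\eta$ lies in a cube of sidelength $O(2^{j_{\med}})$ and $\eta'$ in a cube of sidelength $O(2^{j_{\min}})$. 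Factorising the exponential across the three spatial coordinates and applying Hölder with six $L^6_t$ factors then using Corollary~\ref{cor:lp} with $p=6$ (which yields $N^{4+\epsilon}$ inside each integral, hence $N^{2/3+\epsilon}$ after taking the sixth root), the product of the six one-dimensional bounds gives exactly $2^{(2+\epsilon)(j_{\min}+j_{\med})}$, as required.

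For the remaining \textbf{covering cases} $j_1=j_{\min}$ and (symmetrically) $j_2=j_{\min}$, I adapt Case~2 of the 2D proof. Partition $\mathbb{Z}^3$ into cubes $B_k$ of sidelength $2^{j_1}$; on each product box $B_k\times B_{k'}$ the main-case bound yields $\#(E_{\tau,p}(j)\cap(B_k\times B_{k'}))\lesssim 2^{(4+\epsilon)j_1}$. It remains to count the nonempty boxes using Lemma~\ref{lem:thick} in dimension $D=6$: the centres form a $2^{j_1}$-separated set in $\mathbb{R}^6$, so the number of nonempty boxes is $\lesssim 2^{-6j_1}|X_{2^{j_1}}|$, where $X_{2^{j_1}}$ is the $2^{j_1}$-thickening of $E_{\tau,p}(j)$. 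For $(m,n)\in E_{\tau,p}(j)$, $m$ lies in a ball of radius $O(2^{j_1})$, and for fixed $m$ the linear constraint $\tau+Q(p)-2Q(n,m)\in[0,1]$ forces $n$ into an $O(1)$-neighbourhood of a \emph{two-dimensional affine plane} in $\mathbb{R}^3$ intersected with a ball of radius $O(2^{\min(j_2,j_3)})=O(2^{j_{\med}})$. Thickening by $2^{j_1}$ along the normal direction and using the ball constraint in the other two directions gives
\[
|X_{2^{j_1}}|\lesssim 2^{3j_1}\cdot 2^{j_1}\cdot 2^{2j_{\med}}=2^{4j_1+2j_{\med}},
\]
hence the number of nonempty boxes is $\lesssim 2^{-2j_1+2j_{\med}}$, and multiplying by the per-box bound $2^{(4+\epsilon)j_1}$ produces exactly $2^{(2+\epsilon)j_1+2j_{\med}}=2^{(2+\epsilon)(j_{\min}+j_{\med})}$. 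Case~3 ($j_2=j_{\min}$) follows by the $m\leftrightarrow n$ symmetry of $I(\tau,p)$. The main obstacle is the geometric accounting in these covering cases: one has to keep careful track of which directions are thickened by $2^{j_1}$ versus bounded by $2^{j_{\med}}$, since in three spatial dimensions the constraint $\tau+Q(p)-2Q(n,m)\in[0,1]$ only cuts out a \emph{two}-dimensional plane (rather than a line as in 2D), so the volume budget is tighter and the $L^6$ rather than $L^4$ oscillatory estimate is essential to absorb the extra coordinate.
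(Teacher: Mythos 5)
Your proposal follows essentially the same route as the paper's own proof: the same Cauchy--Schwarz reduction to $I(\tau,p)$, the same dyadic sets $E_{\tau,p}(j)$ and target bound $\#E_{\tau,p}(j)\lesssim_\epsilon 2^{(2+\epsilon)(j_{\min}+j_{\med})}$, the same Fourier-positivity trick and $\eta,\eta'$ change of variables feeding H\"older with six $L^6_t$ factors and Corollary~\ref{cor:lp} at $p=6$ when $j_3=j_{\min}$, the same $2^{j_1}$-cube covering with Lemma~\ref{lem:thick} in $D=6$ and the plane-thickening volume count $|X_{2^{j_1}}|\lesssim 2^{4j_1+2j_{\med}}$ when $j_1=j_{\min}$, and symmetry for the last case. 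The only cosmetic slip is writing the covering-case bound $2^{(2+\epsilon)j_1+2j_{\med}}$ as an \emph{equality} with $2^{(2+\epsilon)(j_{\min}+j_{\med})}$; it is of course only a one-sided inequality, which is all that is needed.
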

As was the case in Proposition \ref{2D_spacetime_estimate}, the range
of regularity exponents $\alpha>1$ in Proposition
\ref{3D_spacetime_estimate} is sharp due to the following result:
\begin{proposition}
  \label{Lower_bound_3D}
  For $\kappa \in \mathbb{N}$ sufficiently large, there exists
  $\gamma^{(2)}_0: \Lambda^2 \times \Lambda^2 \rightarrow \mathbb{C}$,
  such that for $\delta>0$ sufficiently small:
  \begin{equation}
    \notag
    \|S^{(1,1)}B_{1,2}\,\mathcal{U}_Q^{(2)}(t)\,\gamma^{(2)}_0\|_{L^2([0,\delta] \times \Lambda \times \Lambda)} \gtrsim_{\,\delta} \sqrt{\ln \kappa} \cdot \|S^{(2,1)} \gamma^{(2)}_0\|_{L^2(\Lambda^2 \times \Lambda^2)}.
  \end{equation}
\end{proposition}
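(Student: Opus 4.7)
The plan is to adapt the two-dimensional strategy from the proof of Proposition \ref{Lower_bound_2D} to three dimensions, with the single change that one now concentrates the auxiliary frequencies over a two-dimensional slice (the orthogonal complement of $p$) rather than a one-dimensional slice.

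First I would verify the endpoint pointwise divergence. At $\alpha=1$ in three dimensions the multiplier is
\[
I(\tau,p)=\sum_{m,n\in\mathbb{Z}^3}\frac{\widetilde{\delta}\bigl(\tau+Q(p)-2Q(n,m)\bigr)\,\langle p\rangle^{2}}{\langle m-p\rangle^{2}\langle n-p\rangle^{2}\langle p-n-m\rangle^{2}}.
\]
Taking $p=(\kappa,0,0)$ and $\tau$ with $|\tau+Q(p)|\leq 2$, and restricting to $n=p$, the constraint forces $\theta_1^{2}\kappa\,m_1\in[-3/2,1]$, so for $\kappa$ large only $m_1=0$ contributes. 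The remaining sum collapses to
\[
I(\tau,p)\gtrsim \sum_{(m_2,m_3)\in\mathbb{Z}^2}\frac{\kappa^{2}}{(1+\kappa^{2}+m_2^{2}+m_3^{2})(1+m_2^{2}+m_3^{2})},
\]
which by a polar-coordinate comparison to $\int\!\int\frac{\kappa^2\,r\,dr\,d\theta}{(1+\kappa^2+r^2)(1+r^2)}$ is $\gtrsim \ln\kappa$. This is the 3D analogue of \eqref{I_tau_p_endpoint_2D}, and the matching logarithmic growth is exactly what one expects since one gains one extra power in the denominator and one extra integration direction.

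Next I would pass from this pointwise lower bound to a dual sequence, mimicking Remark \ref{sequence_cm2}: by duality there exists $(c_{m_2,m_3})\in\ell^{2}(\mathbb{Z}^2)$ with $c_{m_2,m_3}\geq 0$ such that
\[
\sum_{(m_2,m_3)\in\mathbb{Z}^2}\frac{\kappa}{\bigl(1+\kappa^{2}+m_2^{2}+m_3^{2}\bigr)^{1/2}\bigl(1+m_2^{2}+m_3^{2}\bigr)^{1/2}}\,c_{m_2,m_3}\;\gtrsim\;\sqrt{\ln\kappa}\,\Bigl(\sum c_{m_2,m_3}^{2}\Bigr)^{\!1/2}.
\]
Using this sequence I would then define $\gamma_0^{(2)}$ on $\Lambda^{2}\times\Lambda^{2}$ by specifying its Fourier transform, in analogy with \eqref{gamma^2_0}:
\[
\langle(\kappa,-m_2,-m_3)\rangle\,\langle(0,-m_2,-m_3)\rangle\,\widehat{\gamma_0^{(2)}}\bigl((\kappa,-m_2,-m_3),0;\,0,(0,-m_2,-m_3)\bigr)=c_{m_2,m_3},
\]
with $\widehat{\gamma_0^{(2)}}$ vanishing at all other frequencies. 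This ensures $\|S^{(2,1)}\gamma_0^{(2)}\|_{L^2(\Lambda^2\times\Lambda^2)}^2=\sum_{m_2,m_3}c_{m_2,m_3}^{2}$ and restricts the following Fourier computation to a manageable single sum.

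Finally, choosing $\zeta$ via Lemma \ref{Fourier_Analysis_Lemma} with $\widehat{\zeta}\geq 0$ and $\widehat{\zeta}\geq 1$ on $[-1,1]$, and fixing $\bar p=(\kappa,0,0)$, $\bar q=(0,0,0)$, $|\bar\tau+Q(\bar p)|\leq 1$, I would split
\[
\bigl(\zeta(t)S^{(1,1)}B_{1,2}\,\mathcal{U}_Q^{(2)}(t)\gamma_0^{(2)}\bigr)^{\!\widetilde{\,}}(\bar\tau,\bar p;\bar q)=\bigl(\zeta S^{(1,1)}B_{1,2}^{+}\mathcal{U}_Q^{(2)}\gamma_0^{(2)}\bigr)^{\!\widetilde{\,}}-\bigl(\zeta S^{(1,1)}B_{1,2}^{-}\mathcal{U}_Q^{(2)}\gamma_0^{(2)}\bigr)^{\!\widetilde{\,}}.
\]
For the $B^{+}$ term I would use the non-negativity of $\widehat\zeta$ to replace the spacetime Fourier transform by an integral of $\widehat\zeta(\bar\tau-\tau_1)$ over $|\tau_1-\bar\tau|\leq 1$, exactly as in \eqref{tau1_integral}; the integrand reduces to $\kappa\cdot\widehat{\gamma_0^{(2)}}((\kappa,-m_2,-m_3),0;0,(0,-m_2,-m_3))$ summed in $(m_2,m_3)$, and the dual-sequence inequality above gives the lower bound $\sqrt{\ln\kappa}\,\|S^{(2,1)}\gamma_0^{(2)}\|_{L^{2}}$. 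For the $B^{-}$ term the support constraints force $m=n=0$ and $\widetilde m_2=\widetilde m_3=0$, reducing the contribution to a single Fourier coefficient and thus to a uniform (in $\kappa$) upper bound. Combining these, Plancherel in $(\tau,\xi;\xi')$ together with $\mathrm{supp}\,\zeta\subset[-\delta,\delta]$ yields the claim.

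The part I expect to require the most care is the logarithmic divergence in Step 1: although the 2D integral in \eqref{I_tau_p_endpoint_2D} diverges because of a single ``$dx/x$'' tail, the 3D version must use that the two-dimensional integral $\int\!\int r\,dr\,d\theta/((1+\kappa^2+r^2)(1+r^2))$ has its logarithm arise from the transition region $1\lesssim r\lesssim\kappa$. Once this is nailed down, the subsequent duality, construction of $\gamma_0^{(2)}$, and the $B^{+}/B^{-}$ split are direct transcriptions of the two-dimensional argument; in particular, the handling of the lack of time-periodicity of $\mathcal{U}_Q^{(k)}(t)$ through integration of $\widehat\zeta$ over an interval rather than evaluation at a point works verbatim.
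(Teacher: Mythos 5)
Your proposal follows essentially the same approach as the paper's proof: the same choice $p=(\kappa,0,0)$ with $n=p$ forcing $m_1=0$, the same logarithmic lower bound on the collapsed sum via polar coordinates, the same dual sequence construction, the same single-orbit Fourier support for $\gamma_0^{(2)}$, and the same $B^{+}/B^{-}$ split combined with $\zeta$ from Lemma~\ref{Fourier_Analysis_Lemma} to handle the non-periodicity of $\mathcal{U}_Q^{(2)}(t)$. No substantive differences.
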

We will first prove Proposition \ref{3D_spacetime_estimate}.
\begin{proof}[Proof of Proposition \ref{3D_spacetime_estimate}]
  The proof will be similar to that of Proposition
  \ref{2D_spacetime_estimate}.  We will just outline the key
  differences. As in \eqref{I_tau_p2}, it suffices to obtain a uniform
  bound on:
  \begin{equation}
    \label{I_tau_p_3D}
    I(\tau,p)=\sum_{m,n \in \mathbb{Z}^3} \frac{\widetilde{\delta}\big(\tau+Q(p)-2Q(n,m)\big) \cdot \langle p \rangle^{2\alpha}}{\langle m-p \rangle^{2\alpha} \cdot \langle n-p \rangle^{2\alpha} \cdot \langle p-n-m \rangle^{2\alpha}},
  \end{equation}
  whenever $\alpha>1$. Here, the sum is over elements of
  $\mathbb{Z}^3$ and the notation has been adapted to the
  three-dimensional setting. Again, since $\alpha>1>\frac34$, the contributions of $m=0$ or $n=0$ to the sum are uniformly bounded.

  As before, given $j=(j_1,j_2,j_3) \in \mathbb{N}_0^3$, $\tau \in
  \mathbb{R}$, and $p \in \mathbb{Z}^2$, we let $E_{\tau,p}(j)
  \subseteq \mathbb{Z}^3 \setminus \{0\} \times \mathbb{Z}^3 \setminus
  \{0\}$
be the set of all pairs $(m,n)$ such that:
\begin{equation}
  \notag
  \begin{cases}
    \tau + Q(p)-2Q(n,m) \in [0,1]\\
    |m-p| \sim 2^{j_1}, |n-p| \sim 2^{j_2}, |p-n-m| \sim 2^{j_3}.
  \end{cases}
\end{equation}

The bound that we would like to prove in three dimensions is:
\begin{equation}
  \label{E_tau_p_bound_3D}
  \#E_{\tau,p}(j) \lesssim_{\epsilon} 2^{(2+\epsilon)j_{\min}+(2+\epsilon)j_{\med}}
\end{equation}
for all $\epsilon>0$. As in the proof of Proposition
\ref{2D_spacetime_estimate}, the claim follows.

Let us now prove \eqref{E_tau_p_bound_3D}. We note that the
three-dimensional analogue of \eqref{E_tau_p} holds:
\begin{equation}
  \label{E_tau_p_3D}
  \#E_{\tau,p}(j) \leq c \cdot \Big\|\mathop{\mathop{\sum_{\eta,\eta'
        \in \mathbb{Z}^3}}_{\eta \in C_{2^{j_1\vee
          j_2+1}}(0)\,\cap\,C_{2^{j_2\vee
          j_3+2}}(p)\,\cap\,C_{2^{j_1\vee j_3+2}}(-p)}}_{\eta' \in
    C_{2^{j_3}}(p)\,\cap\,C_{2^{j_1\vee  j_2+1}}(2p)} e^{\frac{1}{2}it\big(Q(\eta)-Q(\eta')\big)} \Big\|_{L^1_t(I)}
\end{equation}
for some $c>0$ and for some finite interval $I \subseteq \mathbb{R}$.
Similarly as in \eqref{E_tau_p}, in \eqref{E_tau_p_3D} $C_{2^j}(q)$
denotes a three-dimensional cube, centered at $q$, whose sides are
parallel to the coordinate axes, and who have sidelength $2^{j+1}$. As
before, we need to consider several cases, depending on the relative
sizes of $j_1, j_2, j_3$. We recall that $j_1,j_2,j_3$ are ordered as
$j_{\max} \geq j_{\med} \geq j_{\min}$.

\vspace{5mm} \textbf{Case 1:}\,\,$j_3=\min\{j_1,j_2,j_3\}$.
\vspace{5mm}

This case follows once we estimate the following analogue of the
expression in \eqref{Case1_Term1}:

\begin{equation}
  \notag
  \int_{I} \Big| \mathop{\sum_{\eta_1,\eta_2,\eta_3 \in \mathbb{Z}}}_{\eta_j \in I^{\med}_j} \mathop{\sum_{\eta'_1,\eta'_2,\eta'_3 \in \mathbb{Z}}}_{\eta'_j \in I^{\min}_j} e^{\,\frac{1}{2}it\big(\theta_1^2\eta_1^2\,+\,\theta_2^2\eta_2^2\,+\,\theta_3^2\eta_3^2\,-\,\theta_1^2(\eta'_1)^2\,-\,\theta_2^2(\eta'_2)^2\,-\,\theta_3^2(\eta'_3)^2\big)}\Big|dt.
\end{equation}
Here, $I^{\med}_1,I^{\med}_2, I^{\med}_3$ are fixed intervals of size
$\sim 2^{j_{\med}}$, and $I^{\min}_1,I^{\min}_2,I^{\min}_3$ are fixed
intervals of size $\sim 2^{j_{\min}}$.  By H\"{o}lder's inequality,
this expression is:
\begin{equation}
  \notag
  \leq \Big(\int_{I} \Big|\mathop{\sum_{\eta_1 \in \mathbb{Z}}}_{\eta_1 \in I^{\med}_1} e^{\,\frac{1}{2}it\theta_1^2 \eta_1^2}\Big|^6\,dt\Big)^{\frac{1}{6}} \cdot \Big(\int_{I}\Big|\mathop{\sum_{\eta_2 \in \mathbb{Z}}}_{\eta_2 \in I^{\med}_2} e^{\,\frac{1}{2}it\theta_2^2\eta_2^2}\Big|^6\,dt\Big)^{\frac{1}{6}} \cdot \Big(\int_{I}\Big|\mathop{\sum_{\eta_3 \in \mathbb{Z}}}_{\eta_3 \in I^{\med}_3} e^{\,\frac{1}{2}it\theta_3^2\eta_3^2}\Big|^6\,dt\Big)^{\frac{1}{6}} \cdot 
\end{equation}
\begin{equation}
  \notag
  \cdot \Big(\int_{I} \Big|\mathop{\sum_{\eta'_1 \in \mathbb{Z}}}_{\eta'_1 \in I^{\min}_1} e^{\,\frac{1}{2} it\theta_1^2(\eta'_1)^2}\Big|^6\,dt\Big)^{\frac{1}{6}} \cdot \Big(\int_{I} \Big|\mathop{\sum_{\eta'_2 \in \mathbb{Z}}}_{\eta'_2 \in I^{\min}_2} e^{\,\frac{1}{2} it\theta_2^2 (\eta'_2)^2}\Big|^6\,dt\Big)^{\frac{1}{6}} \cdot \Big(\int_{I} \Big|\mathop{\sum_{\eta'_3 \in \mathbb{Z}}}_{\eta'_3 \in I^{\min}_3} e^{\,\frac{1}{2} it\theta_3^2 (\eta'_3)^2}\Big|^6\,dt\Big)^{\frac{1}{6}}
  .
\end{equation}
For each of the six factors, we rescale in time and apply Corollary
\ref{cor:lp} with $p=6$ to deduce that this product is:
\begin{equation}
  \notag
  \lesssim_{\epsilon} \big(2^{\,(4+\epsilon)j_{\med}}\big)^{\frac{1}{6}} \cdot \big(2^{\,(4+\epsilon)j_{\med}}\big)^{\frac{1}{6}} \cdot \big(2^{\,(4+\epsilon)j_{\med}}\big)^{\frac{1}{6}} \cdot \big(2^{\,(4+\epsilon)j_{\min}}\big)^{\frac{1}{6}} \cdot \big(2^{\,(4+\epsilon)j_{\min}}\big)^{\frac{1}{6}} \cdot \big(2^{\,(4+\epsilon)j_{\min}}\big)^{\frac{1}{6}}.
\end{equation}
\begin{equation}
  \notag
  \lesssim 2^{\,(2+\epsilon)j_{\min}\,+\,(2+\epsilon)j_{\med}}
\end{equation}
for all $\epsilon>0$. This is a good bound.

\vspace{5mm} \textbf{Case 2:}\,\,$j_1=\min\{j_1,j_2,j_3\}$.
\vspace{5mm}

Given $k=(k_1,k_2,k_3) \in \mathbb{Z}^3$, we let:
\begin{equation}
  \notag
  B_k:=[2^{j_1}k_1,2^{j_1}k_1+2^{j_1}-1] \times [2^{j_1}k_2,2^{j_1}k_2+2^{j_1}-1] \times [2^{j_1}k_3,2^{j_1}k_3+2^{j_1}-1].
\end{equation}
By using the arguments from Case 1, it follows that for all $(k,k')
\in \mathbb{Z}^3 \times \mathbb{Z}^3$:
\begin{equation}
  \notag
  \#\big(E_{\tau,p}(j) \cap (B_k \times B_{k'})\big) \lesssim_{\epsilon} 2^{\,(4+\epsilon)j_1}
\end{equation}
for all $\epsilon>0$. Hence, in order to prove
\eqref{E_tau_p_bound_3D} in this case, it suffices to show that:
\begin{equation}
  \notag
  \{(k,k') \in \mathbb{Z}^3 \times \mathbb{Z}^3, \,E_{\tau,p}(j) \cap (B_k \times B'_k) \neq \emptyset\} \lesssim 2^{\,2\,\min\{j_2,j_3\}-2j_1}.
\end{equation}
By applying \cite[Lemma 3.6]{GSS} when $D=6$, we observe that:
\begin{equation}
  \notag
  \#\{(k,k') \in \mathbb{Z}^2 \times \mathbb{Z}^2,\,E_{\tau,p}(j) \cap (B_k \times B_{k'}) \neq \emptyset\} \lesssim 2^{-6j_1}\big|X_{2^{j_1}}\big|.
\end{equation}
Here, $X_{2^{j_1}}$ denotes the set of all points in $\mathbb{R}^3
\times \mathbb{R}^3$ which are of distance at most
$2^{j_1+\frac{1}{2}}$ from the set $E_{\tau,p}(j)$. Namely, when
$D=6$, the fact that $E_{\tau,p}(j) \cap (B_k \times B_{k'}) \neq
\emptyset$ implies that $(k,k')$ belongs to a $2^{j_1+\frac{1}{2}}$
thickening of $E_{\tau,p}(j)$.

Thus, we would like to show that:
\begin{equation}
  \label{X2j_1_3D}
  \big|X_{2^{j_1}}\big| \lesssim 2^{\,4j_1\,+\,2\,\min\{j_2,j_3\}}.
\end{equation}

Let $(m,n) \in E_{\tau,p}(j)$.  As in the two-dimensional problem, it
is the case that $m$ is allowed to vary over a ball of radius
$O(2^{j_1})$, and that for fixed $m$, $n$ varies over a ball of radius
$O(2^{\,\min\{j_2,j_3\}})$. The fact that $\tau+Q(p)-2Q(n,m) \in
[0,1]$ can be rewritten as:
\begin{equation}
  \notag
  \frac{\tau+Q(p)}{2 |m|}-n_1 \cdot \frac{\theta_1^2 \, m_1}{|m|}-n_2 \cdot \frac{\theta_2^2 \, m_2}{|m|}-n_3 \cdot \frac{\theta_3^2\,m_3}{|m|} \in \Big[0,\frac{1}{2|m|}\Big]
\end{equation}
Hence, $n$ lies within an $O(1)$ distance of a fixed plane in
$\mathbb{R}^3$.

Consequently, given $(x,y)$, which belongs to the thickening
$X_{2^{j_1}}$, it follows that $x$ lies in a ball of radius
$O(2^{j_1})$, and for a fixed $x$, the $y$ coordinate lies in the
intersection of a ball of radius $O(2^{\,\min\{j_2,j_3\}})$ and an
$O(2^{j_1})$ neighborhood of a fixed plane. It follows that the
Lebesgue measure of the set to which $x$ is localized is $\lesssim
2^{3j_1}$ and that, for a fixed $x$, the Lebesgue measure of the set
to which $y$ is localized is $\lesssim
2^{j_1+\,2\min\{j_2,j_3\}}$. The bound \eqref{X2j_1_3D} now follows.
 
\vspace{5mm} \textbf{Case 3:}\,\,$j_2=\min\{j_1,j_2,j_3\}$.
\vspace{5mm}

Case 3 is analogous to Case 2 due to the symmetry in $m$ and $n$ in
the definition of $I(\tau,p)$ given in \eqref{I_tau_p_3D}.
\end{proof}

Let us now prove Proposition \ref{Lower_bound_3D}. The proof will be
very similar to the proof of Proposition \ref{Lower_bound_2D}, so we
will just outline the main differences.
\begin{proof}[Proof of Proposition \ref{Lower_bound_3D}]
  If $\alpha=1$, then:
\begin{equation}
  \notag
  I(\tau,p)=\sum_{m,n \in \mathbb{Z}^3} \frac{\widetilde{\delta}\big(\tau+Q(p)-2Q(n,m)\big) \cdot \langle p \rangle^2}{\langle m-p \rangle^2 \cdot \langle n-p \rangle^2 \cdot \langle p-n-m \rangle^2}.
\end{equation}
We will show that $I(\tau,p)$ is not uniformly bounded in
$\tau,p$. Similarly as in the $2D$ setting, we let $\kappa \gg 1$ be
an integer, we let $p:=(\kappa,0,0)$, and we consider only the part of
the sum $I(\tau,p)$ in which $n=p$. Hence, we sum over all
$m=(m_1,m_2,m_3) \in \mathbb{Z}^3$ such that $\tau+Q(p)-2Q(n,m) \in
[0,1]$. As in the $2D$ setting, it follows that for $\kappa$
sufficiently large, in this sum, $m_1=0$.  Hence:

\begin{align*}
  I(\tau,p) 
\gtrsim & \sum_{m_2,m_3 \in \mathbb{Z}}
\frac{\kappa^2}{(1+\kappa^2+m_2^2+m_3^2) \cdot (1+m_2^2+m_3^2)} \\
\sim & \int_{\mathbb{R}^2} \frac{\kappa^2}{(1+\kappa^2+|x|^2) \cdot (1+|x|^2)}\,dx \gtrsim \ln \kappa.
\end{align*}
The details of the above calculation can be found in the proof of
\cite[Lemma 3.11]{GSS}.  In particular, by duality, there exists a
sequence $(d_{\,m_2,m_3}) \in \ell^2(\mathbb{Z}^2)$ such that
$d_{\,m_2,m_3} \geq 0$ for all $(m_2,m_3) \in \mathbb{Z}^2$ and:
\begin{align*}
  & \sum_{m_2,m_3 \in \mathbb{Z}}
  \frac{\kappa}{\sqrt{1+\kappa^2+m_2^2+m_3^2} \cdot
    \sqrt{1+m_2^2+m_3^2}} \cdot d_{\,m_2,m_3} \\
\gtrsim & \sum_{m_2,m_3 \in \mathbb{Z}} \sqrt{\ln \kappa} \cdot (\sum_{m_2,m_3} d_{\,m_2,m_3}^2)^{\frac{1}{2}}.
\end{align*}
We now choose a specific $\gamma^{(2)}_0: \Lambda^2 \times \Lambda^2
\rightarrow \mathbb{C}$, similarly as in \eqref{gamma^2_0}. In
particular, with $(d_{\,m_2,m_3})$ as above, we choose
$\gamma^{(2)}_0$ such that:
\begin{align*}
  &\langle (\kappa,-m_2,-m_3) \rangle \cdot \langle (0,-m_2,-m_3)
  \rangle \cdot
  (\gamma^{(2)}_0)\,\widehat{\,}\,\Big((\kappa,-m_2,-m_3),(0,0);(0,0),(0,-m_2,-m_3)\Big)\\
&=d_{\,m_2,m_3},
\end{align*}
for all $m_2,m_3 \in \mathbb{Z}$, and such that
$(\gamma^{(2)}_0)\,\widehat{\,}\,=0$ at all frequencies which are not
of the form $\Big((\kappa,-m_2,-m_3),(0,0);(0,0),(0,-m_2,-m_3)\Big)$
for some $m_3 \in \mathbb{Z}$.  Let $\zeta \in L^1(\mathbb{R})$ be
such that $\widehat{\zeta} \geq 0$ on $\mathbb{R}$ and
$\widehat{\zeta} \geq 1$ on
$[-1,1]$, cp.\ Lemma \ref{Fourier_Analysis_Lemma}.
Then, by arguing as in the proof of \eqref{Endpoint_estimate1_2D}
\begin{equation}
  \notag
  \|\zeta(t)S^{(1,1)}B_{1,2}\,\mathcal{U}_Q^{(2)}(t)\,\gamma^{(2)}_0\|_{L^2(\mathbb{R} \times \Lambda \times \Lambda)} 
  \gtrsim \sqrt{\ln \kappa} \cdot \|S^{(2,1)}\gamma^{(2)}_0\|_{L^2(\Lambda^2 \times \Lambda^2)}.
\end{equation}
The proposition now follows.
\end{proof}

\begin{remark}
  \label{higher_dimensions}
  More generally, in $d \geq 2$ dimensions, it is the case that for
  all $k \in \mathbb{N}$ and $\gamma_0^{(k+1)}:\Lambda^{k+1} \times
  \Lambda^{k+1} \rightarrow \mathbb{C}$:
  \begin{equation}
    \label{d_dimensional_spacetime_estimate_bound}
    \|S^{(k,\alpha)} B_{j,k+1} \,\mathcal{U}_Q^{(k+1)}(t)\,\gamma_0^{(k+1)}\|_{L^2([0,1] \times \Lambda^k \times \Lambda^k)} \leq C \|S^{(k+1,\alpha)}\gamma_0^{(k+1)}\|_{L^2(\Lambda^{k+1} \times \Lambda^{k+1})}.
  \end{equation}
  whenever $\alpha>\frac{d-1}{2}$. Here, $\Lambda=\mathbb{T}^d$ is the
  classical $d$-dimensional torus and $\mathcal{U}_Q^{(k)}(t)$ is the
  free evolution operator obtained from the $d$-dimensional version of
  $\Delta_Q$. The constant $C>0$ depends on $d$ and $\alpha$.

  Moreover, for $\kappa \in \mathbb{N}$ sufficiently large, there
  exists $\gamma^{(2)}_0: \Lambda^2 \times \Lambda^2 \rightarrow
  \mathbb{C}$, such that for $\delta>0$ sufficiently small:
  \begin{equation}
    \label{d_dimensional_lower_bound}
    \|S^{(1,\frac{d-1}{2})}B_{1,2}\,\mathcal{U}_Q^{(2)}(t)\,\gamma^{(2)}_0\|_{L^2([0,\delta] \times \Lambda \times \Lambda)} \gtrsim_{\,\delta} \sqrt{\ln \kappa} \cdot \|S^{(2,\frac{d-1}{2})} \gamma^{(2)}_0\|_{L^2(\Lambda^2 \times \Lambda^2)}.
  \end{equation}

  The estimate \eqref{d_dimensional_spacetime_estimate_bound} is
  proved by using the geometric arguments given in Proposition
  \ref{2D_spacetime_estimate} and Proposition
  \ref{3D_spacetime_estimate}, and applying Corollary
  \ref{cor:lp}. The estimate \eqref{d_dimensional_lower_bound} is
  proved by using the same methods as in Proposition
  \ref{Lower_bound_2D} and Proposition \ref{Lower_bound_3D}, and the
  fact that:
  \begin{equation}
    \label{ln_kappa_d_dimensions}
    \sum_{m_2,\ldots,m_d \in \mathbb{Z}} \frac{\kappa^{d-1}}{(1+\kappa^2+m_2^2+\cdots+m_d^2)^{\frac{d-1}{2}} \cdot (1+m_2^2+\cdots+m_d^2)^{\frac{d-1}{2}}} \gtrsim \ln \kappa.
  \end{equation}
  The estimate \eqref{ln_kappa_d_dimensions} follows by using polar
  coordinates to see that the left-hand side is:
  \begin{equation}
    \notag
    \sim \int_{\mathbb{R}} \frac{\kappa^{d-1} \cdot r^{d-2}}{(1+\kappa^2+r^2)^{\frac{d-1}{2}} \cdot (1+r^2)^{\frac{d-1}{2}}}\,dr \gtrsim \int_{1+r^2 \leq \kappa^2} \frac{\kappa^{d-1} \cdot r^{d-2}}{\kappa^{d-1} \cdot r^{d-1}}\,dr \gtrsim \ln \kappa.
  \end{equation}
  We will omit the details of the proofs of
  \eqref{d_dimensional_spacetime_estimate_bound} and
  \eqref{d_dimensional_lower_bound}.
\end{remark}

\subsection{A conditional uniqueness result}
\label{A conditional uniqueness result 3D}
We recall the classes $\widetilde{\mathcal{A}}$ and $\mathcal{A}$
given in Definitions \ref{mathcalAtilde} and \ref{mathcalA}
respectively. Here, we are considering $\widetilde{\mathcal{A}}$ and
$\mathcal{A}$ in three dimensions. By arguing as in the proof of
Theorem \ref{uniqueness_Lambda2_2D}, we can deduce the following
three-dimensional result:

\begin{theorem}
  \label{uniqueness_Lambda3_3D}
  Solutions to the Gross-Pitaevskii hierarchy on $\Lambda_3$ are
  unique in the class $\widetilde{\mathcal{A}}$ whenever
  $\alpha>1$. Moreover, whenever $\alpha \geq 1$, the class
  $\widetilde{\mathcal{A}}$ is non-empty and it contains the
  factorized solutions $(|\phi_t \rangle \langle \phi_t|^{\otimes
    k})_k$.
\end{theorem}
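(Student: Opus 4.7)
The plan is to follow the same two-step strategy as in the proof of Theorem \ref{uniqueness_Lambda2_2D}. First I would establish conditional uniqueness for the Gross-Pitaevskii hierarchy with modified Laplacian \eqref{GP} on the classical torus $\Lambda=\mathbb{T}^3$ in the class $\mathcal{A}(\alpha)$ with $\alpha>1$, and then transfer this to the hierarchy \eqref{GP1} on $\Lambda_3$ using the rescaling \eqref{rescaling} together with Lemma \ref{correspondence} and Lemma \ref{correspondence2} (the latter being valid for any $d\geq 2$ as noted in the excerpt).

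For uniqueness on $\Lambda$, I would feed the sharp three-dimensional spacetime estimate of Proposition \ref{3D_spacetime_estimate} into the Klainerman-Machedon boardgame combined with Duhamel iteration, exactly as in the proof of Proposition \ref{Lambda_conditional_uniqueness}. The key point, already noted there, is that the boardgame only permutes the full variables $x_j\in\Lambda$ without ever mixing their individual coordinates, so the anisotropic character of $\Delta_Q$ is transparent to it. Once two solutions in $\mathcal{A}(\alpha)$ with the same initial data are shown to coincide on $\Lambda$, the scaling correspondences immediately promote this to uniqueness in $\widetilde{\mathcal{A}}(\alpha)$ on $\Lambda_3$.

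For the non-emptiness claim at $\alpha \geq 1$, I would consider the factorized sequence $\widetilde{\gamma}^{(k)}(t):=|\phi_t\rangle\langle\phi_t|^{\otimes k}$, where $\phi_t$ solves \eqref{NLS} on $\Lambda_3$ with initial datum $\phi\in H^\alpha(\Lambda_3)$. A direct computation from \eqref{B+} gives
\begin{equation*}
B^+_{j,k+1}\bigl(|\phi_t\rangle\langle\phi_t|^{\otimes(k+1)}\bigr)(\vec{x}_k;\vec{x}'_k)=\bigl(|\phi_t|^2\phi_t\bigr)(x_j)\prod_{\ell\neq j}\phi_t(x_\ell)\prod_{\ell=1}^{k}\overline{\phi_t(x'_\ell)},
\end{equation*}
with an analogous formula for $B^-_{j,k+1}$. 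Since $S^{(k,\alpha)}$ acts diagonally on a tensor product, this yields
\begin{equation*}
\|S^{(k,\alpha)}B_{j,k+1}(\widetilde{\gamma}^{(k+1)})(t)\|_{L^2(\Lambda_3^k\times\Lambda_3^k)}\lesssim \bigl\||\phi_t|^2\phi_t\bigr\|_{H^\alpha(\Lambda_3)}\,\|\phi_t\|_{H^\alpha(\Lambda_3)}^{2k-1}.
\end{equation*}
The trilinear Strichartz estimates on general three-tori from \cite{Strunk,Killip_Visan} provide local well-posedness of \eqref{NLS} in $H^\alpha(\Lambda_3)$ for $\alpha\geq 1$ together with a bound on $\||\phi_t|^2\phi_t\|_{L^1_t H^\alpha_x}$ over a small interval determined by $\|\phi\|_{H^\alpha}$. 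Integrating the displayed estimate in time over such an interval and packaging the result as $\widetilde{f}(t)^{k+1}$ with $\widetilde{f},\widetilde{g}$ continuous and independent of $k$ (the $k$-dependence coming entirely from the $\|\phi_t\|_{H^\alpha}^{2k-1}$ factor) completes the verification.

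The main obstacle is the non-emptiness at the borderline $\alpha=1$, where $H^1(\Lambda_3)$ is not an algebra and the required nonlinear bound is genuinely nontrivial; closing it depends on the full strength of the trilinear Strichartz machinery on irrational tori from \cite{Strunk,Killip_Visan}. The uniqueness part of the theorem, by contrast, is essentially automatic from the sharp spacetime bound of Proposition \ref{3D_spacetime_estimate}, the boardgame argument (unaffected by the anisotropy of $\Delta_Q$), and the purely algebraic scaling transfer of Lemmas \ref{correspondence} and \ref{correspondence2}.
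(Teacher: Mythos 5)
Your proposal follows essentially the same two-step strategy as the paper: feed Proposition \ref{3D_spacetime_estimate} into the Klainerman--Machedon boardgame argument to get uniqueness for \eqref{GP} on $\Lambda=\mathbb{T}^3$, transfer to $\Lambda_3$ via the rescaling \eqref{rescaling} and Lemmas \ref{correspondence}--\ref{correspondence2}, and verify the a priori bound for the factorized solution at $\alpha\geq 1$ via the trilinear Strichartz estimate of \cite{Strunk} (following \cite[Section 5]{GSS}). The only cosmetic deviation is that you verify non-emptiness directly on $\Lambda_3$ rather than first on $\Lambda$ with $\Delta_Q$ and then rescaling as the paper does; since the trilinear estimate is available on the general torus and any $k$-dependent rescaling constant is absorbable into $\widetilde{f}^{\,k+1}$, this is immaterial.
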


\begin{proof}
  The first part of the theorem is proved analogously as in the
  two-dimensional setting in Theorem
  \ref{uniqueness_Lambda2_2D}. Namely, we use the rescaling
  \eqref{rescaling} in three dimensions. We then finish the argument
  as before by using the spacetime estimate given in Proposition
  \ref{3D_spacetime_estimate}.

  For the second part of the theorem, we argue analogously as in the
  proof of \cite[Theorem 1.3]{GSS}. The analysis carries over to
  general tori once we recall the trilinear estimate given in
  \cite[Proposition 4.1]{Strunk}. More precisely, it is possible to
  set $\epsilon=1$ in \cite[Proposition 4.1]{Strunk} and deduce that
  there exists a universal constant $\delta_0>0$ such that for all
  dyadic integers $N_1,N_2,N_3$ with $N_1 \geq N_2 \geq N_3 \geq 1$,
  and for any finite interval $I$, it is the case that:
  \begin{equation}
    \label{Trilinear_Estimate}
    \begin{split}
   & \|P_{N_1} e^{it\Delta_Q} f_1 \cdot P_{N_2} e^{it\Delta_Q} f_2 \cdot P_{N_3} e^{it\Delta_Q}f_3 \|_{L^2(I \times \Lambda)} \\
    \lesssim{}& N_2 N_3\,\max\{\frac{N_3}{N_1},\frac{1}{N_2}\}^{\delta_0} \cdot \|P_{N_1}f_1\|_{L^2(\Lambda)} \cdot \|P_{N_2}f_2\|_{L^2(\Lambda)} \cdot \|P_{N_3}f_3\|_{L^2(\Lambda)}.
    \end{split}
  \end{equation}
  Here, $P_N$ denotes the projection to frequencies $|\xi| \sim
  N$. The implied constant depends only on the length of the interval
  $I$. We remark that \eqref{Trilinear_Estimate} can also be obtained from the more recent results in \cite{Bourgain_Demeter1,Killip_Visan}.
  In particular, from \eqref{Trilinear_Estimate}, it is possible
  to deduce the analogue of the trilinear estimate given in \cite[Proposition
  3.5]{HTT}, and following the arguments from \cite[Section 5]{GSS},
  it follows that the factorized solution $(|\phi_{Q,t} \rangle
  \langle \phi_{Q,t}|^{\otimes k})_k$ to the Gross-Pitaevskii
  hierarchy with modified Laplacian $\Delta_Q$ on $\Lambda$ in
  regularity $\alpha \geq 1$ belongs to the class $\mathcal{A}$. We
  then apply the scaling transformation \eqref{rescaling} and Lemma
  \ref{correspondence} in order to deduce that the factorized solution
  $(|\phi_t \rangle \langle \phi_t|^{\otimes k})_k$ to the
  Gross-Pitaevskii hierarchy on $\Lambda_2$ in regularity $\alpha \geq
  1$ belongs to the class $\widetilde{\mathcal{A}}$. Here, we note
  that $\phi_{Q,t}$ solves $i \partial_t \phi_{Q,t} + \Delta_Q
  \phi_{Q,t}=b_0 |\phi_{Q,t}|^2 \phi_{Q,t}$ on $\Lambda$ and $\phi_t$
  solves $i \partial_t \phi_t + \Delta \phi_t=b_0 |\phi_t|^2 \phi_t$
  on $\Lambda_2$, and the corresponding initial data are related by
  the scaling transformation \eqref{rescaling}.
\end{proof}

\begin{remark}
  The result of Theorem \ref{uniqueness_Lambda3_3D} also holds in the
  focusing setting.
\end{remark}

\begin{remark}
  \label{higher_dimensions_uniqueness}
  The conditional uniqueness results in Theorem
  \ref{uniqueness_Lambda2_2D} and Theorem \ref{uniqueness_Lambda3_3D}
  hold in general on $\Lambda_d$ for $d \geq 2$. More precisely, by
  using \eqref{d_dimensional_spacetime_estimate_bound} from Remark
  \ref{higher_dimensions}, it follows that solutions to \eqref{GP1}
  are unique in the class
  $\widetilde{\mathcal{A}}=\widetilde{\mathcal{A}}(\alpha)$, defined
  on $\Lambda_d$ for $\alpha>\frac{d-1}{2}$.
\end{remark}

\subsection{An unconditional uniqueness result and a rigorous
  derivation of the defocusing cubic NLS on $\Lambda_3$}
\label{An unconditional uniqueness result 3D}
It is also possible to prove an unconditional uniqueness result for
the Gross-Pitaevskii hierarchy on $\Lambda_3$ in a class of density
matrices in which the allowed range of regularity exponents is $\alpha
\geq 1$. As was noted in \cite{VS2}, this type of result does not
extend the conditional uniqueness result given in Theorem
\ref{uniqueness_Lambda3_3D}. However, as we will see below, it will
allow us to obtain a rigorous derivation of the defocusing cubic
nonlinear Schr\"{o}dinger equation on $\Lambda_3$.

Given $\alpha \in \mathbb{R}$, we denote by $\mathfrak{H}^{\alpha}$
the set of all $(\gamma^{(k)})_k \in \mathop{\bigoplus}_{k}
L^2(\Lambda_3^k \times \Lambda_3^k)$ such that:
\begin{itemize}
\item[i)] $\gamma^{(k)} \in L^2_{sym}(\Lambda_3^k \times \Lambda_3^k)$
  and
  $\gamma(\vec{x}_k,\vec{x}'_k)=\overline{\gamma^{(k)}(\vec{x}'_k;\vec{x}_k)}$
  for all $(\vec{x}_k,\vec{x}'_k)$ in $\Lambda_3^k \times
  \Lambda_3^k$.
\item[ii)] $S^{(k,\alpha)}\gamma^{(k)}$ belongs to the trace class on
  $L^2(\Lambda_3^k \times \Lambda_3^k)$.
\item[iii)] There exists $M>0$, which is independent of $k$, such that
  $Tr\big(|S^{(k,\alpha)}\gamma^{(k)}|\big) \leq M^{2k}$.
\end{itemize}

The unconditional uniqueness result that we prove is the following:
\begin{theorem}
  \label{unconditional_uniqueness_3D}
  Let $T>0$ be fixed. If $(\gamma^{(k)}(t))_k \in L^{\infty}_{t \in
    [0,T]} \mathfrak{H}^1$ is a mild solution to the Gross-Pitaevskii
  hierarchy on $\Lambda_3$, for which there exist $\Gamma_{N,t} \in
  L^2_{sym}(\Lambda_3^N \times \Lambda_3^N)$, which are non-negative
  as operators and whose trace is equal to $1$ such that:
  \begin{equation}
    \notag
    Tr_{k+1,\ldots,N} \Gamma_{N,t} \rightharpoonup^{*} \gamma^{(k)}(t)
  \end{equation}
  as $N$ tends to infinity in the weak-$*$ topology of the trace class
  on $L^2_{sym}(\Lambda_3^k)$. Then, the solution
  $(\gamma^{(k)}(t))_k$ is uniquely determined by the initial data
  $(\gamma_0^{(k)})_k$.
\end{theorem}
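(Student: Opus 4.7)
The plan is to follow the Quantum de Finetti strategy developed in \cite{VS2} for $\mathbb{T}^3$, replacing the classical Strichartz machinery with the multilinear estimates available on general rectangular tori from \cite{Strunk,Bourgain_Demeter1,Killip_Visan}. First, I would invoke the Weak Quantum de Finetti Theorem of \cite{AmmariNier1,AmmariNier2,LewinNamRougerie}. Since each $\gamma^{(k)}(t)$ arises as a weak-$*$ trace-class limit of partial traces of the non-negative unit-trace $\Gamma_{N,t}$, the theorem produces, for every $t \in [0,T]$, a Borel probability measure $\mu_t$ on the unit ball of $L^2(\Lambda_3)$ with
\[
\gamma^{(k)}(t) \,=\, \int |\phi\rangle\langle\phi|^{\otimes k}\, d\mu_t(\phi), \qquad k \in \mathbb{N}.
\]
The a priori bound $\Gamma(t) \in L^\infty_{t\in[0,T]}\mathfrak{H}^1$, combined with a Fatou/weak-$*$ lower semicontinuity argument, forces $\mu_t$ to concentrate on $H^1(\Lambda_3)$ and to carry a uniform-in-$t$ bound on the first moment of $\|\phi\|_{H^1}^2$.

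Second, I would reduce the hierarchy to a measure-valued evolution. Given two solutions $\Gamma_1(t), \Gamma_2(t)$ with the same initial data, both admit de Finetti representations by measures $\mu_{1,t},\mu_{2,t}$; writing the difference through iterated Duhamel expansions and applying the Klainerman--Machedon boardgame combinatorial reorganization reduces the analysis to a sum over collision operator strings acting on fibers of the form $|\phi\rangle\langle\phi|^{\otimes(k+n)}$. On such factorized states the action of $B_{j,k+1}$ reduces to multiplication by $|\phi|^2$-type quantities, and the matter is brought into the realm of cubic multilinear spacetime bounds.

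Third, I would run the boardgame iteration at regularity $\alpha=1$. Each factor of $B_{j,k+1}$ applied to a factorized density matrix produces a cubic expression whose relevant spacetime norm, after Littlewood--Paley decomposition, is estimated via the trilinear inequality \eqref{Trilinear_Estimate}; the crucial feature is the gain $\max\{N_3/N_1, 1/N_2\}^{\delta_0}$, which provides the off-diagonal decay needed to sum dyadic pieces and close the iteration. Combining this pointwise-in-$\mu$ bound with the combinatorial factor from the boardgame and the absence of combinatorial explosion characteristic of the Klainerman--Machedon scheme yields, for $T$ sufficiently small depending only on the $\mathfrak{H}^1$ bound, that $\Gamma_1(t)=\Gamma_2(t)$; iteration in time then covers the full interval $[0,T]$.

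The main obstacle is precisely that the conditional uniqueness result Theorem \ref{uniqueness_Lambda3_3D} requires $\alpha>1$, and Proposition \ref{Lower_bound_3D} shows this cannot be pushed to the endpoint $\alpha=1$ at the level of general density matrices. The de Finetti representation is what bypasses this obstruction: rather than estimating $B_{j,k+1}\gamma^{(k+1)}$ in $L^2$, one estimates it fiber-by-fiber on the factorized states $|\phi\rangle\langle\phi|^{\otimes(k+1)}$, where the sharper cubic trilinear estimate \eqref{Trilinear_Estimate}, valid on general rectangular tori at $\alpha=1$, replaces the density-matrix spacetime bound. The rigorous derivation Theorem \ref{thm:main5} then follows by combining this uniqueness result with the standard BBGKY compactness/a priori analysis, which transfers directly from the $\mathbb{T}^3$ treatment since it relies only on Sobolev embedding rather than number-theoretic properties of the lattice.
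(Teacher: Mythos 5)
Your outline follows essentially the same route as the paper: invoke the Weak Quantum de Finetti Theorem to represent $\gamma^{(k)}(t)$ as a superposition of factorized states, then replace the density-matrix spacetime bound (which fails at $\alpha=1$ by Proposition \ref{Lower_bound_3D}) with the fiber-by-fiber trilinear estimate \eqref{Trilinear_Estimate} from \cite{Strunk} when running the Duhamel/boardgame iteration, exactly as the paper does by transferring the $\mathbb{T}^3$ arguments of \cite{VS2,ChHaPavSei} to $\Lambda_3$. The paper's own proof is only a sketch referring to those sources, and your description of which ingredients change and why de Finetti bypasses the $\alpha>1$ obstruction is the intended argument.
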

For the precise terminology and notation, we refer the reader to
\cite{ChHaPavSei} and \cite{VS2}.

In order to prove Theorem \ref{unconditional_uniqueness_3D}, one
argues analogously as in the proof of \cite[Theorem 4.6]{VS2}. In
other words, one applies the Weak Quantum de Finetti Theorem as in
\cite{ChHaPavSei}. The only difference from \cite{VS2} in the case of
the irrational torus $\Lambda_3$ is the fact that one has to use a
rescaled version of \eqref{Trilinear_Estimate} on $\Lambda_3$.
Such a trilinear estimate allows us to prove the analogue on
$\Lambda_3$ of \cite[Proposition 3.1]{VS2} and hence the analogues of
\cite[inequalities (41) and (42)] {VS2}, which were crucial in the
derivation analysis on $\mathbb{T}^3$. The further details of the proof of Theorem
\ref{unconditional_uniqueness_3D} are then the same as in the
setting of the classical torus \cite[Section 4]{VS2}. For a more
detailed discussion of this approach in the context of $\mathbb{R}^3$,
we refer the reader to \cite[Sections 4-8]{ChHaPavSei}.
By arguing as in \cite{VS2}, we can deduce the following derivation
result:
\begin{theorem}
  \label{NLS_Lambda3}
  Let $V:\mathbb{R}^3 \rightarrow \mathbb{R}$ be a non-negative,
  smooth, compactly supported function with $\int_{\mathbb{R}^3}
  V(x)\,dx=b_0>0$, and let $\beta \in (0,\frac{3}{5})$ be
  given. Suppose that $(\psi_N)_N \in \mathop{\bigoplus}_{N}
  L^2(\Lambda_3^N)$ satisfies the assumption of bounded energy per
  particle \eqref{Bounded energy per particle} and that of asymptotic
  factorization \eqref{Asymptotic factorization}.  Then, there exists
  a sequence $N_j \rightarrow \infty$ such that for all $t \in
  \mathbb{R}$ and $k \in \mathbb{N}$:
  \begin{equation}
    \notag
    Tr\big|\gamma^{(k)}_{N_j,t}-|\phi_t \rangle \langle \phi_t\big|^{\otimes k}\big| \rightarrow 0\,\,\,\mbox{as}\,\,j \rightarrow \infty,
  \end{equation}
  where $\phi_t$ solves the defocusing cubic nonlinear Schr\"{o}dinger
  equation on $\Lambda_3$ with initial data $\phi$:
  \begin{equation}
    \notag
    \begin{cases}
      i \partial_t \phi_t + \Delta \phi_t=b_0 |\phi_t|^2 \phi_t\\
      \phi_t \big|_{t=0}=\phi.
    \end{cases}
  \end{equation}
\end{theorem}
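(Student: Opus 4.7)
My plan is to combine the unconditional uniqueness result of Theorem \ref{unconditional_uniqueness_3D} with the standard two-step derivation strategy carried out in \cite{VS2} on $\mathbb{T}^3$, transferring all estimates from $\mathbb{T}^3$ to $\Lambda_3$ by means of the rescaling \eqref{rescaling}. First, starting from the sequence $(\gamma_{N,t}^{(k)})_k$ defined in \eqref{gammakNt}, I would use the bounded-energy-per-particle assumption \eqref{Bounded energy per particle} together with standard $N$-body operator inequalities to produce, via a diagonal argument, a subsequence $N_j \to \infty$ and a limit $(\gamma^{(k)}(t))_k \in L^\infty_{t \in [0,T]} \mathfrak{H}^1$ such that each $\gamma^{(k)}(t)$ is the weak-$*$ limit in the trace class on $L^2_{sym}(\Lambda_3^k \times \Lambda_3^k)$ of $Tr_{k+1,\ldots,N_j}\,|\Psi_{N_j,t}\rangle\langle \Psi_{N_j,t}|$. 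Since $\Gamma_{N_j,t} := |\Psi_{N_j,t}\rangle\langle \Psi_{N_j,t}|$ is non-negative with trace one, the structural hypothesis of Theorem \ref{unconditional_uniqueness_3D} is automatically satisfied by the limit.

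Next, I would pass to the limit in the BBGKY hierarchy on $\Lambda_3$ to show that $(\gamma^{(k)}(t))_k$ is a mild solution of \eqref{GP1}. The two items to verify are that the $1/N$-truncation error vanishes and that the scaled potential $V_N(x_j - x_{k+1})$, acting through the partial trace against the $(k+1)$-particle density matrix, converges to $b_0\,\delta(x_j - x_{k+1})$ in the appropriate weak sense. The restriction $\beta \in (0,3/5)$ enters precisely here, balancing the concentration rate of $V_N$ against the available a priori regularity, exactly as in the corresponding step of \cite{VS2}. The key quantitative input is a uniform $H^1$-type a priori bound of the form $Tr\,|S^{(k,1)}\gamma_{N,t}^{(k)}| \lesssim M^{2k}$ coming from conservation of the many-body energy, together with the rescaled version on $\Lambda_3$ of the trilinear estimate \eqref{Trilinear_Estimate}, which replaces the standard Strichartz input used on $\mathbb{T}^3$ in \cite[Proposition 3.1]{VS2} and yields analogues of \cite[inequalities (41)-(42)]{VS2}.

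With both steps in place, Theorem \ref{unconditional_uniqueness_3D} forces the limit $(\gamma^{(k)}(t))_k$ to be uniquely determined by $(\gamma^{(k)}(0))_k$. The asymptotic factorization assumption \eqref{Asymptotic factorization}, combined with the usual elementary propagation-of-chaos argument at the initial time, identifies $\gamma^{(k)}(0) = |\phi\rangle\langle\phi|^{\otimes k}$ for every $k$. Since the factorized family $(|\phi_t\rangle\langle\phi_t|^{\otimes k})_k$ arising from the NLS flow \eqref{NLS} is itself a mild solution of \eqref{GP1} in $L^\infty_t \mathfrak{H}^1$ and trivially satisfies the required weak-$*$ structural condition, uniqueness yields $\gamma^{(k)}(t) = |\phi_t\rangle\langle\phi_t|^{\otimes k}$ for all $k$ and $t \in [0,T]$. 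The final upgrade from weak-$*$ convergence to trace-norm convergence is standard for rank-one-projection limits with matching traces, and is carried out in detail in \cite[Section 4]{VS2}; the extension to arbitrary $t \in \mathbb{R}$ follows from the time-reversal symmetry of the Schr\"odinger dynamics.

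The main obstacle, as already noted in the discussion preceding the statement, will be to transfer the trilinear estimate \eqref{Trilinear_Estimate} to $\Lambda_3$ in such a way that the resulting constants in the analogues of \cite[Proposition 3.1]{VS2} and \cite[inequalities (41)-(42)]{VS2} do not degenerate in $k$ or in the aspect ratios $\theta_1,\theta_2,\theta_3$. Once this irrational-torus-adapted bookkeeping is in hand, no Diophantine input is required, and the Weak Quantum de Finetti machinery of \cite{ChHaPavSei} and \cite{VS2} applies essentially verbatim to conclude the proof.
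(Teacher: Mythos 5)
Your proposal follows essentially the same route as the paper: extract a subsequential limit in $L^\infty_t\,\mathfrak{H}^1$ of the BBGKY marginals via the $H^1$ a priori bound (the \cite{EESY}-type limiting step, which is Diophantine-free), verify that the limit is a mild solution of the GP hierarchy arising as a weak-$*$ limit of non-negative trace-one $N$-body states, and then invoke the de Finetti-based unconditional uniqueness result of Theorem~\ref{unconditional_uniqueness_3D} together with the trilinear estimate \eqref{Trilinear_Estimate} transferred to $\Lambda_3$ via the rescaling \eqref{rescaling}, exactly as the paper describes by referring to \cite{EESY} and \cite[Section 5]{VS2}. One minor remark on your final paragraph: the concern about constants degenerating in $k$ is not actually an issue for this step, since \eqref{Trilinear_Estimate} is stated for individual functions (not density matrices of order $k$), so the $k$-dependent-constant phenomenon of Remark~\ref{2D_spacetime_estimate_bound_Lambda2_remark} does not arise in the de Finetti-based analysis; the $\theta_j$-dependence is harmless for fixed aspect ratios, so the substantive content of that step is simply citing the available estimate from \cite{Strunk} (or \cite{Bourgain_Demeter1,Killip_Visan}).
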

Let us note that the limiting arguments in \cite{EESY} carry over to
the setting of $\Lambda_3$. Namely, these arguments rely on Sobolev
embedding results and do not use any Diophantine properties. We can
then combine the analogue on $\Lambda_3$ of the result of \cite{EESY}
with the unconditional uniqueness result from Theorem
\ref{unconditional_uniqueness_3D}, and argue as in \cite[Section
5]{VS2} in order to deduce Theorem \ref{NLS_Lambda3}. We will omit the
details of the proof, and we will refer the interested reader to
\cite{VS2} for a more detailed discussion in the case of the classical
torus.

\begin{remark}
  It is possible to redo the analysis of this subsection in the
  two-dimensional setting if, instead of \cite[Proposition
  4.1]{Strunk}, we use \cite[Proposition 3.3]{Strunk}. In this way, we
  can also obtain a derivation of the defocusing cubic NLS on
  $\Lambda_3$, but by using an unconditional uniqueness result. We
  will not pursue this approach here. We recall that we have already
  obtained a derivation of the defocusing cubic NLS on $\Lambda_2$ in
  Theorem \ref{NLS_Lambda2} by using the conditional uniqueness result
  given in Theorem \ref{uniqueness_Lambda2_2D}.
\end{remark}

\section{A consequence of the spacetime estimates; local-in-time
  solutions to the Gross-Pitaevskii hierarchy on general tori}
\label{Local-in-time solutions}
In this section, let us recall that the spacetime estimates given in
Theorems \ref{2D_spacetime_estimate} and \ref{3D_spacetime_estimate},
as well as in \eqref{d_dimensional_spacetime_estimate_bound} of Remark
\ref{higher_dimensions} allow us to construct local-in-time solutions
to the Gross-Pitaevskii hierarchy with modified Laplacian $\Delta_Q$
on $\Lambda=\mathbb{T}^d$ for general $d \geq 2$. This is a direct
consequence of the truncation method from the work of T.~Chen and
Pavlovi\'{c} \cite{CP4}, which relies on the combinatorial boardgame
argument and on the spacetime estimate. Given $\alpha,\xi>0$, we
recall the definition of the space
$\mathcal{H}^{\alpha}_{\xi}(\mathbb{T}^d) \subseteq
\mathop{\bigoplus}_{k} L^2(\mathbb{T}^d \times \mathbb{T}^d)$, first
introduced in \cite{CP1}:
\begin{equation}
  \label{H_alpha_xi}
  \big\|(\gamma_0^{(k)})_k\big\|_{\mathcal{H}^{\alpha}_{\xi}(\mathbb{T}^d)}:=\sum_{k=1}^{\infty} \xi^k \cdot \|\gamma_0^{(k)}\|_{H^{\alpha}\big((\mathbb{T}^{d})^k \times (\mathbb{T}^{d})^k\big)}.
\end{equation}
In particular, we can deduce that the following result holds:

\begin{proposition}
  \label{local_existence_GP}
  Let $d \geq 2$ be given and we consider $\Lambda=\mathbb{T}^d$. Let
  us fix $\alpha>\frac{d-1}{2}$, and let
  $\alpha_0>\alpha$. Furthermore, let $\xi, \xi'>0$.  Suppose that
  $(\gamma_0^{(k)})_k \in
  \mathcal{H}^{\alpha_0}_{\xi'}(\Lambda)$. Then, if $\frac{\xi}{\xi'}$
  is sufficiently small depending on $d,\alpha,\alpha_0, \theta_1,
  \ldots, \theta_d$, there exists $T>0$ depending on
  $d,\alpha,\alpha_0,\xi,\xi', \theta_1, \ldots, \theta_d$ and
  $(\gamma^{(k)})_k=(\gamma^{(k)}(t))_k \in L^{\infty}_{[0,T]}
  \mathcal{H}^{\alpha}_{\xi}(\Lambda)$, such that for all $k \in
  \mathbb{N}$:
  \begin{equation}
    \label{local_solution_GP}
    \big\|\gamma^{(k)}(t)-\mathcal{U}_Q^{(k)}(t) \gamma_0^{(k)}+ib_0 \int_{0}^{t} \,\mathcal{U}_Q^{(k)}(t) B^{(k+1)} \gamma^{(k+1)}(s)\,ds\big\|_{L^{\infty}_{[0,T]} \mathcal{H}^{\alpha}_{\xi}(\Lambda)}=0.
  \end{equation}
\end{proposition}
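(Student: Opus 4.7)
The plan is to follow the truncation method of T.~Chen and Pavlović \cite{CP4}, which reduces the infinite coupled system to a limit of finite truncated hierarchies that can be controlled by combining the Klainerman--Machedon boardgame reorganization with the spacetime estimate \eqref{d_dimensional_spacetime_estimate_bound} of Remark~\ref{higher_dimensions}. Nothing in that machinery is sensitive to the rationality of the torus: the boardgame argument only permutes entire particle variables $x_j$ without mixing their components, and hence commutes with the modified Laplacian $\Delta_Q$ (the same observation already used in the proof of Proposition~\ref{Lambda_conditional_uniqueness}), while the only nontrivial spacetime bound needed is precisely the one satisfied by $\mathcal{U}_Q^{(k)}(t)$. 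Consequently, the scheme of \cite{CP4} ports over once we replace $e^{it\Delta}$ by $\mathcal{U}_Q^{(k)}(t)$ and invoke \eqref{d_dimensional_spacetime_estimate_bound} in place of the classical spacetime estimate.

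Concretely, for each $K \in \mathbb{N}$ I would truncate \eqref{GP} by setting $\gamma_K^{(k)} \equiv 0$ for $k > K$, obtaining a finite closed system for $(\gamma_K^{(k)})_{k \leq K}$ that admits a local-in-time mild solution by standard Picard iteration in $\mathop{\bigoplus}_{k \leq K} H^{\alpha_0}(\Lambda^k \times \Lambda^k)$. To get bounds uniform in $K$ in $L^\infty_{[0,T]}\mathcal{H}^\alpha_\xi(\Lambda)$, I would iterate the Duhamel identity $n$ times, so that $\gamma_K^{(k)}(t)$ becomes a sum of at most $k(k+1)\cdots(k+n-1)$ nested integrals involving chains of collision operators applied to the free evolution of $\gamma_0^{(k+n)}$, plus a remainder involving $\gamma_K^{(k+n)}$ itself. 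Each nested term is estimated by Minkowski on $[0,T]$, the $L^2$-unitarity of $\mathcal{U}_Q^{(k)}(t)$, a Cauchy--Schwarz step producing a $T^{1/2}$ factor, and iterated application of \eqref{d_dimensional_spacetime_estimate_bound}, yielding a bound of size $(CT^{1/2})^n$ per term (independent of $k$, because $C$ in the spacetime estimate is $k$-independent).

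The boardgame argument then collapses the factorial count to at most $C^{k+n}$ distinct equivalence classes, and absorbing these constants into the geometric weight $\xi^k$ uses exactly the smallness of $\xi/\xi'$. Choosing $T$ so that $CT^{1/2} < 1$ and $\xi/\xi'$ correspondingly small, one obtains
\begin{equation*}
\|\Gamma_K\|_{L^\infty_{[0,T]}\mathcal{H}^\alpha_\xi(\Lambda)} \leq C' \,\|(\gamma_0^{(k)})_k\|_{\mathcal{H}^{\alpha_0}_{\xi'}(\Lambda)}
\end{equation*}
uniformly in $K$, with the residual tail in the Duhamel expansion tending to zero as $n \to \infty$. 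A diagonal/weak-$*$ compactness argument then extracts a subsequence converging to a limit $\Gamma(t) = (\gamma^{(k)}(t))_k \in L^\infty_{[0,T]}\mathcal{H}^\alpha_\xi(\Lambda)$, and passing to the limit in the Duhamel identity at each fixed level $k$ gives \eqref{local_solution_GP}.

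The main obstacle is the combinatorial bookkeeping: naive iteration of Duhamel produces $\sim k!$ terms at high order, which is fatal against the geometric weight $\xi^k$, and the whole existence argument rests on cutting this down to $C^{k+n}$ via the boardgame. The only point that genuinely requires checking for the modified Laplacian is that the boardgame reorganization remains valid, but this follows from the fact that $\Delta_Q$ acts diagonally in the Fourier variable associated to each particle $x_j$, so relabeling whole particle variables is a symmetry of the free evolution; everything else is a direct transcription of the arguments in \cite{CP4}, with $\mathcal{U}_Q^{(k)}(t)$ replacing the classical free evolution and Remark~\ref{higher_dimensions} supplying the single nontrivial analytic input.
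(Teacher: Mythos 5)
Your proposal correctly identifies the paper's approach: the paper gives no detailed proof of Proposition \ref{local_existence_GP} but simply states that the result is a direct consequence of the Chen--Pavlovi\'{c} truncation scheme from \cite{CP4}, which rests on the Klainerman--Machedon boardgame combinatorics together with the spacetime estimate \eqref{d_dimensional_spacetime_estimate_bound}, and that is exactly the scheme you lay out. Your observation that the boardgame reorganization permutes only whole particle variables $x_j$ (hence commutes with $\Delta_Q$, which acts diagonally on each particle's Fourier variable) is precisely the remark the paper makes in the proof of Proposition \ref{Lambda_conditional_uniqueness}, so you have also supplied the one point that genuinely needs checking when replacing $e^{it\Delta}$ by $\mathcal{U}_Q^{(k)}(t)$.
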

We interpret \eqref{local_solution_GP} as $(\gamma^{(k)})_k$ being a
local-in-time solution of \eqref{GP}.

Furthermore, we can modify the definition given in \eqref{H_alpha_xi}
and define $\mathcal{H}^{\alpha}_{\xi}(\Lambda_d)$ on the general
$d$-dimensional torus $\Lambda_d$. By using the scaling
\eqref{rescaling} and Lemma \ref{correspondence}, we can deduce from
Proposition \ref{local_existence_GP} the following:

\begin{corollary}
  \label{local_existence_GP1}
  Let $\alpha,\alpha_0,\xi,\xi'$ be as in Proposition
  \ref{local_existence_GP} and let $(\widetilde{\gamma}_0^{(k)})_k \in
  \mathcal{H}^{\alpha_0}_{\xi}(\Lambda_d)$ be given. Then, for the $T$
  as in Proposition \ref{local_existence_GP}, there exists
  $(\widetilde{\gamma}^{(k)})_k=(\widetilde{\gamma}^{(k)}(t))_k \in
  L^{\infty}_{[0,T]} \mathcal{H}^{\alpha}_{\xi}(\Lambda_d)$, such that
  for all $k \in \mathbb{N}$:
  \begin{equation}
    \label{local_solution_GP1}
    \big\|\widetilde{\gamma}^{(k)}(t)-\mathcal{U}^{(k)}(t) \widetilde{\gamma}_0^{(k)}+ib_0 \int_{0}^{t} \,\mathcal{U}^{(k)}(t) B^{(k+1)} \widetilde{\gamma}^{(k+1)}(s)\,ds\big\|_{L^{\infty}_{[0,T]} \mathcal{H}^{\alpha}_{\xi}(\Lambda_d)}=0.
  \end{equation}
\end{corollary}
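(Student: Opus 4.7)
The plan is to transfer the local existence result on $\Lambda = \mathbb{T}^d$ from Proposition \ref{local_existence_GP} to the general torus $\Lambda_d$ by means of the bijective rescaling \eqref{rescaling} and the correspondence of solutions established in Lemma \ref{correspondence}. Concretely, given $(\widetilde{\gamma}_0^{(k)})_k \in \mathcal{H}^{\alpha_0}_{\xi}(\Lambda_d)$, I first use the inverse of \eqref{rescaling} to produce an initial datum $(\gamma_0^{(k)})_k$ on $\Lambda$. Using the Fourier-side identity \eqref{Fourier_transform_gamma_tilde} together with \eqref{rescaling_frequency1}--\eqref{rescaling_frequency2}, the $H^\alpha$ norms on the two tori are equivalent up to a constant depending only on $d,\alpha,\theta_1,\ldots,\theta_d$ for each fixed $k$, and this equivalence extends to the full $\mathcal{H}^{\alpha_0}_{\xi'}$-norm (after adjusting $\xi'$ by a $\theta$-dependent factor and absorbing the $k$-independent constants). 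Thus $(\gamma_0^{(k)})_k \in \mathcal{H}^{\alpha_0}_{\xi'}(\Lambda)$ for a suitable $\xi'$.

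Next, I invoke Proposition \ref{local_existence_GP} to obtain $T>0$ and a solution $(\gamma^{(k)}(t))_k \in L^\infty_{[0,T]}\mathcal{H}^\alpha_{\xi}(\Lambda)$ to the modified hierarchy \eqref{GP} satisfying the Duhamel identity \eqref{local_solution_GP}. I then define $(\widetilde{\gamma}^{(k)}(t))_k$ pointwise in $t$ by applying \eqref{rescaling} to each $\gamma^{(k)}(t)$. By Lemma \ref{correspondence} and the identities \eqref{Delta_Delta_Q1} and \eqref{Bjk_gamma_gamma_tilde}, the family $(\widetilde{\gamma}^{(k)}(t))_k$ is a mild solution of \eqref{GP1}, and rewriting the Duhamel formula \eqref{local_solution_GP} under the substitutions $\Delta_Q \mapsto \Delta$ (via \eqref{Delta_Delta_Q1}) and $B_{j,k+1}(\gamma^{(k+1)}) \mapsto B_{j,k+1}(\widetilde{\gamma}^{(k+1)})$ (via \eqref{Bjk_gamma_gamma_tilde}) yields exactly \eqref{local_solution_GP1}.

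Finally, to control $(\widetilde{\gamma}^{(k)}(t))_k$ in $L^\infty_{[0,T]}\mathcal{H}^\alpha_\xi(\Lambda_d)$, I again appeal to the norm equivalence between $\Lambda$ and $\Lambda_d$ at regularity $\alpha$, which is $k$-uniform up to the same $\theta$-dependent factor per $k$. This factor is absorbed by a further adjustment of $\xi$ (and of the smallness ratio $\xi/\xi'$), so the hypotheses of Proposition \ref{local_existence_GP} continue to be met, and the resulting time $T$ remains the one produced there, now additionally depending on $\theta_1,\ldots,\theta_d$ through the rescaling constants.

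The only delicate point is bookkeeping the scaling constants in the norm equivalence so that they do not force a $k$-dependent blow-up when summed against the geometric weight $\xi^k$ in \eqref{H_alpha_xi}; this is handled by observing that the Fourier multiplier implementing the rescaling contributes only a single $\theta$-dependent constant to the $H^\alpha$ norm at each level $k$, so that its $k$-th power can be absorbed by replacing $\xi$ with a comparable $\xi'$. Once this is noted, all other steps are direct consequences of Lemma \ref{correspondence}, Proposition \ref{local_existence_GP}, and the transformation rules \eqref{Delta_Delta_Q1} and \eqref{Bjk_gamma_gamma_tilde}.
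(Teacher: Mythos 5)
Your proof follows essentially the same route as the paper: the paper deduces the corollary from Proposition \ref{local_existence_GP} precisely by invoking the rescaling \eqref{rescaling} and Lemma \ref{correspondence}, transferring the Duhamel identity through \eqref{Delta_Delta_Q1} and \eqref{Bjk_gamma_gamma_tilde}. Your additional observation that the rescaling contributes a $k$-th power of a $\theta$-dependent constant to the $H^\alpha$-norm at level $k$, which must be absorbed by adjusting the geometric weight $\xi^k$ in \eqref{H_alpha_xi}, is exactly the bookkeeping the paper leaves implicit and is stated correctly.
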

We interpret \eqref{local_solution_GP1} as
$(\widetilde{\gamma}^{(k)})_k$ being a local-in-time solution of
\eqref{GP1}.

\appendix
\section{Auxiliary results}
Let us recall the following result from Fourier analysis:
\begin{lemma}
  \label{Fourier_Analysis_Lemma}
  For $\delta>0$ sufficiently small, there exists $\zeta \in
  C_0^{\infty}(\mathbb{R})$, such that
  \begin{itemize}
  \item[i)] $\supp \,\zeta \subseteq [-\delta,\delta]$
  \item[ii)] $\widehat{\zeta} \geq 0$ on $\mathbb{R}$
  \item[iii)] $\widehat{\zeta} \geq 1$ on $[-1,1]$.
  \end{itemize}
\end{lemma}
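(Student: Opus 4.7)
The plan is to build $\zeta$ as a rescaled autoconvolution of a nonnegative even bump function, which is the standard trick for simultaneously enforcing compact support in space and pointwise nonnegativity (and even positivity on a bounded set) of the Fourier transform.

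First, I would fix a real-valued, even, nonnegative $\eta \in C_0^{\infty}(\mathbb{R})$ with $\supp \eta \subseteq [-\delta/2,\delta/2]$ and $\int \eta \,dx > 0$; such $\eta$ exists for every $\delta>0$ (take a standard mollifier rescaled to this support). Since $\eta$ is real and even, $\widehat{\eta}$ is real and even. I would then set
\[
\zeta \,:=\, c\,(\eta * \eta)
\]
for a positive constant $c$ to be chosen below. Because $\supp(\eta*\eta) \subseteq \supp\eta+\supp\eta \subseteq [-\delta,\delta]$, condition i) holds, and $\eta*\eta \in C_0^{\infty}(\mathbb{R})$. The convolution identity on the Fourier side gives $\widehat{\zeta} = c\,(\widehat{\eta})^{2} \geq 0$, which takes care of ii).

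For iii), the key observation is that since $\eta \geq 0$ and $\eta$ is even,
\[
\widehat{\eta}(\xi) \,=\, \int_{-\delta/2}^{\delta/2} \eta(x)\,\cos(x\xi)\,dx,
\]
and for $|\xi|\leq 1$ and $|x|\leq \delta/2$ we have $|x\xi|\leq \delta/2$. Provided $\delta$ is small enough (say $\delta\leq \pi/2$), $\cos(x\xi) \geq \cos(\delta/2) > 0$, so
\[
\widehat{\eta}(\xi) \,\geq\, \cos(\delta/2) \cdot \int \eta\,dx \quad \text{for all } \xi \in [-1,1].
\]
Choosing $c := \bigl(\cos(\delta/2)\,\int\eta\,dx\bigr)^{-2}$ yields $\widehat{\zeta}(\xi) \geq 1$ on $[-1,1]$, which is condition iii).

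There is no real obstacle here; the only care one must take is to ensure that the lower bound on $\widehat{\eta}$ over $[-1,1]$ does not degenerate. This is why I require $\eta\geq 0$ (so that the cosine integrand never changes sign against $\eta$) and $\delta$ small enough that $\cos(\delta/2)$ is bounded away from $0$, after which the positivity of $\widehat{\zeta}$ on all of $\mathbb{R}$ follows automatically from the squared structure $\widehat{\zeta}=c(\widehat{\eta})^{2}$.
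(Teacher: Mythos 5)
Your proof is correct, and it takes a genuinely different and somewhat more direct route than the paper's. The paper's proof starts from the explicit indicator $\phi_1=\frac{1}{2}\chi_{[-2\pi,2\pi]}$, computes its Fourier transform (a scaled sinc) to get $\widehat{\phi}_1\geq 2$ on a fixed interval $[-C,C]$, replaces $\phi_1$ by a smooth compactly supported $\phi_2$ within $L^1$-distance $1$ so that $\widehat{\phi}_2\geq 1$ on $[-C,C]$, then forms the autocorrelation $\phi_3:=\phi_2*\mathcal{F}^{-1}\big(\overline{\widehat{\phi}_2}\big)$ to get $\widehat{\phi}_3=|\widehat{\phi}_2|^2\geq 0$, and finally rescales to pin down the support and dilate $[-C,C]$ to cover $[-1,1]$. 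You instead start directly from a real, even, nonnegative bump $\eta$ supported in $[-\delta/2,\delta/2]$, autoconvolve once to square on the Fourier side, and obtain the lower bound on $[-1,1]$ from the observation that $\cos(x\xi)$ stays bounded away from zero on the relevant range and never changes sign against the nonnegative weight $\eta$. Both proofs hinge on the same core device --- producing $\widehat{\zeta}\geq 0$ by (auto)convolution --- but your construction avoids the explicit sinc computation and the intermediate $L^1$-approximation step, is entirely self-contained, and makes the dependence of the multiplicative constant on $\delta$ transparent. The paper realizes the smallness-of-$\delta$ restriction through the rescaling parameter $m$ and the constraint $C/(m\delta)\geq 1$, whereas you realize it by requiring $\cos(\delta/2)$ to stay bounded away from zero; both are legitimate.
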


\begin{proof}
  Let $\phi_1:=\frac{1}{2} \chi_{[-2\pi,2\pi]}$, where
  $\chi_{[-2\pi,2\pi]}$ denotes the characteristic function of the
  interval $[-2\pi,2\pi]$.  We compute:
  \begin{equation}
    \notag
    \widehat{\phi}_1(\xi)=\frac{\sin(2\pi \xi)}{\xi}.
  \end{equation}
  In particular, $\widehat{\phi}_1 \geq 2$ on $[-C,C]$, for some fixed
  $C>0$. By density, we can find $\phi_2 \in C_0^{\infty}(\mathbb{R})$
  such that $\|\phi_2-\phi_1\|_{L^1} \leq 1$.  Then
  $\|\widehat{\phi}_2-\widehat{\phi}_1\|_{L^{\infty}} \leq
  \|\phi_2-\phi_1\|_{L^1} \leq 1$, and hence $\widehat{\phi}_2 \geq 1$
  on $[-C,C]$.  Let:
  \begin{equation}
    \notag
    \phi_3:=\phi_2 * \mathcal{F}^{-1} \big(\overline{\widehat{\phi}_2}\big).
  \end{equation}
  Here, $\mathcal{F}^{-1}$ denotes the inverse Fourier transform.
  Then $\phi_3 \in C_0^{\infty}(\mathbb{R})$ and
  $\widehat{\phi}_3=|\widehat{\phi}_2|^2$ is non-negative on
  $\mathbb{R}$ and greater than or equal to $1$ on $[-C,C]$.  We now
  choose $m>0$ sufficiently small such that: \begin{equation} \notag
    \supp\,\phi_3\Big(\frac{\cdot}{m\delta}\Big) \subseteq
    [-\delta,\delta].
  \end{equation}
  Let $\zeta:=\frac{1}{m\delta} \phi_3(\frac{\cdot}{m\delta})$. Then
  $\zeta \in C_0^{\infty}(\mathbb{R}), \,\supp\,\zeta \subseteq
  [-\delta,\delta]$ and:
  \begin{equation}
    \notag
    \widehat{\zeta}(\xi)=\widehat{\phi}_3(m\delta\xi).
  \end{equation}
  Hence, $\zeta \geq 0$ on $\mathbb{R}$ and $\widehat{\zeta} \geq 1$
  whenever $|\xi| \leq \frac{C}{m\delta}$. We then choose $\delta>0$
  sufficiently small so that $\frac{C}{m\delta} \geq 1$.
\end{proof}

\end{document}